\documentclass[reqno]{amsart}

\usepackage{amsmath,amssymb}
\usepackage{amsfonts, amscd, epsfig, amsmath, amssymb,enumerate,xfrac}
\usepackage{amstext}
\usepackage{graphicx}
\usepackage[usenames,dvipsnames,svgnames,table]{xcolor}
\newcommand{\A}{\mathcal{A}}
\usepackage[normalem]{ulem}

\usepackage{hyperref}

\usepackage{soul} 
\setstcolor{blue}

\usepackage[charter]{mathdesign}
\newcommand{\Ind}[1]{\textbf{1}{_{\lbrace{#1}\rbrace}}}    
\usepackage{cancel}
\usepackage{tikz}
\usetikzlibrary{backgrounds}
\numberwithin{equation}{section}
\usetikzlibrary{patterns,fadings,plotmarks}
\usetikzlibrary{arrows,decorations.pathmorphing}
\usetikzlibrary{arrows, shapes, positioning,decorations.markings}
\tikzstyle directed=[postaction={decorate,decoration={markings,
		mark=at position .7 with {\arrow{stealth}}}}]
\usetikzlibrary{calc}
\definecolor{light-gray}{gray}{0.95}
\usepackage{graphicx}
\usepackage{epsfig}
\usetikzlibrary{shapes}
\usepackage[active]{srcltx}
\newtheorem{theorem}{Theorem}[section]
\newtheorem{lemma}[theorem]{Lemma}
\newtheorem{proposition}[theorem]{Proposition}
\newtheorem{corollary}[theorem]{Corollary}
\newtheorem{remark}[theorem]{Remark}
\newtheorem{definition}{Definition}
\newcommand{\mc}[1]{{\mathcal #1}}

\newcommand{\bb}[1]{{\mathbb #1}}

\newcommand{\<}{\langle}
\renewcommand{\>}{\rangle}
\newcommand{\p}{\partial}
\newcommand{\pfrac}[2]{\genfrac{}{}{}{1}{#1}{#2}}

\newcommand{\at}[2]{\genfrac{}{}{0pt}{}{#1}{#2}}

\DeclareFontFamily{U}{BOONDOX-calo}{\skewchar\font=45 }
\DeclareFontShape{U}{BOONDOX-calo}{m}{n}{
  <-> s*[1.05] BOONDOX-r-calo}{}
\DeclareFontShape{U}{BOONDOX-calo}{b}{n}{
  <-> s*[1.05] BOONDOX-b-calo}{}
\DeclareMathAlphabet{\mathcalboondox}{U}{BOONDOX-calo}{m}{n}
\SetMathAlphabet{\mathcalboondox}{bold}{U}{BOONDOX-calo}{b}{n}
\DeclareMathAlphabet{\mathbcalboondox}{U}{BOONDOX-calo}{b}{n}

\usepackage{graphicx}        
\usepackage{multicol}        
\definecolor{columbiablue}{rgb}{0.61, 0.87, 1.0}

\usepackage{tikz}
\usetikzlibrary{arrows,decorations.pathmorphing,backgrounds,positioning,fit,petri}
\usetikzlibrary{shapes}

\newcommand{\tclock}[5]{
\begin{pgflowlevelscope}{\pgftransformscale{#4}}
\begin{scope}[shift={(#1,#2)}]
\shadedraw [inner color=#3!7!gray, outer color=#3!90!black, 
    line width=0.2pt] (0,0) circle (0.5cm);
\foreach \x in {6,12,...,360} {\draw[line width=0.2pt] (\x:0.40cm) -- (\x:0.45cm);}
\foreach \y in {30,60,...,360} {\draw[line width=0.2pt] (\y:0.35cm) -- (\y:0.45cm);}
{\pgfsetarrowsstart{to}
\draw[line width=0.4pt] (0:0.29cm) -- (0.02,0);
\draw[line width=0.4pt]  (90:0.32cm)--(0,0.02);}
\filldraw[fill=black] (-0.055,0.55) rectangle (0.055,0.6);
\filldraw[fill=black] (-0.015,0.51) rectangle (0.015,0.55);
\draw [line width=0.2pt](0,0.61) circle (0.11cm);
\draw [line width=0.2pt](0,0) circle (0.5cm);
\draw [line width=0.2pt](0,0) circle (0.02cm);
\draw [white,thick,domain=30:45] plot ({#5*0.6*cos(\x)}, {#5*0.6*sin(\x)});
\draw [white,thick,domain=20:55] plot ({#5*0.65*cos(\x)}, {#5*0.65*sin(\x)});
\draw [white,thick,domain=10:65] plot ({#5*0.7*cos(\x)}, {#5*0.7*sin(\x)});
\draw [white,thick,domain=135:150] plot ({#5*0.6*cos(\x)}, {#5*0.6*sin(\x)});
\draw [white,thick,domain=125:160] plot ({#5*0.65*cos(\x)}, {#5*0.65*sin(\x)});
\draw [white,thick,domain=170:115] plot ({#5*0.7*cos(\x)}, {#5*0.7*sin(\x)});
\end{scope}
\end{pgflowlevelscope}
}

\title[Non-equilibrium  and stationary fluctuations for the SSEP with  slow boundary]{Non-equilibrium and stationary  fluctuations\\ for the SSEP with slow boundary}

\author{P. Gon\c{c}alves}
\address{Center for Mathematical Analysis,  Geometry and Dynamical Systems,
Instituto Superior T\'ecnico, Universidade de Lisboa,
Av. Rovisco Pais, 1049-001 Lisboa, Portugal.}
\curraddr{}
\email{patricia.goncalves@math.tecnico.ulisboa.pt}
\thanks{}

\author{M. Jara}
\address{IMPA, Estrada Dona Castorina,  no. 110, Rio de Janeiro, RJ-Brazil }
\curraddr{}
\email{mjara@impa.br}
\thanks{}

\author{O. Menezes}
\address{Center for Mathematical Analysis,  Geometry and Dynamical Systems,
	Instituto Superior T\'ecnico, Universidade de Lisboa,
	Av. Rovisco Pais, 1049-001 Lisboa, Portugal.}
\email{otavio.menezes@tecnico.ulisboa.pt}
\thanks{}

\author{A. Neumann}
\address{UFRGS, Instituto de Matem\'atica e Estat\'istica, Campus do Vale, Av. Bento Gon\c calves, 9500. CEP 91509-900, Porto Alegre, Brasil}
\curraddr{}
\email{aneumann@mat.ufrgs.br}
\thanks{}


\begin{document}

\begin{abstract}
We derive the non-equilibrium   fluctuations of one-dimensional symmetric  simple exclusion processes in contact with slowed stochastic reservoirs which are regulated by a factor $n^{-\theta}$. Depending on the range of $\theta$ we obtain processes with various boundary conditions. Moreover, as a consequence of the previous result we deduce the non-equilibrium stationary fluctuations by using the matrix ansatz method which gives us  information on the stationary measure for the model. The main ingredient to prove these results is the derivation of precise bounds on the two point space-time  correlation function, which are a consequence of precise bounds on the transition probability of some underlying random walks. 
\end{abstract}

\maketitle

\section{Introduction}\label{s1}
The derivation of the non-equilibrium fluctuations around the hydrodynamical profile of general interacting particle systems is a very challenging problem in the field. The main difficulty is the lack of a well developed method which allows one to recover the form of the non-equilibrium space-time correlations of the microscopic model. In many models a uniform bound on the space-time correlations, showing that they vanish as the scaling parameter $n$ goes to infinity, is enough to recover the non-equilibrium fluctuations, but here we analyse a model for which this result is not sufficient and therefore extra work is needed in order to get sharper bounds on the aforementioned correlations.

 In this article we analyse  the symmetric simple exclusion process in contact with stochastic reservoirs and we obtain the non-equilibrium fluctuations when the reservoirs are slow.  The model can be defined as follows. We consider the symmetric simple exclusion process evolving in the discrete set $\Sigma_n=\{1,\cdots, n-1\}$, the bulk, and we superpose this dynamics with a Glauber dynamics at each endpoint of $\Sigma_n$. In the bulk, particles perform continuous time symmetric random walks, under the constraint that two particles cannot occupy the same site at any given time. At the endpoints of the bulk, namely at the sites $1$ and $n-1$,  particles can be created or annihilated at a certain rate, which is slower with respect to the jump rate in the bulk. Note that if we were looking at the symmetric simple exclusion process without the superposition of the Glauber dynamics, then the density of particles $\rho(t,u)$ would be a conserved quantity by the dynamics and it is well known that it evolves according to the heat equation $\partial_t \rho(t,u)=\Delta \rho(t,u)$. Adding the slowed Glauber dynamics at the end points of the bulk allows us to ask about the effects at the level of the partial differential equation and at the level of the fluctuations of the system around the profile $\rho(t,u)$.

To properly define our model, we chose rates of creation given by  $\alpha/n^\theta$ at the site $1$ and $\beta/n^\theta$ at the site $n-1$ and rates of annihilation $(1-\alpha)/n^\theta$ at the site $1$ and $(1-\beta)/n^\theta$ at the site $n-1$. For an illustration of the dynamics see Figure \ref{fig.1}. We observe that the role of the parameters $\alpha, \beta
\in(0,1)$ is to fix the density of the reservoirs, so that when $\beta>\alpha$, the difference of the density in the reservoirs creates a flux in the system. More precisely, if $1\sim\beta>\alpha\sim0$ there is a tendency for particles to get in the bulk from the right reservoir and leave the bulk from the left reservoir. 
The parameter $ \theta $ controls the intensity of the interaction between the reservoirs and the bulk.
We also observe that we could take more general rates of annihilation replacing $1-\alpha$ (resp. $1-\beta$) by $\gamma$ (resp. $\delta$), but the results would be exactly the same, only the notation would be heavier and for this reason we stick to this choice of the parameters. 
We note that a simple computation shows that for $\alpha=\beta=\rho$ the Bernoulli product measures $\nu_\rho$ are invariant under the dynamics, which is not the case when  $\alpha\neq \beta$. Nevertheless, in the latter case, by using the matrix ansatz method, the author  in \cite{de_paula}  obtained information on the stationary measure of the system and derived  explicit expressions for the empirical profile and the correlation function, see \eqref{eq:stat_sol} and \eqref{eq:corr_est_stat}.

The hydrodynamic limit for this model was analysed in \cite{bmns}. It is given by the heat equation, but depending on the range of the parameter $\theta$ three different types of boundary conditions appear: when $\theta\in[0,1)$ the density profile $\rho(t,u)$ satisfies Dirichlet boundary conditions, which means that the density profile is fixed as being $\alpha$ (resp. $\beta$) at $0$ (resp. $1$)
$$\rho(t,0)=\alpha \quad \textrm{and}\quad \rho(t,1)=\beta;$$ 
when $\theta=1$ the density profile satisfies a type of linear Robin boundary conditions: $$\partial_u \rho(t,0)=\rho(t,0)-\alpha \quad \textrm{and}\quad \partial_u\rho(t,1)=\beta-\rho(t,1)$$ and when $\theta>1$ the density profile satisfies Neumann boundary conditions $$\partial_u \rho(t,0)=\partial_u\rho(t,1)=0.$$ The hydrodynamic limit, in the case where the reservoirs are fast, was analysed in \cite{BGJO} for a more general exclusion dynamics, which includes the one described above.  There it is shown that, in the case $\theta<0$, the density profile has the same behavior as in the case $\theta\in[0,1)$.

The non-equilibrium fluctuations for this model have been analysed in \cite{FGN_Robin} when $\theta=1$  and in \cite{lmo} when $\theta=0$, and the equilibrium fluctuations have been analysed in \cite{FGNPSPDE} for any  value of $\theta\geq0$. In this paper we close the remaining cases, that is, we obtain the non-equilibrium fluctuations for any value of $\theta\geq 0$ and we only leave open  the case $\theta<0$, the fast case. As a consequence of our result, we also derive the non-equilibrium stationary fluctuations. 

Now we give a word about the proof. This is a natural continuation of the work developed in \cite{FGN_Robin} and for that reason we do not present all the details in the proofs and we refer the interested reader to \cite{FGN_Robin}.
The main difference with respect to \cite{FGN_Robin} is that in the microscopic equations satisfied by the density fluctuation field, there is a boundary term that vanishes identically if one chooses Robin boundary conditions for the test functions. Since in our situation the limiting dynamics has either Neumann or Dirichlet boundary conditions, one can not cancel this term by the choice of the test functions. Therefore, one needs a new argument. The idea is to obtain more refined correlation estimates at the boundary of the system. This turns out to be very demanding, as the proofs of  Proposition \ref{prop:corr_bound} and Lemma \ref{lem:conv_Dyn_mart} show. In particular, one needs to obtain precise estimates on the transition probabilities of some one-dimensional and two-dimensional random walks. These estimates have to be {\em uniform} in the behaviour of the walks at the boundary of the domains and, as a consequence,  they allow to obtain very precise bounds on the space-time correlation function near the boundary.

 Here follows an outline of the paper: in Section \ref{s2} we give the precise definition of the model and state the results. In Section \ref{s3} we prove that the density fluctuation fields converge to solutions of the Ornstein-Uhlenbeck equation \eqref{O.U.} assuming tightness, and in Section \ref{s4} we give the proof of the key result in order to close the equations for the density fluctuation field. Section \ref{sec:tightness} is devoted to the proof of tightness and Section \ref{s6} concerns the proof of the precise estimate on the  correlation functions.

\section{Statement of results}\label{s2}

\subsection{The model}
For $n\geq{1}$, we denote by $\Sigma_n$ the set $\{1,\cdots,n-1\}$. The symmetric simple exclusion process with slow boundary is a Markov process $\{\eta_t:\,t\geq{0}\}$ with configuration space $\Omega_n:=\{0,1\}^{\Sigma_n}$. If $\eta$ denotes a configuration of the state space $\Omega_n$, then $\eta(x)=0$ means that the site $x$ is vacant while $\eta(x)=1$ means that the site $x$ is occupied.  This Markov process can be characterized in terms of its infinitesimal generator $\mc L_{n}$, which we define as follows. Fix the  parameters $\theta\geq 0$ and $\alpha, \beta \in(0,1)$. Given a  function $f:\Omega_n\rightarrow \bb{R}$,
\begin{equation}\label{ln}
\begin{split}
(\mc L_{n}f)(\eta)=
& \Big[\frac{\alpha}{n^\theta}(1-\eta(1))+\frac{(1-\alpha)}{n^\theta}\eta(1)\Big]\Big(f( \eta^1)-f(\eta)\Big)\\
+& \Big[\frac{\beta}{n^\theta}(1-\eta(n-1))+\frac{(1-\beta)}{n^\theta}\eta(n-1)\Big]\Big(f(\eta^{n-1})-f(\eta)\Big)\\
+& \sum_{x=1}^{n-2}\Big(f(\eta^{x,x+1})-f(\eta)\Big) \,,
\end{split}
\end{equation}
where $\eta^{x,x+1}$ is the configuration obtained from $\eta$ by exchanging the occupation variables $\eta(x)$ and $\eta(x+1)$:
\begin{equation*}
(\eta^{x,x+1})(y)=\left\{\begin{array}{cl}
\eta(x+1),& \mbox{if}\,\,\, y=x\,,\\
\eta(x),& \mbox{if} \,\,\,y=x+1\,,\\
\eta(y),& \mbox{otherwise}\,,
\end{array}
\right.
\end{equation*}
 and  for $x\in\{1,n-1\}$, the configuration $\eta^x$, is obtained from $\eta$ by flipping  the occupation  variable $\eta(x)$:
 \begin{equation*}
(\eta^x)(y)=\left\{\begin{array}{cl}
1-\eta(y),& \mbox{if}\,\,\, y=x\,,\\
\eta(y),& \mbox{otherwise.}
\end{array}
\right.
\end{equation*}
The dynamics of this model can be described as follows. In the bulk, particles move according to continuous time random walks, but whenever a particle wants to jump to an occupied site, the jump is suppressed. At the left boundary, particles can be created (resp. removed) at rate $\alpha n^{-\theta}$ (resp. $(1-\alpha) n^{-\theta}$). At the right boundary, particles can be created (resp. removed) at rate $\beta n^{-\theta}$ (resp. $(1-\beta) n^{-\theta}$). We consider the process speeded up in the diffusive time scale $n^2$ so that its generator is given by $n^2\mc L_n$.
Let $\mc D([0,T], \Omega_n)$ be the space of trajectories which are right continuous and with left limits, and taking values in $\Omega_n$. For a measure $\mu_n$ in $\Omega_n$, let $\bb P_{\mu_n}$ be the measure in $ \mc D([0, T], \Omega_n)$
induced by the Markov process with generator $n^2\mc L_n$ and the initial measure $\mu_n$ and
denote by $\bb E_{\mu_n}$ the expectation with respect to $\bb P_{\mu_n}$.
 \begin{figure}
 \begin{center}

 \begin{tikzpicture}[thick, scale=0.85][h!]
 \draw[latex-] (-6.5,0) -- (6.5,0) ;
\draw[-latex] (-6.5,0) -- (6.5,0) ;
\foreach \x in  {-6,-5,-4,-3,-2,-1,0,1,2,3,4,5,6}
\draw[shift={(\x,0)},color=black] (0pt,0pt) -- (0pt,-3pt) node[below] 
{};
 \node[ball color=black!30!, shape=circle, minimum size=0.7cm] (A) at (1,0.4) {};

       \node[ball color=pink, shape=circle, minimum size=0.7cm]  at (-6.,1.2) {};
       \node[ball color=pink, shape=circle, minimum size=0.7cm]  at (-6.,0.4) {};
        \node[shape=circle,minimum size=0.7cm] (Q) at (-6.,2) {};
        \node[shape=circle,minimum size=0.7cm] (P) at (-5.,2) {};
        \node[shape=circle,minimum size=0.7cm] (M) at (-6.,1.2) {};
        \node[shape=circle,minimum size=0.7cm] (N) at (-5.,1.2) {};
    \node[ball color=black!30!, shape=circle, minimum size=0.7cm] (C) at (0,0.4) {};

   \node[ball color=black!30!, shape=circle, minimum size=0.7cm] (D) at (-2.,0.4) {};
      \node[ball color=black!30!, shape=circle, minimum size=0.7cm] (L) at (3.,0.4) {};

        \node[shape=circle,minimum size=0.7cm] (Q) at (-6.,2.0) {};
           
           \node[ball color=pink, shape=circle, minimum size=0.7cm] (R) at (6.,0.4) {};
            \node[ball color=pink, shape=circle, minimum size=0.7cm] (S) at (6.,1.2) {};
              \node[ball color=pink, shape=circle, minimum size=0.7cm] (T) at (6.,2.0) {};
               \node[shape=circle,minimum size=0.7cm] (U) at (5,2.0) {};
                \node[shape=circle,minimum size=0.7cm] (V) at (6,3.0) {};
               
                  \node[shape=circle,minimum size=0.7cm] (W) at (5,3.0) {};
               
              
              \path [->] (T) edge[bend right =60] node[above] {$\frac{\beta}{n^\theta}$} (U);
              
                \path [->] (W) edge[bend left=60] node[above] {$\frac{1-\beta}{n^\theta}$} (V);
              
               \path [->] (P) edge[bend right=60] node[above] {${\frac{1-\alpha}{n^\theta}}$} (Q);
               \path [->] (M) edge[bend left=60] node[above] {$\frac{\alpha}{n^\theta}$} (N);
              
    \node[shape=circle,minimum size=0.7cm] (K) at (0,0.4) {};
        \node[shape=circle,minimum size=0.7cm] (G) at (2,0.4) {};
    \node[shape=circle,minimum size=0.7cm] (E) at (-1.,0.5) {};
\path [->] (A) edge[bend right=60,draw=black] node[] {$\times$} (K);
\path [->] (A) edge[bend left=60] node[above] {$1$} (G);
\path [->] (D) edge[bend left=60] node[above] {$1$} (E);


\tclock{-6.9}{-0.8}{gray}{0.8}{0}

\tclock{-5.65}{-0.8}{pink}{0.8}{0}

\tclock{-4.4}{-0.8}{pink}{0.8}{0}

\tclock{-3.1}{-0.8}{pink}{0.8}{0}

\tclock{-1.8}{-0.8}{pink}{0.8}{0}

\tclock{-0.6}{-0.8}{pink}{0.8}{0}

\tclock{0.6}{-0.8}{pink}{0.8}{0}

\tclock{1.85}{-0.8}{pink}{0.8}{0}

\tclock{3.15}{-0.8}{pink}{0.8}{0}

\tclock{4.4}{-0.8}{pink}{0.8}{0}

\tclock{5.65}{-0.8}{pink}{0.8}{0}

\tclock{6.9}{-0.8}{gray}{0.8}{0}
\end{tikzpicture}
\bigskip
\caption{The symmetric simple exclusion with slow boundary.}\label{fig.1}
\end{center}
\end{figure}
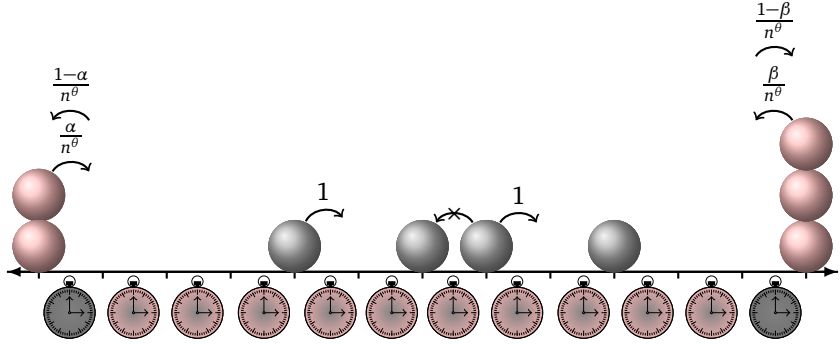

\subsection{Empirical profile and  correlations}

For a measure $\mu_n$ in $\Omega_n$ and for each $x\in\Sigma_n$, we denote by $\rho_t^n(x)$ the empirical profile at the site $x$, given by 
\begin{equation}\label{eq:rho_t}
\rho^n_t(x)\;=\;\mathbb{E}_{\mu_n}[\eta_{tn^2}(x)],
\end{equation}
and at the boundary we set
 $\rho^n_t(0)=\alpha$ and $\rho^n_t(n)=\beta$ for all $t\geq 0$.
A simple computation shows that $\rho_t^n(\cdot)$ is a solution of 
\begin{equation}\label{eq:disc_heat}
\left\{
\begin{array}{ll}
 \partial_t \rho_t^n(x) \;= \; \big(n^2\mathfrak B^\theta_n \rho_t^n\big)(x)\,, \;\; x\in\Sigma_n\,,\;\;t \geq 0\,,\\
 \rho_t^n(0)\;=\alpha\,,  \rho^n_t(n)\;=\;\beta\,,\;\;t \geq 0\,,
\end{array}
\right.
\end{equation}
 where the operator $\mathfrak B_n^\theta$ that acts on functions $f:\overline{\Sigma}_n\to\bb R$ as
\begin{equation}\label{eq:operator_B_n_theta}
(\mathfrak B_n^\theta f)(x)\;=\;\sum_{y=0}^n\xi_{x,y}^{n,\theta}\big(f(y)-f(x)\big)\,, ~~\textrm{ for } x\in \Sigma_n,
\end{equation}
and it is the  infinitesimal generator of the random walk (RW) $\{\mathfrak X^\theta_t,\;t\geq 0\}$ on $\overline{\Sigma}_n$ which is absorbed at the boundary  of $\overline{\Sigma}_n$, that is, at the points $\{0,n\}$. 
Above, $\overline{\Sigma}_n=\Sigma_n\cup \{0,n\}$ and
\begin{equation*}
\xi_{x,y}^{n,\theta}\;=\; \begin{cases}
1\,, & \textrm{ if } \; |y-x|=1 \textrm{ and }x,y\in \Sigma_n\,,\\
n^{-\theta}\,, & \textrm{ if }\; x=1,y=0\textrm{ and } x=n-1\,, y=n\,,\\ 
0\,,& \textrm{ otherwise.}
\end{cases} 
\end{equation*}
The stationary solution of \eqref{eq:disc_heat} is given by
\begin{equation}\label{eq:stat_sol}\rho^n_{ss}(x)=\bb E_{\mu_{ss}}[\eta_{tn^2}(x)]=a_nx+b_n\,,
\end{equation}
where  $$a_n =\tfrac{\beta-\alpha}{2n^\theta+(n-2)} \quad \textrm{and }\quad b_n=a_n({n^\theta}-1)+\alpha.$$
A simple computation shows that 
 \begin{equation}\label{eq:conve_emp_prof_stat_prof}
\lim_{n\to\infty}\max_{x\in\Sigma_n}\big|\rho^n_{ss}(x)-\bar{\rho}(\tfrac xn)\big|=0,
\end{equation}
where  for $u\in(0,1)$
\begin{equation}\label{eq:hyd_stat_sol}
\bar{\rho}(u)=\left\{
\begin{array}{ll}
(\beta-\alpha)u+\alpha\,;\,\theta<1,\\
\frac{\beta-\alpha}{3}u+\alpha+\frac{\beta-\alpha}{3}\,;\,\theta=1,\\
\frac{\beta+\alpha}{2}\,;\,\theta>1.
\end{array}
\right.
\end{equation}
Now we define the two-point correlation function. 
Let 
\begin{equation}\label{V}
V_n= \{ (x,y)\in \{ 0,\cdots,n\}^{2}: 0<x<y<n \},
\end{equation}
 and its boundary $\partial V_n = \{ (x,y)\in \{ 0,\cdots,n \}^{2}: x=0 \,\, \textrm{or} \,\, y=n \}$.
 
\begin{figure}[!htb]
\centering
\begin{tikzpicture}[thick, scale=0.9]
\draw[->] (-0.5,0)--(5.5,0) node[anchor=north]{$x$};
\draw[->] (0,-0.5)--(0,5.5) node[anchor=east]{$y$};
\begin{scope}[scale=0.75]
\foreach \x in {1,...,4} 
	\foreach \y in {\x,...,4}
		\shade[ball color=black!30!](\x,1+\y) circle (0.3);
		
\foreach \x in {0,...,5} 
	\shade[ball color=pink](\x,6) circle (0.3); 
	 
\foreach \y in {1,...,5} 
	\shade[ball color=pink](0,\y) circle (0.3);  
\end{scope}	
\draw (0,0) node[anchor=north east] {$0$};
\draw (0.75,2pt)--(0.75,-2pt) node[anchor=north] {$1$};
\draw (1.5,2pt)--(1.5,-2pt) node[anchor=north]{$2$};
\draw (2.25,2pt)--(2.25,-2pt);
\draw (3,2pt)--(3,-2pt);
\draw (3.75,2pt)--(3.75,-2pt) node[anchor=north]{\small $n-1$};
\draw (-0.05,.75) node[anchor=east] {$1$};
\draw (-0.05,1.5) node[anchor=east]{$2$};
\draw (-0.05,3.75) node[anchor= east]{$n-1$};
\draw (-0.05,4.5) node[anchor=east]{$n$};
\end{tikzpicture}
\caption{The set $V_n$ and its boundary $\partial V_n$.}
\end{figure}
For $(x,y)\in V_n$, let $\varphi^n_{t}(x,y)$ denote the two-point correlation function between the occupation sites at $x<y $, which is defined by 
\begin{equation}\label{eq:corr_function}
\varphi_t^n(x,y)=\mathbb{E}_{\mu_{n}}[(\eta_{tn^2}(x)-\rho_{t}^n(x))(\eta_{tn^2}(y)-\rho_{t}^n(y))].
\end{equation}
Doing  simple, but long, computations we see that $\varphi^n_{t}$ is a solution of
\begin{equation}\label{eq:disc_eq_corr}
\begin{cases}
\partial_t \varphi_t^n(x,y)=n^2 \mc A_n^\theta \varphi^n_t(x,y) +g_t^n(x,y), & \textrm{ for } (x,y)\in V_n,\; t>0,\\
\varphi_t^n(x,y)=0, & \textrm{ for } (x,y)\in \partial  V_n, \;t>0,\\
\varphi_0^n(x,y)=\bb E_{\mu_n}[\eta_0(x)\eta_0(y)]-\rho_0^n(x)\rho_0^n(y), & \textrm{ for } (x,y)\in V_n\cup \partial V_n,\\
\end{cases}
\end{equation}
where $\mc A^\theta_n$ is the operator  that acts on functions $f:V_n\cup \partial V_n \to\bb R$ as
 \begin{equation}\label{op_A}
 (\mc A^\theta_nf)(u)=\sum_{v\in V_n} c^\theta_n(u,v)\big[ f(v)-f(u)\big],
 \end{equation}
and it is the infinitesimal generator of the RW $\{\mc X_{tn^2}^\theta; \,t\geq 0\}$ in $V_n\cup \partial V_n$ with jump rates given by $c_n^\theta(u,v)$ and which is absorbed at $\p V_n$. Above, 
 \begin{equation*}
c^\theta_n(u,v)\;=\; \begin{cases}
 1, & \textrm{ if } \; \Vert u-v\Vert =1 \textrm{ and } \;u, v\in  V_n,\\
 n^{-\theta}, & \textrm{ if }\;  \Vert u-v\Vert=1\textrm{ and } u\in V_n,\; v\in \partial V_n,\\ 
 0,& \textrm{ otherwise, }
 \end{cases}
 \end{equation*}
 for $\theta\geq 0$ and   $\Vert \cdot \Vert$ denotes the supremum norm; 
\begin{equation}\label{g}
g^n_{t}(x,y) = -(\nabla_{n}^{+}\rho_{t}^{n}(x))^{2} \delta_{y=x+1}\end{equation} and  $\nabla_{n}^{+}\rho_{t}^{n}(x)= n(\rho_{t}^{n}(x+1)-\rho_{t}^{n}(x)).$
Now we impose some conditions on the initial measures. 
We fix an initial profile $\rho_0:[0,1]\to [0,1]$ which is measurable and of class $C^6$, and we  assume that 
\begin{equation}
\label{assumption 1}
\max_{x\in\Sigma_n}|\rho_0^n(x)-\rho_0(\tfrac xn)|\lesssim \tfrac 1n.
\end{equation}
We observe that the assumption on the regularity of $\rho_0$ is necessary in the proof of Lemma \ref{lem:discrete_gradient}, in order to approximate $\rho_t^n(\cdot)$ by a suitable sequence of functions of class $C^4$.
Above (and in what follows) we write $\psi(x) \lesssim \varphi(x)$ if there exists a constant $C$ independent of $x$ such that $\psi(x) \le C \varphi(x)$ for every $x$.
 Moreover, we also assume that
\begin{equation}\label{eq:ass_ini_corr_1}
\max_{y\in\Sigma_n}|\varphi_0^n(x,y)|\lesssim
\begin{cases}
\tfrac{n^\theta}{n^2}, \; \theta\leq 1,\\
\tfrac {1}{n}, \; \theta\geq 1,
\end{cases}\quad \mbox{for } x=1,n-1,
\end{equation} and that
\begin{equation}\label{eq:ass_ini_corr}
\max_{(x,y)\in V_n}|\varphi_0^n(x,y)|\lesssim\tfrac {1}{n}.
\end{equation} 
\begin{proposition}\label{prop:corr_bound}
Assume \eqref{eq:ass_ini_corr_1} and \eqref{eq:ass_ini_corr}. It holds that
\begin{equation}\label{eq181}
\sup_{t\geq 0}\max_{y\in \Sigma_n}|\varphi_t^n(x,y)|\lesssim
\begin{cases}
\frac{n^\theta}{n^2}, \; \theta\leq1,\\
\frac {1}{n}, \; \theta\geq 1,
\end{cases}\quad \mbox{for } x=1,n-1\,,
\end{equation}
and 
\begin{equation}\label{eq1812}
\sup_{t\geq 0}\max_{(x,y)\in V_n}|\varphi_t^n(x,y)|\lesssim\tfrac{1}{n}.
\end{equation}
\end{proposition}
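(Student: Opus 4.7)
The plan is to convert \eqref{eq:disc_eq_corr} into its Duhamel representation and reduce both bounds to sharp transition-probability estimates for the absorbed 2D random walk generated by $n^2\mc A_n^\theta$. Writing $\hat{\mc X}_t:=\mc X^\theta_{tn^2}$ for the diffusively speeded-up walk and $\hat\tau$ for its absorption time at $\partial V_n$, Duhamel's formula gives
\begin{equation*}
\varphi_t^n(u) \;=\; \bb E_u\bigl[\varphi_0^n(\hat{\mc X}_t)\mathbf{1}_{\hat\tau>t}\bigr] \;+\; \int_0^t \bb E_u\bigl[g_s^n(\hat{\mc X}_{t-s})\mathbf{1}_{\hat\tau>t-s}\bigr]\,ds.
\end{equation*}
The initial-data term is at most $\max_v|\varphi_0^n(v)|$, which by \eqref{eq:ass_ini_corr} is $\lesssim 1/n$ globally and by \eqref{eq:ass_ini_corr_1} is $\lesssim n^\theta/n^2$ on the boundary layer $\{x=1\}\cup\{y=n-1\}$ when $\theta\le 1$. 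The real work lies in the source term, whose integrand $g_s^n$ is supported on the diagonal $\mc D=\{(x,x+1):1\le x\le n-2\}$.

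A standard discrete maximum-principle argument for \eqref{eq:disc_heat}, together with the $C^6$ regularity of $\rho_0$ from \eqref{assumption 1}, shows $|\nabla_n^+\rho_s^n(x)|\lesssim 1$ uniformly in $s\ge 0$ and $x\in\{1,\ldots,n-2\}$, so that $|g_s^n|\lesssim \mathbf{1}_{\mc D}$. The source contribution to $\varphi_t^n(u)$ is then bounded by a constant multiple of the expected occupation time of $\mc D$ by $\hat{\mc X}$ before absorption,
\begin{equation*}
\bb E_u\Bigl[\int_0^{t\wedge\hat\tau}\mathbf{1}_{\hat{\mc X}_r\in\mc D}\,dr\Bigr].
\end{equation*}
The global bound \eqref{eq1812} reduces to showing this quantity is $\lesssim 1/n$. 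For this I would view $\mc X^\theta$ as a pair of 1D random walks on $\overline\Sigma_n$, each of rate $1$ in the bulk and slow rate $n^{-\theta}$ at the absorbing endpoints, subject to a non-collision constraint that disables the jumps exiting $V_n$ across the diagonal $\{x=y\}$. Sharp pointwise estimates on the 2D transition kernel $p_r^n(u,v)$ derived from the 1D constituents, together with the fact that the walk makes of order $n$ visits to any fixed line of $V_n$ before absorption, then give the $\lesssim 1/n$ bound after integrating in time and summing over the $n$ diagonal points.

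For the refined boundary bound \eqref{eq181}, I would set $u=(1,y)$; the case $u=(x,n-1)$ is symmetric. Beyond the initial-data contribution, the refinement comes from a more delicate analysis of the source term tailored to the boundary layer: the first coordinate of $\hat{\mc X}$ exchanges with the absorbing point $\{0\}$ only at the slow rate $n^{-\theta}$, and exploiting this asymmetry in the occupation-time estimate produces the improved factor $n^{\theta-1}$ relative to the bulk case, i.e., exactly $n^\theta/n^2$ for $\theta\le 1$. For $\theta\ge 1$ the refinement collapses onto the global $1/n$, consistent with the statement.

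The hard part will be establishing the transition-probability estimates \emph{uniformly in the starting point up to the boundary layer and uniformly in $\theta\ge 0$}. Near the slow reservoirs the usual diffusive heat-kernel bounds are not sharp enough, and sharp pointwise estimates on the 1D and 2D kernels with slow Robin-type absorption are needed; this is precisely the technical content reserved for Section \ref{s6}.
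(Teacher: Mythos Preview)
Your high-level architecture (Duhamel for \eqref{eq:disc_eq_corr}, bound the source by the diagonal occupation time, and control $S_n=\sup_{s,z}|g_s^n(z,z+1)|$ via a gradient estimate on $\rho_s^n$) matches the paper. The genuine gap is in the occupation-time step when $\theta>1$. Your claim that the absorbed walk ``makes of order $n$ visits to any fixed line of $V_n$ before absorption'' is false once the boundary rate $n^{-\theta}$ is small: the coupling with the $\theta=0$ walk (which the paper carries out) gives
\[
\mc E_{(x,y)}\Big[\int_0^\infty \mathbf 1_{\{\mc X^\theta_s\in\mc D\}}\,ds\Big]\;\le\; x\tfrac{n-y}{n-1}+n^\theta,
\]
so after the $n^{-2}$ time rescaling the diagonal occupation time is of order $n^{\theta-2}$, not $1/n$, for $\theta>1$. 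No amount of ``sharp pointwise heat-kernel estimates'' for the absorbed walk can repair this, because the quantity you are trying to bound is genuinely large. The paper's fix is to abandon the absorbed walk in this regime and rewrite \eqref{eq:disc_eq_corr} with the \emph{reflected} 2D walk plus a nonpositive Feynman--Kac potential $\mc V=-n^{2-\theta}(\mathbf 1_{x=1}+\mathbf 1_{y=n-1})$; dropping $e^{\int\mc V}\le 1$ reduces matters to the reflected-walk diagonal occupation time, which a Lyapunov/Dynkin argument with $f(x,y)=-(x-\tfrac12)^2-(y-n+\tfrac12)^2$ shows is $\lesssim 1/n$. Without this reflected-walk idea your plan does not close for $\theta>1$.

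Two smaller points. First, calling the bound $|\nabla_n^+\rho_s^n|\lesssim 1$ a ``standard maximum-principle argument'' undersells it: the paper obtains it by comparing $\rho_t^n$ with a solution of the continuous heat equation with Robin parameter $\mu_n$ tuned to the slow boundary (this is where the $C^6$ hypothesis on $\rho_0$ enters, to get a $C^{1,4}$ comparison profile), and for $\theta>1$ the same reflected-walk/Feynman--Kac trick is needed again at the 1D level. Second, your treatment of the initial-data term for the refined bound \eqref{eq181} is not right as written: $\bb E_{(1,y)}[\varphi_0^n(\hat{\mc X}_t)]$ involves $\varphi_0^n$ evaluated at $\hat{\mc X}_t$, which need not lie on the boundary layer, so you cannot invoke \eqref{eq:ass_ini_corr_1} after bounding by $\max_v|\varphi_0^n(v)|$; that max is only $\lesssim 1/n$ via \eqref{eq:ass_ini_corr}.
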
The proof of this proposition is presented in  Section \ref{prova_corr_bound}. 
 In this case, contrarily to the empirical profile, it is quite complicate to obtain an expression for the stationary solution of \eqref{eq:disc_eq_corr}. Nevertheless, we  note that a simple, but long, computation shows that the solution, in the case where the starting measure is the stationary state $\mu_{ss}$, is given by
 \begin{equation}\label{eq:corr_est_stat}
\begin{split}
\varphi_{ss}^n(x,y)=-\frac{(\alpha-\beta)^{2}(x+n^{\theta}-1)(n-y+n^{\theta}-1)}{(2n^{\theta}+n-2)^{2}(2n^{\theta}+n-3)}.
\end{split}
\end{equation}
From the previous identity it follows that
\begin{equation*}\label{eq:bound_stat_corr}
\max_{y\in\Sigma_n}|\varphi_{ss}^n(x,y)| \lesssim
\begin{cases}
\frac{n^\theta}{n^2}, \; \theta\leq 1,\\
\frac {1}{n^\theta}, \; \theta\geq 1,
\end{cases}\quad \mbox{for } x=1,n-1\,,
\end{equation*}
and that
\begin{equation*}\label{eq181_stat}
\max_{(x,y)\in V_n}|\varphi_{ss}^n(x,y)|\lesssim\tfrac{1}{n\vee n^\theta}.
\end{equation*}

\subsection{Stationary measures}

For $\rho\in(0,1)$, let $\nu^n_\rho$ be the Bernoulli product measure in $\Omega_n$ with density $\rho$, that is 
\begin{equation}\label{eq:bernoulli_mea}
\nu^n_\rho\{\eta: \eta(x)=1\}=\rho.
\end{equation}
Under this measure the occupation variables $\{\eta(x)\}_{x\in\Sigma_n}$ are independent and for each $x\in\Sigma_n$ the random variable $\eta(x)$ has Bernoulli distribution of parameter $\rho$.  For  $\alpha=\beta=\rho$  these measures are reversible and, in particular, they are invariant. Nevertheless, when $\alpha\neq \beta$,  since we deal with a finite-state irreducible Markov chain, then there exists a unique stationary measure that we denote by $\mu^n_{ss}.$ A way to get information about this measure is to use the matrix ansatz method introduced in \cite{Derrida1, Derrida2}.  
For that purpose, for a configuration $\eta:=(\eta(1),\cdots,\eta(n-1))$, let  $f_{n-1}(\eta)$ denote the weight of that configuration with respect to the stationary measure $\mu^n_{ss}$ and let us suppose that 
\begin{equation*}\label{eq:matrix_function}
f_{n-1}(\eta) = \textbf{w}^{T} \prod_{x=1}^{n-1}\Big(\eta(x)D+(1-\eta(x))E\Big)\textbf{v},
\end{equation*}
where  $D,E$ are matrices (which, in general, do not commute) and the vectors $\textbf{w}^{T},\textbf{v}$ are present in order to convert the matrix product into a scalar. 
Let $P^n(\eta)$ be the normalized weight of the configuration $\eta$ with respect to the stationary state $\mu^n_{ss}$, which is given by
\begin{equation*}\label{eq:m_a_stat}
P^n(\eta) = \frac{f_{n-1}(\eta)}{Z_{n-1}},
\end{equation*}
where $Z_{n-1}$ is the sum of the weights of the $2^{n-1}$ possible configurations in  $\Omega_n$ which is equal to
$
Z_{n-1} = \textbf{w}^{T}(D+E)^{n-1}\textbf{v}.
$
From the computations of \cite{de_paula}, the matrices $D,E$ and the vectors $\textbf{w}^{T}, \textbf{v}$ satisfy the  following relations:
\begin{equation}\label{eq:algSLOW}
\begin{split}
&DE-ED=D+E:=C,\\
&\textbf{w}^{T}\left[ \tfrac{\alpha}{2n^{\theta}}E-\tfrac{(1-\alpha)}{2n^{\theta}}D \right] = \textbf{w}^{T}, \\ 
&\left[ \tfrac{(1-\beta)}{2n^{\theta}}D - \tfrac{ \beta}{2n^{\theta}}E \right]\textbf{v} = \textbf{v}
\end{split}
\end{equation}
and from this, we can conclude that  \begin{equation*}
Z_{n-1} = \frac{1}{(\alpha-\beta)^{n-1}}\frac{\Gamma(2n^{\theta}+n-1)}{\Gamma(2n^{\theta})},
\end{equation*}
where  $\Gamma(\cdot)$ denotes the Gamma function.
From the previous  information we can get the explicit expressions for the empirical profile, $\rho^n_{ss}(x)$, and the two-point correlation function $\varphi^n_{ss}(x,y)$, these expressions are given in
\eqref{eq:stat_sol}
and in \eqref{eq:corr_est_stat}, respectively.
We refer the interested reader to \cite{de_paula} for more details on how to derive these identities.

\subsection{Hydrodynamic limit}
In \cite{bmns} it was  established the hydrodynamic limit for this model for any  $\theta\geq 0$ and in \cite{BGJO,G_lec_not} it was extended to the case $\theta<0$. For completeness we recall those  results now. 

\begin{definition}\label{def:mea_asso} 

Let $g:[0,1]\to[0,1]$ be a measurable profile.
A sequence $\{\mu_n\}_{n\in \bb N}$ is said to be \textit{associated} to $g(\cdot)$  if,  for any $ \delta >0 $ and any continuous function $ f: [0,1]\to\bb R $ the following limit holds:
\begin{equation*}
\lim_{n\to\infty}
\mu_{n} \Bigg(\, \eta:\, \Big| \frac {1}{n} \sum_{x = 1}^{n\!-\!1} f(\pfrac{x}{n})\, \eta(x)
- \int_0^1 f(u)\,g(u)\, du \Big| > \delta\,\Bigg)=0\,.
\end{equation*}
\end{definition}

\begin{theorem}[Hydrodynamic Limit, \cite{bmns,BGJO,G_lec_not}]\label{theo:HL}
\quad

 Suppose that the 
sequence $\{\mu_n\}_{n\in \bb N}$ is \textit{associated} to a measurable  profile $\rho_0(\cdot)$ in the sense of Definition \ref{def:mea_asso}. 
Then,  for each $ t \in [0,T] $, for any $ \delta >0 $ and any continuous function $ f:[0,1]\to\bb R $, 
\begin{equation*}
\lim_{ n \rightarrow +\infty }
\bb P_{\mu_{n}}\Bigg[\eta_{\cdot} : \Big\vert \frac{1}{n} \sum_{x =1}^{n-1}
f(\pfrac{x}{n})\, \eta_{tn^2}(x) - \int_0^1 f(u)\, \rho(t,u)\, du\, \Big\vert
> \delta \Bigg] \;=\; 0\,,
\end{equation*}
 where  $\rho(t,\cdot)$ is the unique weak solution of the 
 heat equation 
 
 \begin{equation}\label{heat_eq}
\begin{cases}
\p_t \rho(t,u)= \p_u^2 \rho(t,u)\,, & \textrm{for } t>0\,,\, u\in (0,1)\,,\\
\rho(0,u)=\rho_0(u)\,,& u\in [0,1]\,,
\end{cases}
\end{equation}
with the following boundary conditions:
  \begin{enumerate}
  \item For  
$ \theta<1$,  $\rho(t,0) = \alpha$ and  $ \rho(t,1) = \beta$,  for  $t>0$; \label{Dirichlet}
\item For  $\theta=1$, $\p_u \rho(t,0) = \rho(t,0)-\alpha$ and $\p_u \rho(t,1) = \beta-\rho(t,1)$,  for $t>0$;\label{Robin}\item
For $\theta>1$, $\p_u \rho(t,0) = \p_u \rho(t,1) =0$, for $t>0$.\label{Neumann} 
 \end{enumerate}
\end{theorem}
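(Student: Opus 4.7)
The plan is to follow the standard martingale/entropy method for hydrodynamic limits of exclusion processes, adapted to the slow boundary. I would introduce the empirical measure $\pi^n_t(du) = \frac{1}{n}\sum_{x=1}^{n-1}\eta_{tn^2}(x)\delta_{x/n}(du)$ and, for a smooth test function $f\in C^2([0,1])$, apply Dynkin's formula to $\langle \pi^n_t, f\rangle$. Using \eqref{ln} and a discrete summation by parts, the bulk contribution of $n^2\mathcal L_n$ collapses to $\frac{1}{n}\sum_{x=1}^{n-1}(\Delta_n f)(x/n)\,\eta_{tn^2}(x)$, which converges to $\int_0^1 \Delta f(u)\rho(t,u)\,du$. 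What survives at the edges is, up to lower order, the bulk-flux term $\partial_u f(0)\,n\,\eta_{tn^2}(1)$ plus the Glauber contribution $n^{2-\theta}f(0)(\alpha-\eta_{tn^2}(1))$, together with the symmetric pieces at $x=n-1$. The martingale part has quadratic variation of order $1/n$ and vanishes in $L^2$.

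The treatment of the boundary terms dictates which class of test functions is chosen, and selects the limiting boundary condition. For $\theta<1$, I would pick $f$ with $f(0)=f(1)=0$, so that only the flux term $\partial_u f(0)\,n\,\eta_{tn^2}(1)$ remains; a Dirichlet-form replacement argument based on the slow boundary generator, using that the rate $n^{2-\theta}$ still diverges, allows one to replace $\eta_{sn^2}(1)$ by $\alpha$ in time-averaged $L^2$ and yields the Dirichlet boundary condition. For $\theta=1$, one takes $f$ satisfying the Robin conditions $\partial_u f(0)=f(0)$ and $\partial_u f(1)=-f(1)$; the two boundary terms then combine algebraically (the factor $n^{2-\theta}=n$ matches the prefactor in the flux term), producing exactly the Robin integral $f(0)(\rho(t,0)-\alpha)+f(1)(\beta-\rho(t,1))$ of the weak formulation. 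For $\theta>1$, the Glauber term is subdiffusive and vanishes, so it is enough to pick $f$ with $\partial_u f(0)=\partial_u f(1)=0$ to kill the surviving flux, yielding the Neumann condition.

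The remaining steps are standard: tightness of $\{\pi^n\}$ in the Skorokhod space $\mathcal D([0,T],\mathcal M_+([0,1]))$ follows from Aldous' criterion and the martingale bound, and any limit point is concentrated on absolutely continuous measures $\rho(t,u)\,du$ thanks to the a priori bound $\pi^n_t([0,1])\le 1$ and a standard argument using a dense countable family of test functions. Passing to the limit in the martingale identity for the class of test functions appropriate to each regime identifies the limit as a weak solution of \eqref{heat_eq} with the prescribed boundary conditions; uniqueness of weak solutions (classical in all three regimes) then upgrades convergence in distribution to convergence in probability and closes the argument.

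The main obstacle is the boundary replacement lemma in the regime $\theta<1$, especially for $\theta$ close to $1$ where the boundary rate $n^{2-\theta}$ is only marginally faster than diffusive. One needs a uniform-in-$n$ control of $\mathbb E_{\mu_n}\!\int_0^t(\eta_{sn^2}(1)-\alpha)^2\,ds$, which requires carefully exploiting the entropy production at the slow boundary and is exactly the type of estimate that is supplied, at the level of two-point correlations, by bounds of the form stated in Proposition \ref{prop:corr_bound}. The remaining estimates (discrete Laplacian consistency and martingale control) are by now routine for SSEP because the generator acts linearly on cylinder functions of degree one.
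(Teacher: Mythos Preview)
The paper does not give a proof of Theorem~\ref{theo:HL}; the result is quoted from \cite{bmns,BGJO,G_lec_not}, so there is no in-paper argument to compare against. Your sketch is indeed the martingale/entropy route used in those references, but your boundary bookkeeping is off by a factor of~$n$. With the normalisation $\pi^n_t=\tfrac1n\sum_x\eta_{tn^2}(x)\delta_{x/n}$, the action of $n^2\mathcal L_n$ on $\langle\pi^n_t,f\rangle$ produces, besides $\langle\pi^n_t,\Delta_n f\rangle$, a flux term $\nabla_n^+f(0)\,\eta_{sn^2}(1)-\nabla_n^-f(1)\,\eta_{sn^2}(n-1)$ of order $O(1)$ and a Glauber term $n^{1-\theta}f(\tfrac1n)(\alpha-\eta_{sn^2}(1))+n^{1-\theta}f(\tfrac{n-1}{n})(\beta-\eta_{sn^2}(n-1))$; this is exactly \eqref{int part of mart} divided by~$\sqrt n$ and uncentered.

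Your case split survives this correction, but the ``main obstacle'' you flag for $\theta<1$ is much milder than you suggest: once $f(0)=f(1)=0$, the surviving flux term is $O(1)$, so one only needs $\int_0^t(\eta_{sn^2}(1)-\alpha)\,ds\to0$ in $L^1$, which in \cite{bmns} follows from a standard entropy/Dirichlet-form replacement using the boundary piece of the Dirichlet form (of size $n^{2-\theta}$). No two-point correlation input is required here; Proposition~\ref{prop:corr_bound} is a tool for the \emph{fluctuation} problem (Theorem~\ref{non_eq_flu}), where the extra $\sqrt n$ in $\mathcal Y^n$ genuinely makes the boundary terms diverge and forces the sharper estimates of Section~\ref{s4}. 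Similarly, in the $\theta=1$ case the boundary terms do not combine ``algebraically'' into the Robin integral: one still needs a replacement of $\eta_{sn^2}(1)$ by a macroscopic density, which is again handled by the entropy method in \cite{bmns}.
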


\bigskip

\subsection{Density fluctuations}

In this subsection we state the main results of this article. More precisely,  in Subsection \ref{sec:space_test_functions}  we introduce the space of test functions where the functional associated to the density fluctuations of the system will act, for each regime of $\theta$. Then in Subsection \ref{sec:non_eq_flu} we give the proper notion of the density fluctuation field and in Theorem \ref{non_eq_flu} we state its convergence along subsequences starting from general initial measures, in Theorem \ref{thm27} we state its convergence when assuming that the initial field converges and in Corollary \ref{cor} we state its convergence when the system starts from a local Gibbs state.  
\subsubsection{The space of test functions}
\label{sec:space_test_functions}
The space $ C^\infty([0,1])$ is the space of functions  $f:[0,1]\to \bb R$ such that $f$ is continuous in $[0,1]$ as well as  all its  derivatives. 
\begin{definition}\label{def1} Let $\mathcal S_{\theta}$ denote the set of functions $f\in C^\infty([0,1])$ such that for any $k\in\mathbb{N}\cup \{0\}$ it holds that 

\begin{enumerate}[(1)]
\item for $\theta<1$: 
$ \p_u^{2k} f(0)=\p_u^{2k} f(1)=0$; 
\item for $\theta=1$:
$ \p_u^{2k+1} f(0)= \p_u^{2k}f(0)$ and $\p_u^{2k+1} f(1)=- \p_u^{2k}f(1)
$;
\item for $\theta>1$:
$ \p_u^{2k+1} f(0)=\p_u^{2k+1} f(1)=0.$ 
\end{enumerate}
\end{definition}

\begin{definition}\label{def:laplacian_operator}

For $\theta\geq 0$, let $-\Delta_\theta$ be the positive self-adjoint operator on $L^2{[0,1]}$, defined  on $f\in\mc S_\theta$ by
\begin{equation}\label{laplacian}
\Delta_\theta f(u)\;=\;\left\{\begin{array}{cl}
\partial_u^2 f(u)\,,& \mbox{if}\,\,\,u\in(0,1),\\ 
\partial_u^2 f(0^+)\,,& \mbox{if} \,\,\,u=0,\\
\partial_u^2 f(1^-)\,,& \mbox{if} \,\,\,u=1.\\
\end{array}
\right.
\end{equation}
Above, $\partial_u^2 f(a^\pm)$ denotes the side limits at the point $a$.  Let   $\nabla_\theta: \mc S_\theta\rightarrow C^\infty([0,1])$  be the operator given by 
\begin{equation}\label{nabla}
\nabla_\theta f(u)\;=\;\left\{\begin{array}{cl}
\partial_u f(u)\,,& \mbox{if}\,\,\,u\in(0,1),\\
\partial_u f(0^+)\,,& \mbox{if} \,\,\,u=0,\\
\partial_u f(1^-)\,,& \mbox{if} \,\,\,u=1.\\
\end{array}
\right.
\end{equation}

\end{definition}

\begin{definition}\label{def2}
Let $T_t^\theta:\mc S_\theta\to\mc S_\theta$ be the semigroup associated to \eqref{heat_eq} with the corresponding boundary conditions with $\alpha=\beta=0$. That is, given $f\in \mc S_\theta$, by $T_t^\theta f$ we mean the solution of the homogeneous version of \eqref{heat_eq} with initial condition $f$.
\end{definition}

\begin{definition}
Let $\mc S'_{\theta}$ be the topological dual of $\mc S_{\theta}$ with respect to the topology generated by the seminorms 
 \begin{equation}\label{semi-norm}
\|f\|_{k}=\sup_{u\in[0,1]}|\partial_u^kf(u)|\,,
\end{equation}
where $k\in\mathbb{N}\cup \{0\}$.
That is, $\mc S'_\theta$ consists on  all the linear functionals $f:\mc S_\theta\to \bb R$ which are continuous with respect to all the seminorms $\Vert \cdot \Vert_k$.
\end{definition}

Let  $\mc D([0,T],\mc S'_\theta)$ (resp. $\mc C([0,T], \mc S'_\theta)$) be the space of trajectories which are right continuous and with left limits (resp. continuous) and  taking values in $\mc S'_\theta$.

\subsubsection{Non-equilibrium fluctuations}\label{sec:non_eq_flu}

\label{sec:results_non_eq}

\begin{definition}[Density fluctuation field]
We define the density fluctuation field $\mc Y_\cdot^n$ as the time-trajectory of linear functionals acting on functions $f\in\mc S_\theta$ as
\begin{equation}\label{density field}
\mc Y^n_t(f)\;=\;\frac{1}{\sqrt{n}}\sum_{x=1}^{n-1}f(\tfrac{x}{n})\Big(\eta_{tn^2}(x)-\rho^n_t(x)\Big)\,.
\end{equation}
\end{definition}
Recall that above $\rho_t^n(x)=\bb E_{\mu_n}[\eta_{tn^2}(x)]$.
For each $n\geq 1$, let  $\bb Q_n$ be the probability measure on $\mc D([0,T],\mc S'_\theta)$  induced by the density fluctuation field $\mc Y^n_\cdot$ and the measure $\mu_n$.  
\begin{theorem}[Non-equilibrium fluctuations]\label{non_eq_flu}
\quad

 Suppose that $\rho_0:[0,1]\to[0,1]$ is measurable and of class  $C^6$ and that $\mu_n$ is such that \eqref{eq:ass_ini_corr_1} and  \eqref{eq:ass_ini_corr} hold. Then, the sequence of measures $\{\bb Q_n\}_{ n\in \mathbb{N}}$ is tight on  $\mc D([0,T],\mc S'_\theta)$ and 
all limit points $\bb Q$ are probability measures concentrated on paths $\mathcal{Y}_\cdot$ satisfying 
 \begin{equation}\label{characterization}
\mathcal{Y}_t(f)\;=\;\mathcal{Y}_0(T_t^\theta f)+\mathcal W_t(f)\,,
\end{equation}
for any $f\in\mathcal S_\theta$.
Above $T_t^\theta$ is the semigroup  given  in  Definition \ref{def2} and  $\mathcal W_t(f)$ is a mean-zero Gaussian variable of variance  
\begin{equation}\label{eq212}
 \int_0^t\|\nabla T^\theta_{t-r}  f\|^2_{L^2(\rho_r)}dr\,,
\end{equation}
where for $r>0$ 
\begin{equation}\label{norm}
\begin{split}
\<f, g\>_{L^2(\rho_r)}= \int_0^12\chi(\rho(r,u))\,f(u)g(u)\,du\,, \\
\end{split}
\end{equation}
  $\rho(t,u)$ is the solution of the hydrodynamic equation  \eqref{heat_eq} with the corresponding boundary conditions, and $\chi(u)=u(1-u)$. 
Moreover,  $\mathcal{Y}_0$ and $\mathcal W_t$ are uncorrelated in the sense that $\bb E \Big[\mc Y_0(f) \, \mathcal W_t(g) \Big]\;=\;0$,
for all $f,g\in \mc S_\theta$.
\end{theorem}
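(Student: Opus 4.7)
The plan is to follow the standard martingale approach to non-equilibrium fluctuations. For $f\in\mc S_\theta$, Dynkin's formula yields that
$$
M^n_t(f) := \mc Y^n_t(f) - \mc Y^n_0(f) - \int_0^t \Gamma^n_s(f)\, ds
$$
is a martingale, where, using that $\rho^n_t$ itself solves \eqref{eq:disc_heat},
$$
\Gamma^n_s(f) = \frac{1}{\sqrt n}\sum_{x=1}^{n-1} f(x/n)\, n^2 \big(\mathfrak B^\theta_n (\eta_s - \rho^n_s)\big)(x).
$$
I would perform a summation by parts, exploiting the symmetry of $\mathfrak B^\theta_n$ (with $\eta_s-\rho^n_s$ extended by zero at $\{0,n\}$), to move the discrete operator onto the test function, obtaining a bulk contribution $\mc Y^n_s\big(n^2\mathfrak B^\theta_n f(\cdot/n)\big)$ plus boundary corrections supported at $\{1,n-1\}$. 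Because $f$ is smooth, $n^2\mathfrak B^\theta_n f(\cdot/n)$ converges uniformly on compact subsets of $(0,1)$ to $\partial_u^2 f=\Delta_\theta f$, and the interior part is controlled in $L^2$ by \eqref{eq1812}.

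The heart of the argument is the treatment of the boundary corrections, which to leading order read $n^{2-\theta}n^{-1/2} f(0)(\eta_s(1)-\rho^n_s(1))$ (and symmetrically at $n-1$), together with the pieces of the discrete Laplacian located at sites $1$ and $n-1$. In each regime of $\theta$, the conditions built into $\mc S_\theta$ cancel the leading divergent terms: $f(0)=f(1)=0$ for $\theta<1$, the Robin identities $f'(0)=f(0)$ and $f'(1)=-f(1)$ for $\theta=1$, and $f'(0)=f'(1)=0$ for $\theta>1$. What remains must still be shown to vanish in $L^2$, and this is precisely where Proposition \ref{prop:corr_bound} enters: computing the expected square of the time-integrated boundary term reduces to bounds on $\varphi^n_t(1,y)$ and $\varphi^n_t(n-1,y)$, and the sharp scaling $|\varphi^n_t(1,y)|\lesssim n^{\theta-2}$ (for $\theta\le 1$, respectively $n^{-1}$ for $\theta\ge 1$) supplies the missing decay. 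A direct carr\'e du champ computation for the quadratic variation gives
$$
\<M^n(f)\>_t \approx \int_0^t \frac{1}{n}\sum_{x=1}^{n-2}\big(\nabla_n f(x/n)\big)^2 \big(\eta_s(x+1)-\eta_s(x)\big)^2 ds
$$
modulo a boundary contribution of order $n^{-\theta}$. The standard replacement of $(\eta_s(x+1)-\eta_s(x))^2$ by $2\chi(\rho^n_s(x))$ (again justified using \eqref{eq1812}) together with the convergence of $\rho^n_s$ to $\rho(s,\cdot)$ supplied by Theorem \ref{theo:HL} yields $\<M^n(f)\>_t\to \int_0^t\|\nabla f\|_{L^2(\rho_s)}^2\, ds$.

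Tightness of $\{\bb Q_n\}$ in $\mc D([0,T],\mc S'_\theta)$ follows from Mitoma's criterion once tightness of each scalar process $\{\mc Y^n_\cdot(f)\}$ is established via Aldous' criterion, for which the moment bounds from Proposition \ref{prop:corr_bound} and the controls on $\Gamma^n_s(f)$ and $\<M^n(f)\>_t$ above suffice. Along a convergent subsequence the limit $\mc Y_\cdot$ is concentrated on paths for which $M_t(f):=\mc Y_t(f)-\mc Y_0(f)-\int_0^t \mc Y_s(\Delta_\theta f)\,ds$ is a continuous martingale with deterministic quadratic variation $\int_0^t \|\nabla f\|_{L^2(\rho_s)}^2\, ds$, hence Gaussian. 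A Duhamel argument using the semigroup $T^\theta_t$ and the identity $\partial_s T^\theta_{t-s} f=-\Delta_\theta T^\theta_{t-s} f$ then converts the martingale formulation into the mild formulation \eqref{characterization} with $\mc W_t(f)=\int_0^t dM_s(T^\theta_{t-s} f)$, whose variance is exactly \eqref{eq212}; the orthogonality $\bb E[\mc Y_0(f)\mc W_t(g)]=0$ is immediate from the martingale property. The principal obstacle throughout is the boundary-term analysis in the second paragraph: as the introduction emphasizes, naive uniform bounds on the two-point function are \emph{not} enough, and the refined boundary-sharp estimate of Proposition \ref{prop:corr_bound}, offering an extra factor $n^{\theta-1}$ (or $n^{-\theta}$ when $\theta>1$) at sites adjacent to the reservoirs, is exactly what closes the gap.
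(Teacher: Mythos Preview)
Your outline is close to the paper's strategy, but there is a genuine gap in the step you flag as ``the heart of the argument''. Computing the $L^2(\bb P_{\mu_n})$-norm of the time-integrated boundary term does \emph{not} reduce to bounds on the spatial two-point function $\varphi^n_t(1,y)$ alone. Writing
\[
\bb E_{\mu_n}\Big[\Big(\int_0^t C^\theta_n\,\bar\eta_{sn^2}(1)\,ds\Big)^2\Big]
= 2(C^\theta_n)^2\int_0^t\!\!\int_0^r \bb E_{\mu_n}\big[\bar\eta_{sn^2}(1)\bar\eta_{rn^2}(1)\big]\,ds\,dr,
\]
the inner expectation is a \emph{space-time} correlation at the single site $x=1$. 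Conditioning on time $s$ and using the Markov property, the paper shows this equals $\sum_{z\neq 1}P^{n,\theta}_{r-s}(1,z)\varphi^n_s(1,z)+P^{n,\theta}_{r-s}(1,1)\chi(\rho^n_s(1))$, where $P^{n,\theta}$ is the transition kernel of the one-dimensional absorbed walk. The first piece is indeed handled by Proposition~\ref{prop:corr_bound}, but the second carries an $O(1)$ factor $\chi(\rho^n_s(1))$ and is \emph{not} small pointwise; its smallness comes from the local-time estimate
\[
\int_0^t\!\!\int_0^r P^{n,\theta}_{r-s}(1,1)\,ds\,dr\;\lesssim\; t\,\frac{n^\theta}{n^2},
\]
which in the paper requires a separate coupling of $P^{n,\theta}$ to $P^{n,0}$ (Lemma~\ref{lem:bound_p_theta}) together with an explicit eigenfunction computation (Lemma~\ref{int_estimate_lemma}). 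Without this estimate, the bound you would obtain is $(C^\theta_n)^2\,t^2$, which diverges. The extra factor you attribute to Proposition~\ref{prop:corr_bound} does not cover this diagonal term.

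The same omission resurfaces in your tightness argument: for the boundary additive functionals Aldous' criterion is \emph{not} sufficient (the supremum over stopping times does not yield a useful bound), and the paper switches to the Kolmogorov--Centsov criterion, which again requires the local-time estimate above, refined to give an exponent strictly larger than~$1$ in $t-s$. A minor further difference: rather than passing to the limit and then applying Duhamel, the paper inserts the time-dependent test function $T^\theta_{t-s}f$ directly into Dynkin's formula before taking limits, which makes the cancellation $\partial_s T^\theta_{t-s}f+\Delta_\theta T^\theta_{t-s}f=0$ exact at the prelimit level; your ordering is fine in principle, but the missing transition-probability input is essential in either route.
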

\begin{theorem}[Ornstein-Uhlenbeck limit]\label{thm27}
\quad

Assume that the sequence of  initial density fields $\{\mc{Y}_0^n\}_{n\in\bb N}$ converges, as $n\to\infty$, to a mean-zero Gaussian field  $\mc Y_0$ with covariance given on $f,g\in\mc S_\theta$ by
\begin{equation}\label{covar}
\lim_{n\to\infty}\mathbb E_{\mu_n}\Big[\mc Y^n_0 (f)\mc Y^n_0(g)\Big]\;=\;\mathbb E\,\Big[\mc Y_0 (f)\mc Y_0(g)\Big]\;:=\;\sigma(f,g)\,.
\end{equation}
Then, the sequence $\{\bb Q_n\}_{ n\in \mathbb{N}}$ converges, as $n\to\infty$, to a generalized Ornstein-Uhlenbeck (O.U.) process, which is the formal solution of  the equation:
\begin{equation} \label{O.U.}
\partial_t \mathcal{Y}_t\;=\;\Delta_\theta\mathcal{Y}_tdt+\sqrt{2\chi(\rho_t)}\nabla_\theta W_t\,,
\end{equation}
where $ W_t$ is a space-time white-noise of unit variance and $\Delta_\theta$,  $\nabla_\theta$ are given in Definition~\ref{def:laplacian_operator}. As a consequence, the covariance of the limit field $\mathcal{Y}_t$ is given on $f,g\in{\mc S_\theta}$ by
\begin{equation}\label{covariance non eq limit field}
 E\,[\mathcal{Y}_t(f)\mathcal{Y}_s(g)]\;=\;\sigma(T^\theta_tf,T^\theta_sg)+\int_0^s\<\nabla_\theta T^\theta_{t-r} f, \nabla_\theta T^\theta_{s-r}g\>_{L^2(\rho_r)}\,dr\,.
\end{equation}
\end{theorem}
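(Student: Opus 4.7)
My plan is to build Theorem \ref{thm27} on top of Theorem \ref{non_eq_flu}, which already provides tightness of $\{\bb Q_n\}$ on $\mc D([0,T],\mc S'_\theta)$ and characterizes every limit point as concentrated on trajectories obeying the mild equation \eqref{characterization}, with $\mathcal{W}_t(f)$ a centered Gaussian of variance \eqref{eq212} uncorrelated with $\mathcal{Y}_0$. What remains is to pin down the joint law across times, so that the limit is unique and $\bb Q_n$ converges along the full sequence, and to establish the explicit covariance formula \eqref{covariance non eq limit field}.

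The first step is to recognize $\mathcal{W}_t(f)$ as the scaling limit of the Dynkin martingale associated with the time-dependent test function $f_s := T^\theta_{t-s} f$. Writing
\begin{equation*}
M^n_r(f,t) = \mc Y^n_r(T^\theta_{t-r} f) - \mc Y^n_0(T^\theta_t f) - \int_0^r \Phi^n_s(f,t)\, ds, \qquad r\in[0,t],
\end{equation*}
where $\Phi^n_s(f,t)$ collects the boundary discrepancy between $n^2 \mc L_n$ applied to $f_s(x/n)$ and the continuum $\Delta_\theta f_s$, the scheme of Section \ref{s3} combined with the sharp estimates of Proposition \ref{prop:corr_bound} shows that $\Phi^n_s(f,t)\to 0$ in $L^2(\bb P_{\mu_n})$ and identifies the scaling limit of $M^n_t(f,t)$ with $\mathcal{W}_t(f)$. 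An analogous carr\'e du champ computation produces the cross bracket
\begin{equation*}
\lim_{n\to\infty} \bra M^n(f,t), M^n(g,s)\ket_s \;=\; \int_0^s \bra \nabla_\theta T^\theta_{t-r} f,\, \nabla_\theta T^\theta_{s-r} g\ket_{L^2(\rho_r)}\, dr, \qquad s\le t,
\end{equation*}
which reduces to the variance \eqref{eq212} when $f=g$ and $s=t$. A martingale CLT in the spirit of \cite{FGN_Robin} then yields joint convergence of any finite collection $(M^n_\cdot(f_i, t_i))_{i\le k}$ to a centered Gaussian martingale family with the above brackets.

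Combining this with assumption \eqref{covar} gives joint Gaussian convergence of the pair (initial field, martingale family); since Theorem \ref{non_eq_flu} forces zero covariance between $\mc Y_0$ and $\mathcal{W}_t$, the Gaussian structure upgrades uncorrelation to independence. Through the mild equation \eqref{characterization} this determines every finite-dimensional distribution of $\mathcal{Y}_\cdot$, giving uniqueness of the limit and full convergence of $\bb Q_n$. The covariance \eqref{covariance non eq limit field} then follows from expanding $\mathcal{Y}_t(f)\mathcal{Y}_s(g)$ via \eqref{characterization}: the $\mathcal{Y}_0$-term contributes $\sigma(T^\theta_t f, T^\theta_s g)$, the two cross terms vanish by independence, and $E[\mathcal{W}_t(f)\mathcal{W}_s(g)] = E[\bra M(f,t), M(g,s)\ket_s]$ by the martingale property, which is exactly the integral displayed above. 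The generalized OU equation \eqref{O.U.} is then the Itô-type reading of \eqref{characterization} with driving noise $\sqrt{2\chi(\rho_t)}\nabla_\theta W_t$, whose covariance kernel reproduces the bracket just computed.

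The main technical obstacle lies in the first step: making $\Phi^n_s(f,t)\to 0$ and the bracket limit uniform across the three regimes $\theta<1$, $\theta=1$, $\theta>1$ simultaneously. The boundary conditions built into $\mc S_\theta$ (Definition \ref{def1}) were tailored so that the discrete integration-by-parts identities cancel the $n^{-\theta}$ prefactor in \eqref{ln}, but the surviving boundary remainders must be controlled using the anomalous orders $n^\theta/n^2$ and $1/n$ for correlations at the sites $1$ and $n-1$ provided by Proposition \ref{prop:corr_bound}. Once these estimates are in place, the remaining martingale CLT and Gaussian bookkeeping are routine.
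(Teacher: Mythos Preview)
Your proposal is correct and follows the same strategy the paper adopts: build on Theorem \ref{non_eq_flu} (tightness plus the mild-equation characterization \eqref{characterization}), identify $\mathcal W_t(f)$ as the limit of the Dynkin martingale via Lemma \ref{lem:conv_mart}, and then use the assumed convergence \eqref{covar} of the initial field together with the uncorrelation of $\mathcal Y_0$ and $\mathcal W_t$ to obtain uniqueness of the limit and the covariance formula. The paper does not spell out these last steps separately for Theorem \ref{thm27}; it treats them as standard once Theorem \ref{non_eq_flu} and Lemma \ref{lem:conv_mart} are in hand and refers to \cite{FGN_Robin} for the detailed martingale CLT and Gaussian bookkeeping, which is precisely the content you outlined.
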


As a consequence of the previous result we obtain the 
non-equilibrium fluctuations starting from a Local Gibbs state.

\begin{corollary}[Local Gibbs state]\label{cor}

\quad

Fix a measurable profile $\rho_0:[0,1]\to[0,1]$  of class  $C^6$ 
and  start the process from a Bernoulli product measure with marginal given by $\nu_{\rho_0(\cdot)}\{\eta:\eta(x)=1\}=\rho_0(\tfrac{x}{n})$.
Then, the previous result is true, but in this case we have, for $f,g\in{\mc S_\theta}$, that
\begin{equation}\label{covariance_local_gibbs}
\sigma(T^\theta_tf,T^\theta_sg)\;=\;\int_0^1 \chi(\rho_0(u))\,T^\theta_tf(u)T^\theta_sg(u)\,du\,,
\end{equation}
where $\rho_0(\cdot)$ is the  initial condition of the hydrodynamic equation \eqref{heat_eq}.
\end{corollary}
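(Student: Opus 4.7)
The plan is to realize Corollary \ref{cor} as a direct specialization of Theorem \ref{thm27}: I need to verify that the Bernoulli product measure $\nu_{\rho_0(\cdot)}$ fits into the hypotheses of that theorem, namely that the initial profile is well approximated, the initial correlations satisfy \eqref{eq:ass_ini_corr_1}--\eqref{eq:ass_ini_corr}, and that the initial density fluctuation field converges to a Gaussian field whose covariance is the one prescribed in \eqref{covariance_local_gibbs}. Once these three ingredients are in place, the conclusion is immediate from Theorem \ref{thm27}.

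For the first two conditions, I would exploit the fact that under $\nu_{\rho_0(\cdot)}$ the occupation variables $\{\eta_0(x)\}_{x\in\Sigma_n}$ are independent. This gives $\rho_0^n(x)=\mathbb E_{\mu_n}[\eta_0(x)]=\rho_0(x/n)$ on the nose, so \eqref{assumption 1} holds trivially (with error $0$), and since the regularity assumption $\rho_0\in C^6$ is assumed in the statement, the hypothesis of Lemma \ref{lem:discrete_gradient} is met. Independence further implies $\varphi_0^n(x,y)=\mathbb E[(\eta_0(x)-\rho_0(x/n))(\eta_0(y)-\rho_0(y/n))]=0$ for all $x<y$ in $\Sigma_n$, so both \eqref{eq:ass_ini_corr_1} and \eqref{eq:ass_ini_corr} hold trivially, regardless of $\theta$.

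For the convergence of the initial field, I observe that
\begin{equation*}
\mc Y_0^n(f)=\frac{1}{\sqrt n}\sum_{x=1}^{n-1}f(\tfrac xn)\bigl(\eta_0(x)-\rho_0(\tfrac xn)\bigr)
\end{equation*}
is a sum of independent, mean-zero, uniformly bounded random variables with variances $n^{-1}f(x/n)^2\chi(\rho_0(x/n))$. A straightforward application of the Lindeberg--Feller central limit theorem (the Lindeberg condition is satisfied since $|\eta_0(x)-\rho_0(x/n)|\le 1$) yields
\begin{equation*}
\lim_{n\to\infty}\mathbb E_{\mu_n}\bigl[\mc Y_0^n(f)\mc Y_0^n(g)\bigr]=\int_0^1 f(u)g(u)\,\chi(\rho_0(u))\,du=:\sigma(f,g),
\end{equation*}
for all $f,g\in\mc S_\theta$, and the Cramér--Wold device upgrades this to the joint convergence of $\mc Y_0^n$ to a mean-zero Gaussian field with covariance $\sigma$.

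Having verified all hypotheses, Theorem \ref{thm27} applies and yields convergence of $\{\bb Q_n\}$ to the generalized Ornstein--Uhlenbeck process \eqref{O.U.}; substituting the explicit form of $\sigma$ into \eqref{covariance non eq limit field} produces \eqref{covariance_local_gibbs} and completes the proof. There is no real obstacle here, since every substantive estimate has already been done in the proofs of Proposition \ref{prop:corr_bound} and Theorems \ref{non_eq_flu}--\ref{thm27}; the only mild point to be careful about is that the Lindeberg-type CLT for the initial field should be invoked uniformly over $f$ in a countable dense subfamily of $\mc S_\theta$ so as to combine it with the tightness coming from Theorem \ref{non_eq_flu}.
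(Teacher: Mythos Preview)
Your proposal is correct and is exactly the intended argument: the paper presents Corollary~\ref{cor} as an immediate consequence of Theorem~\ref{thm27} without giving a separate proof, and the verification you sketch (independence forces $\varphi_0^n\equiv 0$ and $\rho_0^n(x)=\rho_0(x/n)$, then the Lindeberg--Feller CLT identifies $\sigma$) is precisely what is meant.
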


\subsection{Non-equilibrium stationary fluctuations}

Now we start the  process from the stationary measure $\mu_{ss}^n$ so that $\rho_{ss}^n(x)=\bb E_{\mu^n_{ss}}[\eta_{tn^2}(x)]$ and the stationary density fluctuation field is acting on functions $f\in \mc S_{\theta}$ as
\begin{equation*}
\mathcal Y^n_t (f) \;=\; \frac 1{\sqrt n} \sum_{x=1}^{n-1} f\big(\tfrac x n\big)
\big( \eta_{tn^2}(x) - \rho^n_{ss}(x)\big)\,. 
\end{equation*}
As above, for each $n\geq 1$, let  $\bb Q^{ss}_n$ be the probability measure on $\mc D([0,T],\mc S'_\theta)$  induced by the density fluctuation field $\mc Y^n_\cdot$ and the measure $\mu^n_{ss}$.   With respect to this starting measure we have that:
\begin{theorem}[Stationary fluctuations: $\theta\neq 1$]\label{theo:flustatheta}

\quad

Suppose to start the process from $\mu_{ss}^n$. Then, $\mathcal{Y}^n$ converges to the centered Gaussian field $\mathcal{Y}$  with covariance given on $f,g\in\mc S_{\theta}$ by:
\begin{equation}\label{stat convariance}
 E[\mathcal{Y}(f)\mathcal{Y}(g)]\;=\;\int_0^1 \chi(\overline{\rho}(u))f(u)g(u)\,du-(\beta-\alpha)^2\int_0^1 [(-\Delta_\theta)^{-1}f(u)]g(u) \,du\,,
\end{equation}
where $\overline{\rho}(\cdot)$ is given in \eqref{eq:hyd_stat_sol}.
\end{theorem}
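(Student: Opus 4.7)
The plan is to apply Theorem~\ref{thm27} with initial measure $\mu_n=\mu^n_{ss}$ and reduce the computation of the stationary covariance to that of the initial covariance, using the matrix-ansatz formulas available for $\mu^n_{ss}$. First I would check the hypotheses of Theorem~\ref{thm27}. The stationary profile $\rho^n_{ss}$ in \eqref{eq:stat_sol} converges uniformly to $\bar\rho$ by \eqref{eq:conve_emp_prof_stat_prof}, which is $C^6$ since it is linear (resp.\ constant) for $\theta<1$ (resp.\ $\theta>1$); and the explicit expression \eqref{eq:corr_est_stat} immediately yields the correlation bounds \eqref{eq:ass_ini_corr_1} and \eqref{eq:ass_ini_corr} required to invoke Proposition~\ref{prop:corr_bound} and hence Theorem~\ref{non_eq_flu}.

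The core of the proof is the computation of the limit of the initial covariance. Splitting the diagonal and off-diagonal contributions,
\begin{equation*}
E_{\mu^n_{ss}}\!\bigl[\mathcal Y^n_0(f)\mathcal Y^n_0(g)\bigr] \;=\; \tfrac{1}{n}\sum_{x=1}^{n-1} f(\tfrac xn)g(\tfrac xn)\chi(\rho^n_{ss}(x)) \;+\; \tfrac{1}{n}\sum_{\substack{1\le x,y\le n-1\\ x\ne y}} f(\tfrac xn)g(\tfrac yn)\varphi^n_{ss}(x,y),
\end{equation*}
the diagonal sum converges by Riemann-sum approximation and \eqref{eq:conve_emp_prof_stat_prof} to $\int_0^1\chi(\bar\rho(u))f(u)g(u)\,du$. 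For the off-diagonal sum I substitute \eqref{eq:corr_est_stat}. When $\theta<1$ a straightforward expansion gives
\begin{equation*}
\varphi^n_{ss}(x,y) \;=\; -\frac{(\beta-\alpha)^2}{n^{3}}\,x(n-y)\bigl(1+o(1)\bigr),\qquad 1\le x<y\le n-1,
\end{equation*}
uniformly, and after symmetrizing in $x\leftrightarrow y$ the limiting Riemann sum equals
\begin{equation*}
-(\beta-\alpha)^2\!\iint_{[0,1]^2} G(u,v)\,f(u)g(v)\,du\,dv,
\end{equation*}
where $G(u,v)=\min(u,v)(1-\max(u,v))$ is the Dirichlet Green's function of $-\partial_u^2$; by self-adjointness of $G$ this agrees with $-(\beta-\alpha)^2\int_0^1[(-\Delta_\theta)^{-1}f(u)]g(u)\,du$. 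When $\theta>1$ one finds $\varphi^n_{ss}(x,y)\sim -(\beta-\alpha)^2/(8n^\theta)$ uniformly, so the off-diagonal contribution is $O(n^{1-\theta})\to 0$, which is consistent with the Neumann pseudo-inverse $(-\Delta_\theta)^{-1}$ being trivial on its natural domain.

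The most delicate step is establishing that $\mathcal Y^n_0$ converges to a centered Gaussian limit under $\mu^n_{ss}$. I would carry this out by the method of moments: using the matrix-ansatz relations \eqref{eq:algSLOW}, one obtains closed-form expressions for arbitrary $k$-point functions of $\mu^n_{ss}$ and, crucially, uniform bounds on the connected correlations that force the vanishing as $n\to\infty$ of every joint cumulant of order $\ge 3$ of $\mathcal Y^n_0(f)$. This is the genuinely new analytic input going beyond Proposition~\ref{prop:corr_bound} and I expect it to be the hardest part of the argument.

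Finally, stationarity closes the proof: under $\mathbb P_{\mu^n_{ss}}$ the law of $\mathcal Y^n_t$ is independent of $t$, so the limit process $\mathcal Y_t$ furnished by Theorem~\ref{thm27} is strictly stationary in time. Its equal-time covariance therefore equals the limiting initial covariance $\sigma(f,g)$ computed above, which is exactly \eqref{stat convariance}.
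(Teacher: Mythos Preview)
Your approach is essentially the one the paper has in mind: the paper does not give a standalone proof of Theorem~\ref{theo:flustatheta}, but presents it as a consequence of Theorem~\ref{thm27} applied with $\mu_n=\mu^n_{ss}$, together with the explicit matrix-ansatz formulas \eqref{eq:stat_sol} and \eqref{eq:corr_est_stat}. Your computation of $\sigma(f,g)$ via the diagonal/off-diagonal split and your identification of the Dirichlet Green's function for $\theta<1$ are correct, and you are right that the Gaussianity of $\mathcal Y_0^n$ under $\mu_{ss}^n$ (via control of higher-order cumulants from the matrix ansatz) is the genuinely new analytic input---the paper does not supply this argument either.

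Two technical points deserve a closer look. First, the hypothesis \eqref{assumption 1} of Theorem~\ref{non_eq_flu} demands $\max_x|\rho^n_0(x)-\rho_0(\tfrac xn)|\lesssim \tfrac1n$, but for $\theta\in(0,1)$ one only has $|\rho^n_{ss}(x)-\bar\rho(\tfrac xn)|=O(n^{\theta-1})$, which is strictly weaker. This is not a real obstacle: \eqref{assumption 1} is only used in the proof of Lemma~\ref{lem:discrete_gradient}, whose conclusion $|\rho^n_t(x+1)-\rho^n_t(x)|\lesssim\tfrac1n$ is immediate in the stationary setting since $\rho^n_{ss}$ is linear with slope $a_n=O(1/n)$. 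You should make this bypass explicit rather than claiming the hypotheses hold as stated. Second, your remark that the Neumann pseudo-inverse is ``trivial on its natural domain'' is inaccurate---the Neumann Green's function is a perfectly nontrivial operator on mean-zero functions. What your computation actually shows is that for $\theta>1$ the off-diagonal contribution is $O(n^{1-\theta})\to 0$, so the limiting covariance reduces to $\int_0^1\chi(\bar\rho)fg\,du$; the second term in \eqref{stat convariance} should simply be read as absent in that regime. State this plainly rather than appealing to a spurious triviality.
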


Note that when $\alpha=\beta=\rho$ the stationary measure is the Bernoulli product measure $\nu_\rho$ and in this case the density fluctuation field is given by
\begin{equation*}
\mathcal Y^n_t (f) = \frac 1{\sqrt n} \sum_{x=1}^{n-1} f\big(\tfrac x n\big)
\big( \eta_{tn^2}(x) - \rho\big)
\end{equation*}
and it converges to a centered Gaussian field $\mathcal{Y}$  with covariance given on $f,g\in\mc S_{\theta}$ by:
\begin{equation}\label{eq:equi_stat convariance}
 E_{\bb Q}[\mathcal{Y}(f)\mathcal{Y}(g)]\;=\;\chi(\rho)\int_0^1 f(u)g(u)\,du\,.
\end{equation}
Last result was obtained in \cite{FGNPSPDE} for all the regimes of $\theta\geq 0$.
We recall that in \cite{FGN_Robin} the stationary fluctuations where derived for the case $\theta=1$ when $\alpha\neq \beta$. The precise statement in that case is  given in the next result.

\begin{theorem}[Stationary fluctuations: $\theta=1$, \cite{FGN_Robin}]\label{theo:flustatheta1}
\quad

Suppose to start the process from $\mu_{ss}^n$ with $\alpha\neq{\beta}$. Then, $\mathcal{Y}^n$ converges to the centered Gaussian field $\mathcal{Y}$  with covariance given on $f,g\in\mc S_{\theta}$ by:
\begin{equation*}
\begin{split}
& E[\mathcal{Y}(f)\mathcal{Y}(g)]\;=\;\int_0^1 \chi(\overline{\rho}(u))f(u)g(u)\,du-\Big(\tfrac{\beta-\alpha}{3}\Big)^2\int_0^1 [(-\Delta_\theta)^{-1}f(u)]g(u) \;du\\
&\quad \quad \quad\quad+\tfrac{2(2\beta+\alpha)(2\beta-1)}{3}\int_0^\infty\hspace{-10pt} T^\theta_t f(1)T^\theta_t g(1)\,dt+\tfrac{2(\beta+2\alpha)(2\alpha-1)}{3}\int_0^\infty\hspace{-10pt} T^\theta_t f(0)T^\theta_t g(0)\,dt,\\
\end{split}
\end{equation*}
where $$\overline{\rho}(u)=\big(\pfrac{\beta-\alpha}{3}\big)\,u +\pfrac{\beta+2\alpha}{3}$$ is the stationary solution of \eqref{heat_eq} with the Robin boundary conditions given  in \eqref{Robin} in the statement of Theorem  \ref{theo:HL}.
\end{theorem}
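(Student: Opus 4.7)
The plan is to deduce this theorem from the non-equilibrium Ornstein–Uhlenbeck limit of Theorem \ref{thm27}, applied to the stationary initial measure $\mu_{ss}^n$: once the theorem applies, it remains only to identify the initial covariance $\sigma(f,g)$ of the limiting field $\mathcal{Y}_0$, since by stationarity $E[\mathcal{Y}_t(f)\mathcal{Y}_t(g)] = \sigma(f,g)$ for every $t$.

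First I would check the hypotheses of Theorem \ref{thm27}. The hydrostatic limit $\bar{\rho}$ from \eqref{eq:hyd_stat_sol} is affine in the $\theta=1$ regime, hence of class $C^6$, and the explicit formula \eqref{eq:stat_sol} together with \eqref{eq:conve_emp_prof_stat_prof} yields \eqref{assumption 1}. The explicit expression \eqref{eq:corr_est_stat} specialized to $\theta=1$ gives the correlation bounds \eqref{eq:ass_ini_corr_1} and \eqref{eq:ass_ini_corr} directly. For the Gaussianity of $\mathcal{Y}_0$, one controls the higher joint cumulants of $\mathcal{Y}_0^n$ under $\mu_{ss}^n$ by iterating the matrix-ansatz relations \eqref{eq:algSLOW} to show that all $k$-point truncated correlations, for $k \geq 3$, are of order $n^{-(k-1)}$, which suffices for a central limit theorem after the $n^{-1/2}$ rescaling.

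Second, I would compute $\sigma(f,g)$ explicitly. Splitting diagonal and off-diagonal contributions,
\begin{equation*}
\mathbb{E}_{\mu_{ss}^n}\bigl[\mathcal{Y}_0^n(f)\mathcal{Y}_0^n(g)\bigr] = \frac{1}{n}\sum_{x=1}^{n-1}\chi(\rho_{ss}^n(x)) f(\tfrac x n) g(\tfrac x n) + \frac{2}{n}\sum_{1 \le x<y\le n-1} f(\tfrac x n) g(\tfrac y n)\,\varphi_{ss}^n(x,y).
\end{equation*}
The diagonal piece converges to $\int_0^1 \chi(\bar{\rho}(u))\,f(u)g(u)\,du$ by uniform convergence of $\rho_{ss}^n$ to $\bar{\rho}$, producing the first term of the stated formula. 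For the off-diagonal piece, substituting $\theta=1$ into \eqref{eq:corr_est_stat} gives the leading behavior $\varphi_{ss}^n(x,y) \sim -\tfrac{(\beta-\alpha)^2}{27\,n}(u+1)(2-v)$ with $u=x/n < v=y/n$, so the sum converges to a double integral against an explicit kernel of the form $(u+1)(2-v)\mathbf{1}_{\{u<v\}}$ plus its symmetric counterpart.

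Third, I would identify this double integral with the three remaining terms in the stated covariance. After symmetrization, the bulk part of the kernel reproduces, up to the factor $(\tfrac{\beta-\alpha}{3})^2$, the Green's function representation of $(-\Delta_\theta)^{-1}$ for the Robin operator of Definition \ref{def:laplacian_operator}, yielding $-\bigl(\tfrac{\beta-\alpha}{3}\bigr)^2 \int_0^1 [(-\Delta_\theta)^{-1}f(u)]\,g(u)\,du$. The remaining pieces of the kernel localize the evaluation of $f$ and $g$ near $u \in \{0,1\}$, and by means of the identity $(-\Delta_\theta)^{-1} = \int_0^\infty T_t^\theta\,dt$ on $\mathcal{S}_\theta$, together with integration by parts against the Robin boundary relations, they recast as the trace integrals $\int_0^\infty T_t^\theta f(a)T_t^\theta g(a)\,dt$ for $a=0,1$; the prefactors $\tfrac{2(2\beta+\alpha)(2\beta-1)}{3}$ and $\tfrac{2(\beta+2\alpha)(2\alpha-1)}{3}$ arise from collecting the constant terms in the expansion of $\varphi_{ss}^n$ against the affine profile $\bar{\rho}$. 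The main obstacle is precisely this algebraic/spectral bookkeeping: the Robin Green's function does not separate cleanly into bulk and boundary pieces, and a careful use of the spectral decomposition of $-\Delta_\theta$ (or equivalently of the duality between $(-\Delta_\theta)^{-1}$ and the semigroup $T_t^\theta$) is required in order to match the coefficients exactly.
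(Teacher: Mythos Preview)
This theorem is not proved in the present paper: it is quoted verbatim from \cite{FGN_Robin} (as the sentence preceding it indicates), so there is no in-paper argument to compare with. Your proposal is therefore an attempt at an independent proof; let me evaluate it on its own merits.

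Your first two steps are fine. The stationary measure does satisfy the hypotheses of Theorems~\ref{non_eq_flu}--\ref{thm27}, and the limit
\[
\sigma(f,g)=\lim_{n\to\infty}\bb E_{\mu_{ss}^n}\!\big[\mc Y_0^n(f)\,\mc Y_0^n(g)\big]
\]
is exactly what you write: the diagonal part gives $\int_0^1\chi(\bar\rho)fg$, and since for $\theta=1$ one has $\varphi_{ss}^n(x,y)\sim -\tfrac{(\beta-\alpha)^2}{27\,n}(u+1)(2-v)$ with $u=\tfrac xn<\tfrac yn=v$, the off-diagonal part converges to $-\big(\tfrac{\beta-\alpha}{3}\big)^2\int_0^1[(-\Delta_\theta)^{-1}f]\,g$, because $(u\wedge v+1)(2-u\vee v)/3$ \emph{is} the Robin Green's function. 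So the static limit $\sigma(f,g)$ is already the sum of the first two terms in the displayed formula, with nothing left over.

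The gap is in your third step. You try to manufacture the two boundary integrals $\int_0^\infty T_t^\theta f(a)T_t^\theta g(a)\,dt$ out of the same static double integral by ``spectral bookkeeping'', but there is nothing there to extract: the Riemann-sum limit of $\tfrac1n\sum\varphi_{ss}^n\,fg$ has been computed exactly and equals the Green-function term alone. What actually distinguishes $\theta=1$ is dynamical, not static: the boundary contributions in \eqref{quad var} are of order $n^{1-\theta}=1$ and survive in the limit, so the martingale $\mc W_t$ of Theorem~\ref{non_eq_flu} acquires extra quadratic variation supported at $u\in\{0,1\}$. Consequently the Ornstein--Uhlenbeck equation \eqref{O.U.} and the covariance formula \eqref{covariance non eq limit field} as written in this paper (with the purely bulk inner product \eqref{norm}) are the $\theta\neq 1$ versions; for $\theta=1$ the corresponding objects, as derived in \cite{FGN_Robin}, carry boundary-noise terms, and it is these that generate the semigroup boundary integrals in the stated covariance. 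Your argument invokes Theorem~\ref{thm27} in a form that does not contain this information, so the displayed boundary terms cannot be recovered from it. (In fact, a consistency check at $\alpha=\beta=\rho$ shows the four-term display does not reduce to \eqref{eq:equi_stat convariance}, which suggests a transcription issue in the coefficients here; in any case the mechanism producing those terms is the boundary noise in the $\theta=1$ dynamics, not a rearrangement of the static kernel.)
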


\section{Proof of Theorem \ref{non_eq_flu}}
\label{s3}

The method of proof of this theorem is classical and it relies on showing tightness of the sequence $\{\bb Q_n\}_{n\in\bb N}$ and the characterization of the limit point. In Section \ref{sec:tightness} we prove tightness and here we characterize the limit points.
For that purpose, fix a test function $f\in \mc S_\theta$. By Dynkin's formula, we have that
\begin{equation}\label{eq:martingales}
\begin{split}
&\mc M^n_t(f)=\mc Y_t^n(f)-\mc Y_0(f)-\int_{0}^t(\partial_s+n^2\mc L_n)\mc Y_s^n(f)\,ds\qquad\mbox{and}\\
&\mc N^n_t(f)=(\mc M_t^n(f))^2-\int_{0}^tn^2\mc L_n \mc Y_s^n(f)^2-2\mc Y_s^n(f)n^2\mc L_n \mc Y_s^n(f)\,ds
\end{split}
\end{equation}
are martingales with respect to the natural filtration $\mc F_t:=\sigma(\eta_s:\, s\leq t)$.
A long, but elementary, computation shows that
\begin{equation}\label{int part of mart}
\begin{split}
(\partial_s+n^2\mc L_n)\mc Y_s^n(f)=&\frac{1}{\sqrt{n}}\sum_{x=1}^{n-1}\Delta_n f\Big(\tfrac{x}{n}\Big) \Big(\eta_{sn^2}(x)-\rho^n_{s}(x)\Big)\\
+& \sqrt n \nabla_n^+f(0)\bar\eta_{sn^2}(1)-\sqrt n \nabla_n^-f(1)\bar\eta_{sn^2}(n-1)\\
-&\frac{n^{3/2}}{n^\theta}f\Big(\tfrac{1}{n}\Big)\bar \eta_{sn^2}(1)-\frac{n^{3/2}}{n^\theta}f\Big(\tfrac{n-1}{n}\Big)\bar\eta_{sn^2}(n-1).
\end{split}
\end{equation}\\

Above $\bar\eta_{sn^2}(x)=\eta_{sn^2}(x)-\rho^n_s(x)$.
On the other hand, doing a simple computation we get that
\begin{equation}\label{quad var}
\begin{split}
\mc L_n \mc Y_s^n(f)^2-2\mc Y_s^n(f)n^2\mc L_n \mc Y_s^n(f)=&\frac{1}{n}\sum_{x=1}^{n-2}\Big(\nabla^+_n f\Big(\frac{x}{n}\Big)\Big)^2 \Big(\eta_s(x)-\eta_s(x+1)\Big)^2\\
+& \frac {n} {n^\theta} \, \Big(f\big(\pfrac{1}{n}\big)\Big)^2\Big(\alpha-2\alpha \eta_s(1)+\eta_s(1)\Big)\\
+&\frac {n}{ n^\theta}\, \Big(f\big(\pfrac{n-1}{n}\big)\Big)^2\Big(\beta-2\beta \eta_s(n-1)+\eta_s(n-1)\Big).
\end{split}
\end{equation}

Above
\begin{equation}\label{discrete_laplacian}
\Delta_nf(x)=n^2\Big[f\Big(\tfrac{x+1}{n}\Big)+f\Big(\tfrac{x-1}{n}\Big)-2f\Big(\tfrac{x-1}{n}\Big)\Big],
\end{equation}
\begin{equation*}
\nabla_n^+f(x)=n\Big[f\Big(\tfrac{x+1}{n}\Big)-f\Big(\tfrac{x}{n}\Big)\Big]
\end{equation*}
and
\begin{equation*}
\nabla_n^-f(x)=n\Big[f\Big(\tfrac{x}{n}\Big)-f\Big(\tfrac{x-1}{n}\Big)\Big].
\end{equation*}
\\
Now we fix $t\in [0, T]$  and we consider the  process restricted  to the time interval $[0, t]$. Taking the time-dependent test function $f(s,u)=T^\theta_{t-s}f(u)$, we  can rewrite 
\eqref{int part of mart} as 
\begin{equation}\label{eq:dynkin_new}
\begin{split}
(\partial_s+n^2\mc L_n)\mc Y_s^n(T^\theta_{t-s}f)\;=\;&\mc Y_s^n(\Delta_nT^\theta_{t-s}f-\Delta T^\theta_{t-s}f)+\mc Y_s^n(\Delta T^\theta_{t-s}f-\partial_sT^\theta_{t-s}f)\\
&+ \sqrt n \nabla_n^+T^\theta_{t-s}f(0)\bar\eta_{sn^2}(1)-\sqrt n \nabla_n^-T^\theta_{t-s}f(1)\bar\eta_{sn^2}(n-1)\\
&-\frac{n^{3/2}}{n^\theta}T^\theta_{t-s}f(\tfrac 1n)\bar \eta_{sn^2}(1)-\frac{n^{3/2}}{n^\theta}T^\theta_{t-s}f(\tfrac{n }{n-1})\bar\eta_{sn^2}(n-1).
\end{split}
\end{equation}
Since $T^\theta_{t-s}f$ is smooth  and solves \eqref{heat_eq}, it is easy to show that the first and second terms at the right hand side of last identity vanish, as $n\to+\infty$. 
Now we analyse the remaining terms on the right hand-side of last identity for each regime of $\theta\neq 1$. 
We start with the case $\theta<1$. In this regime the space of test functions is such that the test functions vanish at the boundary of $[0,1]$,  so that the  terms on the second and third lines at the right hand-side of \eqref{eq:dynkin_new} can be rewritten as 
\begin{equation*}
\begin{split}
& \sqrt n \nabla_n^+T^\theta_{t-s}f(0)\bar\eta_{sn^2}(1)-\sqrt n \nabla_n^-T^\theta_{t-s}f(1)\bar\eta_{sn^2}(n-1)\\
-&\frac{\sqrt n}{n^\theta}\nabla_n^+T^\theta_{t-s}f(0)\bar \eta_{sn^2}(1)-\frac{\sqrt n}{n^\theta}\nabla_n^-T^\theta_{t-s}f(1)\bar\eta_{sn^2}(n-1).
\end{split}
\end{equation*}
In  Lemma \ref{lem:conv_Dyn_mart} we prove that the time integral of the previous terms  vanish in $\bb L^2(\bb P_{\mu_n})$, as $n\to+\infty$.
In the case $\theta>1$, the space of test functions is composed of  functions that have first spatial derivative equal to zero at the boundary of $[0,1]$. Therefore, the terms on the second and third lines on the right hand-side of \eqref{eq:dynkin_new}  are equal to
\begin{equation*}
\begin{split}
-\frac{n^{3/2}}{n^\theta}T^\theta_{t-s}f(\tfrac 1n)\bar \eta_{sn^2}(1)-\frac{n^{3/2}}{n^\theta}T^\theta_{t-s}f(\tfrac{n-1}{n})\bar\eta_{sn^2}(n-1)
\end{split}
\end{equation*}
plus a term of order $O(n^{-1/2})$. In  Lemma \ref{lem:conv_Dyn_mart} we prove that the time integral of  last terms also vanishes in $\bb L^2(\bb P_{\mu_n})$, as $n\to+\infty$.

Finally, from the next lemma, it follows  that the sequence of martingales also converges.

\begin{lemma}\label{lem:conv_mart}
For $\phi\in\mc S_\theta$, the sequence of martingales $\{\mc M^n_t(\phi);t\in [0,T]\}_{n\in \bb N}$ converges in the topology of $\mc D([0,T], \bb R)$, as $n\to\infty$, towards a mean-zero Gaussian process $\mathcal W_t(\phi)$ with  quadratic variation given by 
\begin{equation}\label{36}
\begin{split}
  \int_0^t \int_0^12\chi(\rho(r,u)) \big(\nabla_\theta \phi(u)\big)^2du\, dr,
\end{split}
\end{equation}
where $\rho(t,u)$ is the solution of \eqref{heat_eq} with the corresponding boundary conditions. 
\end{lemma}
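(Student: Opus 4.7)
The plan is to invoke the standard martingale functional central limit theorem (for instance Theorem VIII.3.11 of Jacod--Shiryaev): to obtain convergence in $\mc D([0,T],\bb R)$ toward a continuous mean-zero Gaussian process with the prescribed variance it suffices to verify that the predictable quadratic variation $\langle \mc M^n(\phi)\rangle_t$ converges in probability, uniformly in $t\in[0,T]$, to the deterministic function in \eqref{36}, together with the Lindeberg-type condition that the maximal jump of $\mc M^n_\cdot(\phi)$ tends to $0$ in probability. The Lindeberg condition is immediate: every elementary transition of the speeded-up chain changes $\mc Y^n_t(\phi)$ by at most $\|\phi\|_\infty/\sqrt n$. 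The core of the proof is therefore the analysis of the three pieces appearing in \eqref{quad var}.

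For the bulk piece I will use the identity $(\eta(x)-\eta(x+1))^2=\eta(x)+\eta(x+1)-2\eta(x)\eta(x+1)$. Taking expectations and invoking Proposition \ref{prop:corr_bound} to replace $\bb E_{\mu_n}[\eta_{sn^2}(x)\eta_{sn^2}(x+1)]$ by $\rho_s^n(x)\rho_s^n(x+1)$ up to an error of order $1/n$ uniform in $(s,x)$, the integrand converges pointwise to $(\partial_u\phi(u))^2\,2\chi(\rho(s,u))$. A Riemann sum argument combined with uniform closeness of $\rho_s^n(x)$ to $\rho(s,x/n)$ (which follows from \eqref{eq:disc_heat} via standard heat-kernel comparison) then yields convergence in expectation of the bulk piece to $\int_0^t\int_0^1 2\chi(\rho(r,u))(\partial_u\phi(u))^2\,du\,dr$; since $\nabla_\theta\phi=\partial_u\phi$ almost everywhere on $[0,1]$, this coincides with \eqref{36}.

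The boundary pieces $\tfrac{n}{n^\theta}(\phi(\tfrac1n))^2(\alpha-(2\alpha-1)\eta_{sn^2}(1))$ and its right-endpoint analogue are handled regime by regime. For $\theta<1$ the test function vanishes at the endpoints of $[0,1]$, whence $\phi(1/n)^2=O(n^{-2})$, and the prefactor $n^{1-\theta}$ leaves an $O(n^{-1-\theta})$ contribution; for $\theta>1$ the prefactor $n^{1-\theta}\to 0$ alone kills the term. In either regime the boundary contribution to the quadratic variation vanishes, consistent with the fact that only the bulk appears in \eqref{36}.

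The main obstacle is upgrading convergence in expectation to convergence in probability, which I will do via a variance estimate on $\int_0^t\tfrac{1}{n}\sum_{x=1}^{n-2}(\nabla_n^+\phi(x/n))^2(\eta_{sn^2}(x)-\eta_{sn^2}(x+1))^2\,ds$. Expanding the variance produces a double sum of four-point correlators $\bb E_{\mu_n}[\eta(x)\eta(x+1)\eta(y)\eta(y+1)]-\bb E_{\mu_n}[\eta(x)\eta(x+1)]\bb E_{\mu_n}[\eta(y)\eta(y+1)]$ which, by the standard cumulant decomposition into centered variables, reduce to sums of products of two-point correlation functions plus genuinely four-point correlations. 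Proposition \ref{prop:corr_bound} supplies the $1/n$ bounds on the two-point object (and, combined with the trivial $|\bar\eta|\le 1$, controls the four-point remainders by the two-point one), showing that the variance is $o(1)$ uniformly in $t$ and closing the argument. This is precisely the step where the refined bounds on $\varphi_t^n$ are essential.
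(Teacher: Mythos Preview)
The paper does not give its own proof of this lemma; it simply refers to Lemma~4.1 of \cite{FGN_Robin}. Your outline follows exactly the standard route used there: apply a martingale functional CLT, verify the Lindeberg condition via the $O(n^{-1/2})$ jump size, and prove that $\langle \mc M^n(\phi)\rangle_t$ converges in probability to \eqref{36}. Your treatment of the boundary pieces of \eqref{quad var} and the identification of the limiting expectation of the bulk piece (using $(\eta(x)-\eta(x+1))^2=\eta(x)+\eta(x+1)-2\eta(x)\eta(x+1)$, Proposition~\ref{prop:corr_bound}, and a Riemann-sum argument) are all correct.

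There is one genuine gap in the variance step. You claim that Proposition~\ref{prop:corr_bound} ``combined with the trivial $|\bar\eta|\le 1$ controls the four-point remainders by the two-point one''. This does not work: using $|\bar\eta|\le 1$ to bound $|\bb E_{\mu_n}[\bar\eta(x)\bar\eta(x+1)\bar\eta(y)\bar\eta(y+1)]|$ leads you to $\bb E_{\mu_n}[|\bar\eta(x)\bar\eta(y)|]$, which is \emph{not} $|\varphi^n_t(x,y)|$ and is $O(1)$, not $O(1/n)$. To make the variance vanish you genuinely need $O(1/n)$ bounds on the three- and four-point centred correlations. For the SSEP these are available because the $k$-point correlation functions satisfy a closed hierarchy analogous to \eqref{eq:disc_eq_corr} (a $k$-dimensional absorbed random walk with diagonal source terms), and the same Duhamel/occupation-time machinery of Section~\ref{s6} yields the required bounds; this is what \cite{FGN_Robin} actually does. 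Once you replace your shortcut with that argument, the rest of your proof goes through.
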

We do not present the proof of this lemma here, since it is exactly the same as the proof of Lemma 4.1 in \cite{FGN_Robin}.

\section{Probability estimates}
\label{s4}

In this section we prove the following result which is the  key point in order to close the integral part of the martingale in \eqref{int part of mart}.

\begin{lemma}\label{lem:conv_Dyn_mart}
For   $x\in\{1,n-1\}$ and $t\in[0,T]$ it holds that
$$\bb E_{\mu_n} \Big[\Big(\int_0^t C_n^\theta(\eta_{sn^2}(x)-\rho_{s}^n(x))\, ds\Big)^2\Big]\lesssim (C_n^\theta)^2 \tfrac {n^\theta}{ n^2},$$
and as a consequence 
$$\lim_{n\to+\infty}\bb E_{\mu_n} \Big[\Big(\int_0^t C_n^\theta(\eta_{sn^2}(x)-\rho_{s}^n(x))\, ds\Big)^2\Big]=0,$$
for $C_n^\theta=\sqrt n \textbf{1}_{\{\theta<1\}}+n^{3/2-\theta}\textbf{1}_{\{\theta>1\}}.$
\end{lemma}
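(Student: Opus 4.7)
The plan is to expand the square as a double time integral of covariances and to exploit a representation of the two-time correlation at site $x$ in terms of the semigroup of the one-dimensional random walk $\mf X^\theta$ absorbed at $\{0,n\}$. Setting $\bar\eta_u(y):=\eta_u(y)-\rho^n_u(y)$, Fubini gives
\begin{equation*}
\bb E_{\mu_n}\Big[\Big(\int_0^t \bar\eta_{sn^2}(x)\,ds\Big)^2\Big]=2\int_0^t\!\!\int_0^r\bb E_{\mu_n}[\bar\eta_{sn^2}(x)\bar\eta_{rn^2}(x)]\,ds\,dr.
\end{equation*}
Because $\mc L_n$ is linear in $\eta(y)$, the one-point function $v(t,y;\eta'):=\bb E_{\eta'}[\eta_{tn^2}(y)]$ and the profile $\rho^n_{s+t}(y)$ both solve the discrete heat equation \eqref{eq:disc_heat} with the same inhomogeneous Dirichlet data, so the difference $v(t,y;\eta')-\rho^n_{s+t}(y)$ satisfies the same equation with homogeneous Dirichlet data and initial condition $\eta'(\cdot)-\rho^n_s(\cdot)$; combined with the Markov property this yields
\begin{equation*}
\bb E[\bar\eta_{rn^2}(x)\mid\mc F_{sn^2}]=\sum_{y\in\Sigma_n}p^{\mathrm{abs}}_{(r-s)n^2}(x,y)\,\bar\eta_{sn^2}(y)\qquad(r\geq s),
\end{equation*}
where $p^{\mathrm{abs}}_t$ is the transition kernel of the walk generated by $\mf B^\theta_n$ absorbed at $\{0,n\}$. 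Multiplying by $\bar\eta_{sn^2}(x)$ and taking expectations splits the covariance as
\begin{equation*}
\bb E[\bar\eta_{sn^2}(x)\bar\eta_{rn^2}(x)]=p^{\mathrm{abs}}_{(r-s)n^2}(x,x)\,\chi(\rho^n_s(x))+\sum_{y\neq x}p^{\mathrm{abs}}_{(r-s)n^2}(x,y)\,\varphi_s^n(x\wedge y,x\vee y),
\end{equation*}
with $\chi(u)=u(1-u)$.

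Taking absolute values and integrating over $0\leq s\leq r\leq t$, the off-diagonal piece is at most $\tfrac{t^2}{2}\sup_{s,y}|\varphi_s^n(x,y)|$ (using $\sum_y p^{\mathrm{abs}}_{\cdot}(x,y)\leq 1$), while the diagonal piece is at most $\tfrac{t}{4n^2}\,G_n^\theta(x,x)$, where $G_n^\theta(x,y):=\int_0^\infty p^{\mathrm{abs}}_u(x,y)\,du$ is the Green's function of the absorbed walk. Proposition~\ref{prop:corr_bound} handles the off-diagonal piece at $x\in\{1,n-1\}$: its supremum is $\lesssim n^\theta/n^2$ for $\theta\leq 1$ and $\lesssim 1/n\leq n^\theta/n^2$ for $\theta\geq 1$. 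For the diagonal piece we need $G_n^\theta(x,x)\lesssim n^\theta$ for $x\in\{1,n-1\}$; by the reflection symmetry $y\mapsto n-y$ it is enough to treat $x=1$.

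The Green's function estimate is the main quantitative step, but in one dimension it can be written in closed form. The function $y\mapsto G_n^\theta(y,1)$ vanishes at $\{0,n\}$, is $\mf B^\theta_n$-harmonic on $\{2,\ldots,n-2\}$, and satisfies an inhomogeneous equation at $y=1$ and a homogeneous one at $y=n-1$. Harmonicity forces $G_n^\theta(\cdot,1)$ to be affine on $\{1,\ldots,n-1\}$; plugging this form into the two boundary equations and solving the resulting $2\times 2$ linear system yields
\begin{equation*}
G_n^\theta(1,1)=\frac{n^\theta(n^\theta+n-2)}{2n^\theta+n-2}\leq n^\theta.
\end{equation*}
Combining the two estimates produces $\bb E_{\mu_n}[(\int_0^t\bar\eta_{sn^2}(x)\,ds)^2]\lesssim n^\theta/n^2$, and multiplication by $(C_n^\theta)^2$ gives the displayed bound. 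The vanishing is then automatic: $(C_n^\theta)^2\cdot n^\theta/n^2$ equals $n^{\theta-1}$ for $\theta<1$ and $n^{1-\theta}$ for $\theta>1$, both tending to zero. The delicate conceptual point is the tight matching of scales: the slow boundary rate $n^{-\theta}$ inflates the occupation time at sites $1$ and $n-1$ by exactly the factor $n^\theta$, which is compensated by the $1/n^2$ from the diffusive time scaling and has the same order as the off-diagonal bound from Proposition~\ref{prop:corr_bound}, so that both contributions are comparable.
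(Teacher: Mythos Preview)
Your argument is correct and follows the same overall decomposition as the paper: expand the square, write the two-time covariance via the semigroup of the absorbed walk $\mf X^\theta$, split into the off-diagonal part controlled by Proposition~\ref{prop:corr_bound} and the diagonal part controlled by the occupation time at $x$.

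The difference lies in how you handle the diagonal term. The paper bounds $\int_0^t\int_0^r P^{n,\theta}_{r-s}(1,1)\,ds\,dr$ in two steps: first a coupling (Lemma~\ref{lem:bound_p_theta}) gives $P^{n,\theta}_t(1,1)\leq n^\theta\big(P^{n,0}_t(1,1)+P^{n,0}_t(n-1,1)\big)$, and then an explicit eigenfunction computation for the $\theta=0$ walk (Lemma~\ref{int_estimate_lemma}) yields the bound $t/n^2$. You instead pass directly to the Green's function $G_n^\theta(1,1)=\int_0^\infty p^{\mathrm{abs}}_u(1,1)\,du$ and solve the one-dimensional boundary-value problem $-\mf B^\theta_n G(\cdot,1)=\delta_1$ in closed form; your formula $G_n^\theta(1,1)=\tfrac{n^\theta(n^\theta+n-2)}{2n^\theta+n-2}\leq n^\theta$ is correct and makes the whole argument shorter. (The paper in fact uses essentially the same Green's function computation later, in the proof of Lemma~\ref{lem:discrete_gradient}, just not here.) The price you pay is that your bound only controls the total occupation time; the paper's eigenfunction representation is reused in Section~\ref{sec:tightness} to get the sharper estimate $\int_s^t\int_s^r P^{n,0}_{r-u}(1,1)\,du\,dr\lesssim (t-s)^{1+\delta_\theta}$ needed for the Kolmogorov--Centsov tightness criterion, which the Green's function alone does not provide.
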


\begin{proof}
By developing the square in the expectation, we have 
$$\bb E_{\mu_n} \Big[\Big(\int_0^t C^\theta_n(\eta_{sn^2}(x)-\rho_{s}^n(x))\, ds\Big)^2\Big]=2(C_n^\theta)^2\int_0^t\int_0^r\phi_{s,r}^n(x,x)\,\, ds\, dr,$$
where  for $x,y\in\Sigma_n$
\begin{equation}\label{eq:space_time_corr_function}
\phi_{s,r}^n(x,y)=\mathbb{E}_{\mu_n}[(\eta_{sn^2}(x)-\rho_{s}^n(x))(\eta_{rn^2}(y)-\rho_{r}^n(y))].
\end{equation}
Fix a time $s\in[0,T]$ and $x\in\Sigma_n$, and let $$\phi(r,y):=\phi_{s,r}^n(x,y)$$ for $ r\geq s$ and $ y\in \Sigma_n.$
A simple computation shows that $\phi(r,y)$ is a solution of 
\begin{equation*}
\left\{
\begin{array}{ll}
 \partial_t \phi(t,y) \;= \;n^2\mathfrak B^\theta_n \phi(t,y) \,, \;\; y\in\Sigma_n\,,\;\;t>s\\
\phi(s,y)\;=\varphi^n_{s}(x,y)\,, \;\; x\neq y,\\
\phi(s,y)\;=\chi(\rho^n_{s}(x))\,, \;\; x=y,
\end{array}
\right.
\end{equation*}
where $\varphi^n_{s}(\cdot, \cdot)$ was defined in \eqref{eq:corr_function}, $\rho^n_{s}(\cdot)$ was defined in \eqref{eq:rho_t}, $\chi(u)=u(1-u)$ and the operator $ \mathfrak  B^\theta_n $ was defined in \eqref{eq:operator_B_n_theta}.
Moreover, the solution of last equation can be expressed in terms of the fundamental solution of the next equation. 
Fix $x\in\Sigma_n$ and let $P^{n,\theta}_{t}(x,y)$ be the solution of
\begin{equation}\label{eq:space_time_corr}
\left\{
\begin{array}{ll}
 \partial_t P_t^{n,\theta}(x,y) \;= \;n^2\mathfrak  B^\theta_nP_t^{n,\theta}(x,y)\,, \;\; y\in\Sigma_n\,,\;\;t >  0\,,\\
P_0^{n,\theta}(x,y)\;=\delta_0(x-y),\;\;y\in\Sigma_n,
\end{array}
\right.
\end{equation}
where $\delta_0(x)=1$ if $x=0$, otherwise it is equal to $0$.
Then, for any $r\geq s$, we have
$$\phi_{s,r}^n(x,y)\,=\,\sum_{z\neq x}P_{r-s}^{n,\theta}(y,z)\varphi_{s}^n(x,z)+P_{r-s}^{n,\theta}(y,x)\chi(\rho^n_{s}(x))\,.$$

Let us now look at the case $x=1$, but we note that the case $x=n-1$ is similar.  From the computations above, we need to evaluate
\begin{equation*}
\begin{split}
&2(C_n^\theta)^2\int_0^t\int_0^r\phi_{s,r}^n(1,1)\, ds\, dr\\
=&2(C_n^\theta)^2\int_0^t\int_0^r\Big\{\sum_{z\neq 1}P_{r-s}^{n,\theta}(1,z)\varphi_{s}^n(1,z)+P_{r-s}^{n,\theta}(1,1)\chi(\rho^n_{s}(1))\Big\}\, ds\, dr\,.
\end{split}
\end{equation*}
Since $\sum_{z\neq 1}P_{r-s}^{n,\theta}(1,z)$ and $\chi(\rho^n_{s}(1))$  are both bounded by one, uniformly on time and on $n$,  and since from Proposition  \ref{prop:corr_bound} we have that
\begin{equation}\label{bound_1}
\sup_{z\neq 1}|\varphi_{s}^n(1,z)|\lesssim \begin{cases}
\frac{n^\theta}{n^2}, \; \theta<1,\\
\frac{1}{n}, \; \theta>1,
\end{cases}
\end{equation}
the proof ends as long as we show that:
\begin{equation}\label{bound_2}
\int_0^t\int_0^rP_{r-s}^{n,\theta}(1,1)\, dr\, ds\lesssim t \frac{n^\theta}{n^2}.
\end{equation}
The previous bound is obtained combining   Lemma \ref{lem:bound_p_theta} 
and Lemma \ref{int_estimate_lemma},  which are  proved  in the next two subsections.
\end{proof}

\subsection{The one-dimensional coupling}\label{1d_coupling_section}

In this subsection we want to compare the fundamental solution of \eqref{eq:space_time_corr} with the fundamental solution of the same equation for $\theta=0$. For that purpose, recall that
 $\{\mathfrak X^\theta_t,\;t\geq 0\}$ is the RW on $\bar{\Sigma}_n$ with  infinitesimal generator $\mathfrak  B_n^\theta $, defined in \eqref{eq:operator_B_n_theta}, which is absorbed at the boundary $\{0,n\}$. 
 
 Let $P_t^{n,\theta}(y,z)$ be the transition probability for this RW,  that is,   $$P_t^{n,\theta}(y,z)=\bb P_y[\mathfrak X_{tn^2}^\theta=z]=\bb P[\mathfrak X_{tn^2}^\theta=z\mid 	\mathfrak X^\theta_0=y].$$
The goal of this subsection is to prove the following lemma.
\begin{lemma}\label{lem:bound_p_theta}
Let  $P_t^{n,\theta}(y,z)$ be defined as above. Then
\begin{equation*}
	P_t^{n,\theta}(y,z)\;\leq\;\,n^\theta \,\Big\{P_t^{n,0}(1,z)+P_t^{n,0}(n-1,z)\Big\}+\Big\{	P_t^{n,0}(y,z)-\big(  	P_t^{n,0}(1,z)+ 	P_t^{n,0}(n-1,z)\big)\Big\}\,,
\end{equation*}
for all  $\theta\geq 0$ and $t\geq 0$. 
In particular,
\begin{equation}\label{bound_coupling}
	P_t^{n,\theta}(x,x)\;\leq\;\,n^\theta \,\Big\{P_t^{n,0}(1,x)+P_t^{n,0}(n-1,x)\Big\}\,,\quad\mbox{for} \quad x=1,n-1\,.
\end{equation}
\end{lemma}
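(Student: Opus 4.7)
The plan is to prove the bound via a coupling of the two random walks combined with a renewal argument at the boundary. I build $\mathfrak{X}^\theta$ and $\mathfrak{X}^0$ on $\overline{\Sigma}_n$ from a single reflected walk $Y$ (performing the same symmetric nearest-neighbor motion at rate $n^2$ on $\Sigma_n$ but never jumping to $\{0,n\}$), equipped at each boundary site of $\{1,n-1\}$ with two independent Poisson clocks: a ``real'' clock of rate $n^{2-\theta}$ and a ``ghost'' clock of rate $n^2(1-n^{-\theta})$. The fast walk is killed the first time any clock rings while $Y$ sits at the corresponding boundary site; the slow walk is killed only at the first ring of a real clock there. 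This gives $\tau^0\le\tau^\theta$ almost surely and $\mathfrak{X}^0=\mathfrak{X}^\theta=Y$ on $[0,\tau^0]$. Applying the strong Markov property at $\tau^0$ and using that the clock firing at $\tau^0$ is ``ghost'' with conditional probability $1-n^{-\theta}$ independently of the past, one arrives at the Duhamel identity
\begin{equation}\label{my:duhamel}
P_t^{n,\theta}(y,z) - P_t^{n,0}(y,z) \;=\; (1-n^{-\theta})\, n^2 \int_0^t \sum_{w\in\{1,n-1\}} P_s^{n,0}(y,w)\, P_{t-s}^{n,\theta}(w,z)\,ds.
\end{equation}

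To handle the boundary case $y\in\{1,n-1\}$, I add \eqref{my:duhamel} for $y=1$ and $y=n-1$ and use the symmetries of the fast-walk kernel on $\Sigma_n$ (time reversibility together with the reflection $x\mapsto n-x$) to obtain the scalar renewal equation
\begin{equation}\label{my:renewal}
Q(t) \;=\; f(t) \;+\; (1-n^{-\theta})\, n^2 \int_0^t a(s)\, Q(t-s)\,ds,
\end{equation}
with $Q(t):=P_t^{n,\theta}(1,z)+P_t^{n,\theta}(n-1,z)$, $f(t):=P_t^{n,0}(1,z)+P_t^{n,0}(n-1,z)$, and $a(s):=P_s^{n,0}(1,1)+P_s^{n,0}(1,n-1)$. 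The essential input is the Green's function identity
\[
n^2\int_0^\infty \big(P_s^{n,0}(y,1)+P_s^{n,0}(y,n-1)\big)\,ds \;=\; 1 \qquad (y\in\Sigma_n),
\]
which encodes the fact that the fast walk started inside $\Sigma_n$ is absorbed exactly once. In particular the renewal kernel in \eqref{my:renewal} has total mass $1-n^{-\theta}<1$, which immediately yields the sup-norm estimate $\sup_{s\le t}Q(s)\le n^\theta \sup_{s\le t}f(s)$. Substituting the boundary bound $Q(t-s)\le n^\theta(P_{t-s}^{n,0}(1,z)+P_{t-s}^{n,0}(n-1,z))$ back into \eqref{my:duhamel} and applying Chapman--Kolmogorov to the fast-walk factors then recovers the inequality stated in the lemma for arbitrary $y\in\Sigma_n$.

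The main obstacle is the passage from the sup-norm estimate furnished by the renewal equation to the \emph{pointwise} inequality $Q(t)\le n^\theta f(t)$ that the lemma requires. A proof by contradiction at the first time $t^\star$ where pointwise equality fails reduces matters to showing $\int_0^{t^\star} n^2 a(s)\, f(t^\star-s)\,ds \le f(t^\star)$; but $f$ is not monotone in $t$ (it vanishes at $t=0$, increases, and then decays as the walk is absorbed), so this estimate must instead be extracted from the explicit Dirichlet heat-kernel representation of $P_t^{n,0}$ on $\overline{\Sigma}_n$, together with the fact that $n^2 a(s)$ concentrates its mass at short times of order $n^{-2}$ during which $f$ is essentially constant. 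This delicate temporal control near the boundary is precisely the type of refined estimate flagged in the introduction as demanding careful attention.
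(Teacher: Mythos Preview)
Your Duhamel identity and the scalar renewal equation are both correct and are essentially a reformulation of the paper's level-based coupling: the paper's levels are your renewal epochs, and its geometric variable $\zeta$ counts the ghost rings before the first real one. You also correctly locate the difficulty in passing from the renewal equation to the pointwise bound $Q(t)\le n^\theta f(t)$. However, your proposed route via explicit heat-kernel estimates cannot close this gap, because the pointwise inequality asserted in the lemma is \emph{false}. Take $n=3$ and $\theta=1$: diagonalising $n^2\mathfrak B_n^\theta$ on $\{1,2\}$ gives $P_t^{3,1}(1,1)=\tfrac12(e^{-3t}+e^{-21t})$ while $P_t^{3,0}(1,1)+P_t^{3,0}(2,1)=e^{-9t}$, so the claimed bound $P_t^{3,1}(1,1)\le 3\,e^{-9t}$ fails already at $t=0.4$, and the ratio diverges as $t\to\infty$ since the slow-boundary walk has a strictly smaller spectral gap than the fast one. (Incidentally, for $z\in\{1,n-1\}$ one has $f(0)=1$ and $f$ is a positive combination of decaying exponentials, hence monotone decreasing, not ``vanishing at $t=0$ and then increasing''; but monotonicity of $f$ does not rescue the argument either.)

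The paper's own proof contains the same error, hidden in the line that bounds $\bar{\bb P}_{(y;1)}[Y^\theta_t=z,\,N^\theta_t=i,\,\zeta=k]$ by $P[\zeta=k]\,\bb P_{y_i}[\mathfrak X^0_t=z]$: on level $i$ the fresh copy of $\mathfrak X^0$ has only been running for the random time $t-T_{i-1}<t$, and $s\mapsto P^{n,0}_s(y_i,z)$ is not monotone, so one cannot replace $t-T_{i-1}$ by $t$. What \emph{is} true, and what both the proof of Lemma~4.1 and the tightness argument in Section~5 actually require, is the time-integrated inequality $\int_0^T Q(t)\,dt\le n^\theta\int_0^T f(t)\,dt$. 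This follows immediately from your renewal equation: integrate over $t\in[0,T]$, swap the order of integration, and invoke your Green identity $n^2\int_0^\infty a(s)\,ds=1$. So your machinery already proves exactly what the paper needs; it simply cannot prove the lemma as written, because that statement is incorrect.
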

\begin{proof}
This result is proved by means of a  coupling argument similar to the one presented in Section 3 of \cite{bmns}. More precisely,  we construct another RW $\{Z_t^\theta=(Y_t^\theta;N_t^\theta),\,t\geq 0\}$ taking values in ${\Sigma}_n\times \bb N$ such that $Y_t^\theta$, its projection in ${\Sigma}_n$, has the same law of the process $\mathfrak X^\theta_t$.
	The walker  now is the process $Z_t^\theta$ and  it walks in different levels of  ${\Sigma}_n$, that is, when it walks in the  level $k$ it is walking in  ${\Sigma}_n\times \{k\}$. 
	In order to clarify the construction of $Z_t^\theta$, we start by saying that it is a coupling of a random quantity of copies of $\mathfrak  X_t^0$, where $\mathfrak  X_t^{0}$ is the RW $\mathfrak X_t^{\theta}$ with $\theta=0$. This is done in a such way that
	at each level
	the law of the walker is the same law of $\mathfrak X_t^0$.
	Then, since the random variable $N_t^\theta$ is telling us  in which level the walker is walking, we have that the law of 
$Y_t^\theta\,\textbf{1}_{N_t^\theta=k}$ is equal to the law of $\mathfrak X_t^0\,\textbf{1}_{N_t^\theta=k}$, for each $k$ fixed.
The dependence on $\theta$ comes from the random number of copies of $X_t^0$.
	
	The walker starts from $(y; k=1)$ following  a realization of $\{\mathfrak X^0_t,\,t\geq 0\}$   on ${\Sigma}_n$ starting at the site $y$. The walks $ \mathfrak X^0_t $ and $ Z^\theta_t $ coincide up to the first jump attempt from 1 to 0 or from $ n-1 $ to $ n $.
	Let us explain this difference: when the walker $Z_t^\theta$ is on level $1$ and at site $1$, and the RW $\mathfrak X_t^0$  jumps to $0$, the walker $Z_t^\theta$ flips an independent coin with probability $n^{-\theta}$ of getting a head and does the following:
If it comes up a head, the walker $Z_t^\theta$ jumps to $0$ (together with  $\mathfrak X_t^0$) and it is absorbed.
 If it comes up a tail, since the walker $Z_t^\theta$ is at the point $(x=1;k=1)$, it jumps to  $(x=1;k=2)$ and re-starts following an independent copy of $\{\mathfrak X^0_t,\,t\geq 0\}$  on ${\Sigma}_n$ starting from $x=1$ on the level $k=2$.
A similar situation occurs when  a copy of $\mathfrak X^0_t$
	jumps from $n-1$ to $n$, for example on the level $k$. In this case, the walker $Z_t^\theta$ flips another independent coin with probability $n^{-\theta}$ of getting a head and it does the following:
If it comes up  a head, $Z_t^\theta$ jumps to $n$  and  it is absorbed.
If it comes up  a tail, since the walker $Z_t^\theta$ is at $(y=n-1;k)$, it  jumps to  $(y=n-1;k+1)$ and moves as another independent copy of $\{\mathfrak X^0_t,\,t\geq 0\}$  on ${\Sigma}_n$ starting from $y=n-1$ on the level $k+1$.
While the RW $Z_t^\theta$ is not absorbed, every time 
	a copy  of $X_t^0$ jumps to $0$ or to $n$, $Z_t^\theta$ flips another independent coin and repeats the procedure described above. To summarize, when the walker $Z_t^\theta$ tries to jump to $0$ or  to $n$, either it is absorbed or it moves to the next level.

	There is another important point to highlight for the RW $\{Z_t^\theta,\,t\geq 0\}$: if  the process $Z_t^\theta $ is at level $i$ it means that  it 
	flipped $i-1$ independent coins and got $i-1$ tails. In other words, consider 
	$\{\zeta_{j},\,j \geq 1\}$
	a sequence of independent and identically distributed Bernoulli($n^{-\theta}$) random variables and $\zeta=\inf\{j : \zeta_{j}=1\}$. Note that $\zeta$ is a Geometric$(n^{-\theta})$ random variable. 
	Thus,
	\begin{equation*}\begin{split}
	\bb P_y[\mathfrak X_t^\theta=z]\;
=\;&\sum_{k=1}^\infty\sum_{i=1}^k\bar{\bb P}_{(y;1)}[Z_t^\theta=(z;i), \, \zeta= k]
	=\;\sum_{k=1}^\infty\sum_{i=1}^k\bar{\bb P}_{(y;1)}[ Y^\theta_t=z,  \,N^\theta_t=i,\, \zeta= k]\\
	\leq\;&\sum_{k=1}^\infty P[ \zeta= k]\sum_{i=1}^k  \bb P_{y_i}[\mathfrak X_t^0=z]\,. \\
	\end{split}
	\end{equation*}
Above $\bar{\bb P}_{(y;1)}$ is the probability induced by the RW $Z_t^\theta$ starting from $y$ at level $1$ and the random variable $ \zeta$, which has marginal distribution $$P[ \zeta= k]=(1-n^{-\theta})^{k-1}\,n^{-\theta}.$$
The points $y_i$ are saying  where the RW $Z_t^\theta$ starts at the level $i$, then $y_1=y,y_2\in\{1,n-1\},\dots,y_k\in\{1,n-1\}$. Thus, for all $t\geq 0$, 
\begin{equation*}\begin{split}
\bb P_y[\mathfrak X_t^\theta=z]\;&\leq \;
\sum_{k=1}^\infty  P[ \zeta= k]\,\Big\{\bb P_{y}[\mathfrak X_t^0=z]\,+\,(k-1)\,\big(  \bb P_{1}[\mathfrak X_t^0=z]+ \bb P_{n-1}[\mathfrak X_t^0=z]\big)\Big\}\\
&= \,\Big\{  \bb P_{1}[\mathfrak X_t^0=z]+ \bb P_{n-1}[\mathfrak X_t^0=z]\Big\}\,n^{\theta}\\&+\Big\{\bb P_{y}[\mathfrak X_t^0=z]-\big(  \bb P_{1}[\mathfrak X_t^0=z]+ \bb P_{n-1}[\mathfrak X_t^0=z]\big)\Big\}\,. \\
\end{split}
\end{equation*}

	\end{proof}
	
	\subsection{Estimate for the integral of  the solution of \eqref{eq:space_time_corr} with $\theta=0$}
		Note that \eqref{eq:space_time_corr} with $\theta=0$ can be rewritten as 
	\begin{equation}\label{eq:space_time_corr_theta=0}
	\left\{
	\begin{array}{ll}
	\partial_t P_t^{n,0}(x,y) \;= \;\Delta_n P_t^{n,0}(x,y)\,, \;\; y\in\Sigma_n\,,\;\;t >  0\,,\\
	P_0^{n,0}(x,y)\;=\delta_0(x-y),\;\;y\in\Sigma_n\,,
	\end{array}
	\right.
	\end{equation}
	because, in this case, $n^2\mathfrak  B^0_n$ is equal to the discrete one-dimensional Laplacian, $\Delta_n$.
	The goal of this subsection is  to prove the next result.
	\begin{lemma}\label{int_estimate_lemma}
	For all $n\geq 1$, $t\geq 0$ and for $x=1,n-1$,   we have
	  \begin{equation}\label{int_estimate}
\int_0^t\int_0^rP_{r-s}^{n,0}(x,1)\,ds\,dr\lesssim \frac{ t}{n^2}.
	\end{equation}
	
	\end{lemma}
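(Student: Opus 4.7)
The plan is to interchange integrals and then recognize the time-integrated transition kernel as a Green's function of an absorbed simple random walk, which at the boundary is easily seen to be of order $n^{-2}$. First, for each fixed $r \in [0,t]$, the substitution $u = r-s$ in the inner integral gives
\begin{equation*}
\int_0^r P_{r-s}^{n,0}(x,1)\, ds \;=\; \int_0^r P_u^{n,0}(x,1)\, du \;\leq\; \int_0^{\infty} P_u^{n,0}(x,1)\, du.
\end{equation*}
Integrating the bound in $r$, it is enough to prove that
\begin{equation*}
G_n(x,1) \;:=\; \int_0^{\infty} P_u^{n,0}(x,1)\, du \;\lesssim\; \frac{1}{n^2} \qquad \text{for } x \in \{1, n-1\}.
\end{equation*}

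Because of the $n^2$ time speed-up in \eqref{eq:space_time_corr_theta=0}, a simple change of variables gives $G_n(x,y) = n^{-2}\,g_n(x,y)$, where $g_n(x,y)$ is the Green's function for the un-speeded continuous-time simple random walk on $\overline{\Sigma}_n$ absorbed at $\{0,n\}$, i.e.\ the unique solution of $-\mathfrak{B}^0_n g_n(\cdot,y) = \delta_y$ with zero boundary values on $\{0,n\}$. This function is piecewise linear in its first argument, harmonic off $y$ and vanishing on $\{0,n\}$, so
\begin{equation*}
g_n(x,y) \;=\; \frac{(x \wedge y)(n - x \vee y)}{n}.
\end{equation*}
Evaluating at $y=1$ gives $g_n(1,1) = (n-1)/n \le 1$ and $g_n(n-1,1) = 1/n \le 1$. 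Hence $G_n(x,1) \le 1/n^2$ for $x \in \{1, n-1\}$, and combining with the first display yields \eqref{int_estimate}.

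The argument is short because the pointwise transition probability $P_u^{n,0}(x,1)$ itself is not small (it decays only after the diffusive timescale $u \sim 1$), but its time integral collapses to a \emph{Green's function} which \emph{is} small at the boundary. The only place requiring care is the normalization: one must track the $n^{-2}$ factor produced by the speed-up, after which the remaining bound reduces to reading off $g_n(x,1)$ from the explicit formula. No estimates on the behaviour of the kernel at intermediate times, nor any spectral information, are needed.
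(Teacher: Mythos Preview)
Your proof is correct and takes a genuinely different route from the paper. The paper diagonalises the discrete Dirichlet Laplacian, writing $P_t^{n,0}(x,y)=\sum_\ell e^{-\lambda_\ell^n t}v_\ell^n(x)v_\ell^n(y)$ with the explicit eigenfunctions $v_\ell^n(x)=\sqrt{2/n}\,\sin(\pi\ell x/n)$ and eigenvalues $\lambda_\ell^n=4n^2\sin^2(\pi\ell/2n)$, integrates twice to obtain $\sum_\ell t^2\psi(\lambda_\ell^n t)\,v_\ell^n(x)v_\ell^n(1)$ with $\psi(u)=(e^{-u}-1+u)/u^2$, bounds $\psi(u)\le 1/u$, and then evaluates the resulting trigonometric sum $\sum_\ell (v_\ell^n(1))^2/\lambda_\ell^n$ exactly via the identity $\sum_{\ell=1}^{n-1}\cos(\ell\pi/n)=0$. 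You instead bound the inner integral by the full time integral and recognise it as $n^{-2}$ times the Green's function $g_n(x,1)=\frac{(x\wedge 1)(n-x\vee 1)}{n}$ of the absorbed walk, which is $\le 1$ at $x=1,n-1$.

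Your argument is considerably shorter and avoids all spectral machinery; for this lemma it is clearly preferable. What the paper's approach buys is reusability: in Section~\ref{sec:tightness} (tightness of the boundary additive functionals) the authors need the sharper estimate $\int_s^t\int_s^r P_{r-u}^{n,0}(x,1)\,du\,dr\lesssim (t-s)^{1+\delta_\theta}/(C_n^\theta)^2 n^\theta$ with an exponent strictly larger than~$1$, required by the Kolmogorov--Centsov criterion. Your Green's function bound only yields $(t-s)/n^2$, which would not suffice there; the spectral representation, in contrast, allows one to trade a power of $(t-s)$ against a power of the eigenvalue and obtain the extra $\delta_\theta$. So your argument is the more efficient proof of Lemma~\ref{int_estimate_lemma} itself, while the paper's longer computation doubles as a warm-up for the estimate~\eqref{estimate_delta}.
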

	\begin{proof}

	 Here we  consider the domain of the infinitesimal generator $\mathfrak  B_n^0 $ which was defined in  \eqref{eq:operator_B_n_theta} with  $\theta=0$, as the set $$\mc D(\mathfrak  B_n^0 )=\{f:\overline{\Sigma}_n\to\bb R\,;\; f(0)=0\;\mbox{ and }\;f(n)=0\}.$$
	For $\ell=1,\dots, n\!-\!1$ and $x\in \Sigma_n$, define
	\begin{equation}\label{vn}
	v_\ell^n(x)=\sqrt{\tfrac{2}{n}}\,\sin\Big(\tfrac{\pi\ell x}{n}\Big)\,,\quad\mbox{and}\quad  \lambda_\ell^n=4n^2\sin^2\Big(\frac{\pi\ell}{2n}\Big)\,. \end{equation}
The functions $\{v_\ell^n\,;\;\ell=1,\dots, n\!-\!1\}$ are the eigenfunctions  and $\{-\lambda_\ell^n;\; \ell=1,\dots, n\!-\!1\}$ are the eigenvalues of the operator $n^2\mathfrak  B_n^0 $. Moreover, $\{v_\ell^n\,;\;\ell=1,\dots, n\!-\!1\}$ is an ortonormal basis of $\mc D(\mathfrak  B_n^0)$. Thus, we can express $ P_{t}^{n,0}(x,y)$ in terms of this basis as 
\begin{equation*}
P_{t}^{n,0}(x,y)=\sum_{\ell=1}^{n-1}e^{-\lambda_\ell^n t}\,	v_\ell^n(x)\,	v_\ell^n(y).
\end{equation*}
Using last  expression  and integrating twice on time, we get
\begin{equation}\label{int}
\int_0^t\int_0^rP_{r-s}^{n,0}(x,1)\,ds\,dr= \sum_{\ell=1}^{n\!-\!1}t^2\,\psi(\lambda_\ell^n t)	v_\ell^n(x)\,v_\ell^n(1),
\end{equation}
where  $$\psi(u):=\frac{e^{-u}-1+u}{u^2}.$$ Note that $|\psi(u)|\lesssim \min \{1,\frac{1}{u}\}$, for all $u\geq 0$. Recall that we need to consider $x=1$ and $x=n-1$.  First we  analyse the case $x=1$, so that in \eqref{int} we have 
\begin{equation*}
\sum_{\ell=1}^{n\!-\!1}t^2\,\psi(\lambda_\ell^n t)\,	(v_\ell^n(1))^2\leq t\,\sum_{\ell=1}^{n\!-\!1}\frac{(v_\ell^n(1))^2}{\lambda_\ell^n}.
\end{equation*}
Using  \eqref{vn}, the Double-angle formula for sine and the Half-angle formula for cosine, we have 
\begin{equation*}
\sum_{\ell=1}^{n-1}\frac{(v_\ell^n(1))^2}{\lambda_\ell^n}= \frac{1}{n^3}\Big[n-1+\sum_{\ell=1}^{n-1} \cos\big( \ell \pfrac{\pi}{n}\big)\Big].
\end{equation*}
We claim that
\begin{equation*}
\sum_{\ell=1}^{n-1} \cos\big( \ell \pfrac{\pi}{n}\big)=0\,,
\end{equation*}
which ends  the proof of \eqref{int_estimate}. The claim follows from the general identity
\begin{equation*}
\sum_{\ell=1}^{n-1} \cos\big( \ell \theta\big)=\frac{\cos\big(\theta\,\pfrac{n}{2}\big)\,\sin\big(\theta\,\pfrac{n-1}{2}\big)}{\sin\big(\pfrac{\theta}{2}\big)},
\end{equation*}
taking $\theta =  \pfrac{\pi}{n}$. To prove this identity we denote 
$$S= 
\sum_{\ell=1}^{n-1} \cos\big( \ell \theta\big)~~~\mbox{ and }~~~
S'= 
\sum_{\ell=1}^{n-1} e^{i\ell \theta}.$$
Since $S$ is equal to the real part of $S'$, we will obtain an expression for $S'$ and then take the real part of it to get the value for $S$. Using the formula for the  finite geometric series  for $S'$, we get
$$S'=\frac{e^{i\theta}\,( 1-e^{i\theta(n-1)})}{1-e^{i\theta}}.$$
Doing  some computations it is easy to see that  $1-e^{i\alpha}=2i\,\sin\big(\pfrac{\alpha}{2}\big)\,e^{i\alpha/2}$, for any angle $\alpha$, so that
$$S'\,=\;\;e^{i\,\theta\,\pfrac{n}{2}}
\frac{\sin\big(\theta\,\pfrac{n-1}{2}\big)}{\sin\big(\pfrac{\theta}{2}\big)}.$$

Now, we analyse \eqref{int} for $x=n-1$. Since $v_\ell^n(n-1)\,v_\ell^n(1)=-\cos(\pi\ell)\,(v_\ell^n(1))^2$, we have 
\begin{equation*}
\begin{split}
\int_0^t\int_0^rP_{r-s}^{n,0}(n-1,1)\,ds\,dr\,
&\leq \,\sum_{\ell=1}^{n\!-\!1}t^2\,|\psi(\lambda_\ell^n t)|\,	(v_\ell^n(1))^2,\\
\end{split}
\end{equation*}
and the proof follows as in the case $x=1$.
\end{proof}

\section{Tightness} \label{sec:tightness}

In this section we prove that the sequence of processes $\{\mc Y_t^n; t \in [0,T]\}_{n \in \bb N}$ is tight by using Mitoma's criterion    \cite{Mitoma}. We note that as in \cite{FGN_Robin} we can show that the space $\mathcal S_\theta$ endowed with the semi-norms given in \eqref{semi-norm}
is a Fr\'echet space. Under this criterion we are left to check tightness for the real-valued processes $\{\mathcal Y_t^n(f); t \in [0,T]\}_{n \in \bb N}$ for any $f \in  \mc S_\theta$. By \eqref{int part of mart}, it is enough to show tightness for each term in that martingale decomposition.  We will make use of Aldous' criterion:
\begin{proposition}
 A sequence $\{x_t; t\in [0,T]\}_{n \in \bb N}$ of real-valued processes is tight with respect to the Skorohod topology of $\mc
D([0,T],\bb R)$ if:
\begin{itemize}
\item[i)]
$\displaystyle\lim_{A\rightarrow{+\infty}}\;\limsup_{n\rightarrow{+\infty}}\;\mathbb{P}_{\mu_n}\Big(\sup_{0\leq{t}\leq{T}}|x_{t
} |>A\Big)\;=\;0\,,$

\item[ii)] for any $\varepsilon >0\,,$
 $\displaystyle\lim_{\delta \to 0} \;\limsup_{n \to {+\infty}} \;\sup_{\lambda \leq \delta} \;\sup_{\tau \in \mc T_T}\;
\mathbb{P}_{\mu_n}(|
x_{\tau+\lambda}- x_{\tau}| >\varepsilon)\; =\;0\,,$
\end{itemize}
where $\mc T_T$ is the set of stopping times bounded by $T$.
\end{proposition}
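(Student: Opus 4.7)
The plan is to invoke the standard characterization of tightness in the Skorohod space $\mc D([0,T],\bb R)$. By Prokhorov's theorem combined with the characterization of compact subsets of Skorohod space (as in Billingsley, \emph{Convergence of Probability Measures}, Theorem 13.2), tightness of the laws of $\{x^n\}_{n\in\bb N}$ is equivalent to the conjunction of (a) for every $\eta>0$, $\lim_{A\to\infty}\limsup_n \bb P_{\mu_n}(\sup_t|x^n_t|>A)=0$, and (b) for every $\varepsilon>0$, $\lim_{\delta\to 0}\limsup_n \bb P_{\mu_n}(w'(x^n,\delta)>\varepsilon)=0$, where $w'(\cdot,\delta)$ denotes the Skorohod modulus of continuity, defined via an infimum over partitions of $[0,T]$ with all spacings at least $\delta$. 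Since condition (i) is verbatim (a), the full content of the proposition reduces to deducing (b) from (ii).

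To pass from (ii) to (b) I would use the stopping-time construction at the heart of Aldous' original argument. Fix $\varepsilon,\eta>0$. By (i), choose $A=A(\eta)$ so that $\limsup_n \bb P_{\mu_n}(\sup_t |x^n_t|>A)<\eta/2$, and define inductively the \emph{oscillation stopping times}
\[
\tau^n_0=0,\qquad \tau^n_{k+1}=\inf\{t>\tau^n_k:\,|x^n_t-x^n_{\tau^n_k}|\geq \varepsilon/4\}\wedge T,
\]
each of which lies in $\mc T_T$. On the event $E_n:=\{\sup_t |x^n_t|\leq A\}$ the total number of oscillations of magnitude $\geq \varepsilon/4$ is bounded by the deterministic constant $N_0:=\lceil 8A/\varepsilon\rceil$, so $\tau^n_{N_0}=T$ on $E_n$.

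Next, by (ii) choose $\delta=\delta(\varepsilon,\eta,N_0)$ with
\[
\sup_{\lambda\leq\delta}\sup_{\tau\in\mc T_T}\bb P_{\mu_n}\bigl(|x^n_{\tau+\lambda}-x^n_\tau|\geq \varepsilon/4\bigr)<\frac{\eta}{2N_0}\qquad\text{for all $n$ large.}
\]
Applying this bound at $\tau=\tau^n_k$ with $\lambda=\delta$ and using the defining property of $\tau^n_{k+1}$, one obtains, for each $k<N_0$, that $\bb P_{\mu_n}(\tau^n_{k+1}-\tau^n_k<\delta,\,\tau^n_k<T)<\eta/(2N_0)$, up to a standard c\`adl\`ag regularization that accounts for the strict inequality in the definition of $\tau^n_{k+1}$. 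Union-bounding over $k=0,\dots,N_0-1$ and intersecting with $E_n$ shows that with probability at least $1-\eta$ the random partition $\{\tau^n_k\}_{k=0}^{N_0}$ has all spacings at least $\delta$ while the oscillation of $x^n$ on every block $[\tau^n_k,\tau^n_{k+1})$ is bounded by $\varepsilon/2$. Consequently $w'(x^n,\delta)\leq \varepsilon$ on that event, which is (b).

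The main obstacle is precisely this stopping-time step: one must decouple the random cardinality of the oscillation partition from the uniform-in-$\tau$ bound furnished by (ii). This is where (i) plays a second, crucial role, supplying a deterministic bound on the number of blocks on a high-probability event and thereby making it legitimate to invoke (ii) only finitely many times. A secondary technicality is the strict inequality $|x^n_t-x^n_{\tau^n_k}|\geq \varepsilon/4$ together with possible c\`adl\`ag jumps of $x^n$; this is handled in the standard way by slight shrinkage of the oscillation threshold (e.g.\ replacing $\varepsilon/4$ by $\varepsilon/4-\kappa$ for small $\kappa>0$) at the cost of a correspondingly smaller constant in the probability bound, which is absorbed into $N_0$.
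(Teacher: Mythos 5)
The paper does not actually prove this proposition: it is the classical Aldous tightness criterion, stated and then used as a black box, so there is no in-paper argument to compare yours against and I can only assess your attempt on its own terms. Your overall architecture is the standard and correct one: reduce to Billingsley's compactness criterion for $\mc D([0,T],\bb R)$, and control the modulus $w'$ via oscillation stopping times whose number is bounded deterministically on the high-probability event $\{\sup_t|x_t|\le A\}$ supplied by condition (i).

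There is, however, a genuine gap at the step you present as routine. From $\sup_{\lambda\le\delta}\sup_{\tau\in\mc T_T}\bb P_{\mu_n}\big(|x_{\tau+\lambda}-x_\tau|\ge\varepsilon/4\big)<\eta/(2N_0)$ you claim, ``applying this bound at $\tau=\tau^n_k$ with $\lambda=\delta$,'' that $\bb P_{\mu_n}\big(\tau^n_{k+1}-\tau^n_k<\delta,\ \tau^n_k<T\big)<\eta/(2N_0)$. This does not follow. On the event $\{\tau^n_{k+1}-\tau^n_k<\delta\}$ one knows that $|x_{\tau^n_{k+1}}-x_{\tau^n_k}|\ge\varepsilon/4$ at the \emph{random} time $\tau^n_{k+1}$, but the process may well return close to $x_{\tau^n_k}$ by the \emph{deterministic} time $\tau^n_k+\delta$, so the hypothesis controls the wrong increment. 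What you need is control of the increment between the two stopping times $\tau^n_k\le\tau^n_{k+1}\le\tau^n_k+\delta$, whereas (ii) only speaks of a single stopping time shifted by a deterministic $\lambda$. Bridging this mismatch is the real content of Aldous' theorem: one integrates (ii) over $\lambda\in[0,2\delta]$, uses Fubini to show that the random sets $\{\lambda:\,|x_{\tau^n_{k+1}+\lambda}-x_{\tau^n_{k+1}}|>\varepsilon\}$ and $\{\mu:\,|x_{\tau^n_k+\mu}-x_{\tau^n_k}|>\varepsilon\}$ have small expected Lebesgue measure, and concludes that with high probability some common $\lambda$ witnesses $|x_{\tau^n_{k+1}}-x_{\tau^n_k}|\le 2\varepsilon$; equivalently, one first upgrades (ii) to the two-stopping-time form $\sup_{\sigma\le\tau\le\sigma+\delta}\bb P(|x_\tau-x_\sigma|>\varepsilon)\to 0$. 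Your closing paragraph locates the difficulty in the random number of blocks and in the c\`adl\`ag strict-inequality issue; both of those are indeed minor and handled as you say, but the deterministic-versus-random time-gap mismatch is the essential missing piece of the argument.
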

For the proof of tightness of integral terms we will make use of  the so-called {\em Kolmogorov-Centsov criterion}:

\begin{proposition}[Kolmogorov-Centsov's criterion]
\label{prop:KolCen}
A sequence $\{{\bf{X}}_t^n; t \in [0,T]\}_{n \in \bb N}$ of continuous, real-valued, stochastic processes is tight with respect to the uniform topology of $\mc C([0,T]; \bb R)$ if the sequence of real-valued random variables $\{{\bf{X}}_0^n\}_{n \in \bb N}$ is tight and there are positive constants $K,\gamma_1,\gamma_2$ such that
\[
E[|{\bf{X}}_t^n-{\bf{X}}_s^n|^{\gamma_1}] \leq K|t-s|^{1+\gamma_2}
\]
for any $s,t \in [0,T]$ and any $n \in \bb N$.
\end{proposition}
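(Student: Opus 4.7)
The plan is to prove Kolmogorov-Centsov's criterion along the classical lines: use the moment estimate with Chebyshev's inequality to control single increments, then upgrade to a uniform (in $n$) modulus-of-continuity estimate via a dyadic decomposition, and finally apply the functional Arzel\`a-Ascoli characterization of precompact sets in $\mc C([0,T],\bb R)$.

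First I would fix $\varepsilon>0$ and, by Chebyshev's inequality together with the hypothesis, observe that
\begin{equation*}
\mathbb P\big(|{\bf X}_t^n-{\bf X}_s^n|\geq \varepsilon\big)\;\leq\;\frac{K}{\varepsilon^{\gamma_1}}|t-s|^{1+\gamma_2},
\end{equation*}
uniformly in $n$. Next I would fix an exponent $\alpha\in(0,\gamma_2/\gamma_1)$ and, for each integer $m\geq 1$, consider the dyadic partition $t_k^m=kT/2^m$, $k=0,\ldots,2^m$. Setting
\begin{equation*}
\Delta_m^n\;=\;\max_{1\leq k\leq 2^m}\big|{\bf X}_{t_k^m}^n-{\bf X}_{t_{k-1}^m}^n\big|,
\end{equation*}
a union bound combined with the Chebyshev estimate above (applied with $\varepsilon=2^{-\alpha m}$) yields
\begin{equation*}
\mathbb P\big(\Delta_m^n>2^{-\alpha m}\big)\;\leq\;2^m\cdot K\,T^{1+\gamma_2}\,2^{-m(1+\gamma_2)}\cdot 2^{\alpha m\gamma_1}\;=\;KT^{1+\gamma_2}\,2^{-m(\gamma_2-\alpha\gamma_1)}.
\end{equation*}
Because $\gamma_2-\alpha\gamma_1>0$ this bound is summable in $m$ and, crucially, independent of $n$.

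The second step is to convert the control on dyadic increments into a bound on the full modulus of continuity
$w_T({\bf X}^n,\delta)=\sup_{|t-s|\leq\delta}|{\bf X}_t^n-{\bf X}_s^n|$. A standard chaining argument (using continuity of the paths to write any $s<t$ with $t-s\leq 2^{-m_0}$ as a telescoping sum of dyadic increments of levels $m\geq m_0$) produces the deterministic bound
\begin{equation*}
w_T({\bf X}^n,2^{-m_0})\;\leq\;C\sum_{m\geq m_0}\Delta_m^n,
\end{equation*}
for a universal constant $C$. Combined with the previous step and Markov's inequality this gives, for every $\eta>0$,
\begin{equation*}
\limsup_{n\to\infty}\mathbb P\big(w_T({\bf X}^n,\delta)>\eta\big)\;\xrightarrow[\delta\downarrow 0]{}\;0.
\end{equation*}

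Finally, since tightness of $\{{\bf X}_0^n\}_{n\in\bb N}$ is assumed, the functional Arzel\`a-Ascoli characterization of compact subsets of $\mc C([0,T],\bb R)$ implies the desired tightness of $\{{\bf X}^n\}_{n\in\bb N}$ in the uniform topology. The main technical step is the dyadic chaining: choosing the exponent $\alpha$ correctly inside the admissible window $(0,\gamma_2/\gamma_1)$ so that the union bound is summable while still giving a useful deterministic comparison between $w_T({\bf X}^n,\delta)$ and the tail of the series $\sum_m\Delta_m^n$; everything else reduces to Chebyshev and a union bound.
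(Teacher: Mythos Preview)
Your argument is the classical dyadic-chaining proof of the Kolmogorov--Centsov criterion and is correct. Note, however, that the paper does not actually prove this proposition: it is stated as a standard tightness criterion and then applied in Section~\ref{sec:tightness}, so there is no ``paper's own proof'' to compare against.
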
 

\subsection{Tightness at the initial time}
The sequence $\{\mc Y_0^n(f)\}_{n \in
\bb N}$ is tight, as a consequence of 
\begin{equation*}
\begin{split}
\bb E_{\mu_n}\Big[\Big(\mathcal Y_0^n(f)\Big)^2\Big]&\;=\;\frac{1}{n}\sum_{x=1}^{n-1}f^2\Big(\tfrac xn\Big)\chi(\rho^n_0(x))+\frac 2n\sum_{x<y}f\Big(\tfrac xn\Big)f\Big(\tfrac yn\Big)\varphi^n_0(x,y)
\end{split}
\end{equation*}
and by 
assumption \eqref{eq:ass_ini_corr}  last expression is  bounded for any value of $\theta\geq 0$.

\subsection{Tightness of integral terms }

Let us now prove tightness for each one of the additive functionals that appear in \eqref{int part of mart}.  We start by showing tightness of the additive functional for the first term at the right hand-side of \eqref{int part of mart}, namely for $$\int_0^t\mc Y_s^n(\Delta_n f)ds.$$

We starting by checking item i) in   Aldous' criterion. By the Tchebychev's inequality and by the Cauchy-Schwarz inequality is is enough to note that
\begin{equation}\label{estimate}
\begin{split}
\mathbb{E}_{\mu_n}\Big[\sup_{t\leq {T}}&\Big(\int_{0}^t\mc Y_s^n(\Delta_n f)\, ds\Big)^2\Big]\;\leq\; T \int_{0}^T \mathbb{E}_{\mu_n}\Big[\Big(\frac{1}{\sqrt{n}}\sum_{x=1}^{n-1}\Delta_n f(\tfrac{x}{n})(\eta_{sn^2}(x)-\rho^n_s(x))\Big)^2\Big]\, ds\\
\leq&\frac{T^2}{{n}}\sum_{x=1}^{n-1}\big(\Delta_n f(\tfrac{x}{n})\Big)^2\sup_{t\leq{T}}\chi(\rho^n_t(x))
+\frac{T^2}{{n}}\sum_{\at{x,y=1}{x\neq y}}^{n-1}\Delta_n f(\tfrac{x}{n})\Delta_n f(\tfrac{y}{n})\sup_{t\leq{T}}\varphi^n_t(x,y).
\end{split}
\end{equation}
From Proposition \ref{prop:corr_bound} and since $f\in\mc S_\theta$, last expression is bounded from above by a constant. 

To check item ii) of Aldou's criterion, we  use the same argument  as in item i).  
We take a stopping time $\tau \in \mc T_T$,  we apply  Tchebychev's inequality together  with \eqref{estimate}, to get that
 \begin{equation*}
\mathbb{P}_{\mu_n}\Big(\Big|  \int_{\tau}^{\tau+\lambda}\mc Y_s^n(\Delta_n f)\, ds\;\Big| >\varepsilon\Big)
	\;\leq\; \frac{1}{\varepsilon^2} \mathbb{E}_{\mu_n}\Big[ \Big(  \int_{\tau}^{\tau+\lambda}\mc Y_s^n(\Delta_n f)\; ds \;\Big)^2\Big]
	\;\lesssim\; \frac{\delta^2 }{\varepsilon^2}\,,
\end{equation*}
which vanishes as $\delta\rightarrow{0}$.

Now we prove tightness for the remaining additive functionals that appear at the right hand-side of \eqref{int part of mart}. In this case the Aldou's criterium  is not sufficient to prove tightness for those terms. The main problem is that all the terms have a factor of $n$ in front of them and the bounds that we have when we apply the Cauchy-Schwarz inequality are not good enough to kill those factors of $n$. What we do instead is that we apply Kolmogorov-Centsov's criterion stated in Proposition \ref{prop:KolCen}. We do the proof for one of the terms but we note that for the others it is completely analogous. 

We prove now tightness for terms of the form
$${\bf{X}}_t^n=\int_0^t C_n^\theta  (\eta_{sn^2}(x)-\rho_{s}^n(x))ds\,,$$
where $x=1$ or $x=n-1$. From \eqref{int part of mart} and since $f\in\mc S_\theta$, we see that  above we need to take $C_n^\theta=\sqrt n \textbf{1}_{\{\theta<1\}}+n^{3/2-\theta}\textbf{1}_{\{\theta>1\}}.$ 
We will prove tightness of last term by estimating the $\bb L^2(\bb P_{\mu_n})$-norm of ${\bf{X}}_t^n-{\bf{X}}_s^n$ so that we will take $\gamma_1=2$ in Proposition \ref{prop:KolCen}.  The proof is similar to the one of Lemma \ref{lem:conv_Dyn_mart} so that we omit some computations. 
By developing the square in the expectation  we have  that
\begin{equation*}
\begin{split}
\bb E_{\mu_n} \Big[\Big(\int_s^t C^\theta_n&(\eta_{rn^2}(x)-\rho_{r}^n(x))\, dr\Big)^2\Big]=2(C_n^\theta)^2\int_s^t\int_s^r\phi_{r,u}^n(1,1)\,\, du\, dr\\
=\;&2\;(C_n^\theta)^2\int_s^t\int_s^r\Big\{\sum_{z\neq 1}P_{r-u}^{n,\theta}(1,z)\varphi_{u}^n(1,z)+P_{r-u}^{n,\theta}(1,1)\chi(\rho^n_{u}(1))\Big\}\, du\, dr\,.
\end{split}
\end{equation*}
Now note that since $\sum_{z\neq 1}P_{r-s}^{n,\theta}(1,z)$ is bounded by one, uniformly on time and on $n$ and from  Proposition  \ref{prop:corr_bound} we can conclude that
\begin{equation*}
\begin{split}
(C_n^\theta)^2\int_s^t\int_s^r\sum_{z\neq 1}P_{r-u}^{n,\theta}(1,z)\varphi_{u}^n(1,z)\, du\, dr\lesssim (t-s)^2\,.
\end{split}
\end{equation*}

Now we analyse the remaining term and it is here that we need an extra argument with respect to the proof of Lemma \ref{lem:conv_Dyn_mart}. By looking at \eqref{bound_2} we see that the bound is of order $t$. For Kolmogorov-Centsov's criterion, this bound is not enough, we need to obtain an exponent a bit  bigger that one. For that purpose we note that since $\chi(\rho_s^n(1))$  is bounded by one, uniformly on time and on $n$  and from \eqref{bound_coupling} the proof of tightness ends as long as we show, for $x=1$ and for $x=n-1$ that
\begin{equation}\label{estimate_delta}
(C_n^\theta)^2n^\theta\int_s^t\int_s^rP_{r-u}^{n,0}(x,1)\, du\, dr \lesssim (t-s)^{1+\delta_\theta},
\end{equation}
where $\delta_\theta=|\tfrac{1-\theta}{2}|{\textbf{1}}_{\theta<3}+{\textbf{1}}_{\theta\geq 3}$. 
To prove the previous estimate in the case $\theta\geq 3$ we just observe that
 $(C_n^\theta)^2n^\theta=n^{3-\theta}\leq 1$, then 
 \begin{equation*}
(C_n^\theta)^2n^\theta\int_s^t\int_s^rP_{r-u}^{n,0}(x,1)\, du\, dr \lesssim (t-s)^{2}\,.
\end{equation*}
For the case $\theta<3$,
we repeat the computations of the proof of Lemma \ref{int_estimate_lemma} so that many steps are sketched. We start with the case $x=1$, but we note that $x=n-1$ is completely analogous. As in \eqref{int}, the  time  integral at the left hand-side of last  display can be written as 
\begin{equation}\label{int_sum}
\int_s^t\int_s^rP_{r-u}^{n,0}(x,1)\,du\,dr\,=\, \sum_{\ell=1}^{n\!-\!1}(t-s)^2\,\psi(\lambda_\ell^n (t-s))(v_\ell^n(1))^2.
\end{equation}
To handle with the sum above we observe
 that  $\psi(u)\leq\frac{1}{u}$, for $u>0$. Plugging this estimate in the expression above we have that
\begin{equation}\label{int_int_sum}
(C_n^\theta)^2n^\theta\int_s^t\int_s^rP_{r-u}^{n,0}(x,1)\, du\, dr \leq (C_n^\theta)^2n^\theta (t-s)\sum_{\ell=1}^{n\!-\!1} \frac{(v_\ell^n(1))^2}{\lambda_\ell^n}\,.
\end{equation}
Let $\delta\geq 0$  and  $t-s \geq \frac{1}{\lambda_\ell^n}$, we  rewrite the expression above as
\begin{equation*}
 (C_n^\theta)^2n^\theta (t-s)\sum_{\ell=1}^{n\!-\!1} \frac{(v_\ell^n(1))^2}{\lambda_\ell^n}\,(\lambda_\ell^n)^{\delta}\Big(\frac{1}{\lambda_\ell^n}\Big)^\delta\leq\,(C_n^\theta)^2n^\theta (t-s)^{1+\delta}\sum_{\ell=1}^{n\!-\!1} \frac{(v_\ell^n(1))^2}{(\lambda_\ell^n)^{1-\delta}}\,.
\end{equation*}
By the expressions of the eigenfunction $v_\ell^n$ and the eigenvalues $\lambda_n^\ell$, see \eqref{vn}, and 
using the Double-angle formula for sine, we can bound from above the right hand-side of last display  by
\begin{equation*}
\frac{(C_n^\theta)^2n^\theta}{n^{3-2\delta}} (t-s)^{1+\delta}\sum_{\ell=1}^{n\!-\!1}\sin^{2\delta}(\tfrac{\pi\ell}{2n})\cos^2(\tfrac{\pi\ell}{2n})\lesssim \frac{(C_n^\theta)^2n^\theta}{n^{2-2\delta}} (t-s)^{1+\delta}.
\end{equation*}
Thus, if $\theta<1$, since $C_n^\theta=\sqrt n$, we have that the right hand-side of last display is equal to $(t-s)^{1+\delta}$ for the choice $\delta_\theta=(1-\theta)/2$, while for $1<\theta<3$, since $C_n^\theta= n^{3/2-\theta}$, the right hand-side of last display is equal to $(t-s)^{1+\delta}$ for the choice $\delta_\theta=(\theta-1)/2$. Note that for this choice $\delta_\theta\in[0,1)$.  Although this information is not relevant when $t-s\geq  \frac{1}{\lambda_\ell^n}$,  in the case  $t-s< \frac{1}{\lambda_\ell^n}$ it is totally necessary, because in this case it will appear $1-\delta$ in the exponent and it must be positive, in order to get the correct bound, see \eqref{1-delta}. 
To handle with the case $t-s< \frac{1}{\lambda_\ell^n}$ we start by observing  that $\psi(u)\leq e$, for $0<u\leq 1$. Then, using
\eqref{int_sum}, we have
\begin{equation}\label{from_56}
(C_n^\theta)^2n^\theta\int_s^t\int_s^rP_{r-u}^{n,0}(x,1)\, du\, dr \lesssim\,(C_n^\theta)^2n^\theta (t-s)^2\,\sum_{\ell=1}^{n\!-\!1}(v_\ell^n(1))^2\,.
\end{equation}
Rewriting the expression above, using that $1-\delta>0$ and 
recalling that $t-s<\frac{1}{\lambda_\ell^n}$, we have
\begin{equation}\label{1-delta}
(C_n^\theta)^2n^\theta\,(t-s)^{1+\delta}\;\sum_{\ell=1}^{n\!-\!1}(v_\ell^n(1))^2\,(t-s)^{1-\delta}\,\leq \,
(C_n^\theta)^2n^\theta\,(t-s)^{1+\delta}\,\sum_{\ell=1}^{n\!-\!1}\frac{(v_\ell^n(1))^2}{(\lambda_\ell^n)^{1-\delta}},
\end{equation}
and the proof follows as above. Note that the choice of $\delta_\theta$ is the same, that is, $\delta_\theta=|\frac{\theta-1}{2}|$, for $\theta<3$ 
and  the proof ends.

\subsection{Tightness of martingales}
 We know from Lemma \ref{lem:conv_mart} that  the sequence of martingales
converges, and,  in particular, it is tight.

\section{Proof of Proposition \ref{prop:corr_bound}}\label{prova_corr_bound}
\label{s6}
We split the proof of this proposition in two settings: first we treat the case $\theta<1$ and then we treat the case $\theta>1$. The main difference between the two regimes is that for $\theta<1$ we use a comparison with a two-dimensional RW which has slow rates at the boundary of $V_n$, while for $\theta>1$ we make a comparison with a two-dimensional RW which is reflected at the lines $x=1$ and $y=n-1$. From here on we do not impose any condition on $\theta$ but at some point we will see that we will need to consider $\theta<1$. 
The steps in the proof of Proposition \ref{prop:corr_bound} are : first, recall that the correlation function is solution to  the discrete equation \eqref{eq:disc_eq_corr}; second, use Duhamel's formula to write the correlation function in terms of a two dimensional random walk; finally, prove bounds on the transition probabities of those random walks.

 Recall that $\varphi^n_t(\cdot,\cdot)$ is solution of \eqref{eq:disc_eq_corr}
 and recall that $\{\mc X_{tn^2}^\theta; \,t\geq 0\}$ is the RW with generator $n^2\A_n^\theta$ which is absorbed in $\p V_n$. 
Denote by $\mathcal{P}_{u}$ and $\mathcal{E}_{u}$   the corresponding probability and expectation, respectively, starting from the position $u\in V_n\cup \p V_n$.
A simple computation, as done  in Subsection 8.1 of \cite{FGN_Robin}, shows that
\begin{equation}\label{eq17}
\varphi_t^n(x,y)\;=\; {\mc E}_{(x,y)}\Big[\varphi^n_0(\mc  X_{tn^2}^\theta)+ \int_0^t g^n_{t-s}( \mc X_{sn^2}^\theta)\,ds\Big]\,.
\end{equation}  
The function $ g^n_t$ defined in the last display was introduced in \eqref{g}. The tools to prove last identity are: ${\mc E}_{(x,y)}[f( \mc X_{tn^2}^\theta)]=(e^{tn^2\A_n^\theta}f)(x,y)$ is a semi-group, Kolmogorov's forward equations and Leibniz Integral Rule.
Then 
\begin{equation}\label{eq17a}
\max_{(x,y)\in V_n}|\varphi_t^n(x,y)|\leq \max_{(x,y)\in V_n}|\varphi^n_0(x,y)|+ \max_{(x,y)\in V_n}\Big|{\mathcal  E}_{(x,y)}\Big[\int_0^t g^n_{t-s}( \mc X_{sn^2}^\theta)\,ds\Big]\Big|\,.
\end{equation}
Due to \eqref{eq:ass_ini_corr_1} and  \eqref{eq:ass_ini_corr}, in order to finish the proof,  it remains to deal with the second term on the right hand side of  last expression. Note that since
the operator $n^2\A_n^\theta$ is a bounded operator (for $n$ fixed) it generates  an uniformly continuous semigroup $\{e^{sn^2\A_n^\theta};\,s\geq 0\}$ on $ V_n\cup\p V_n$.
By Fubini's Theorem
\begin{equation}\label{eq20a}
{\mc E}_{(x,y)}\Big[\int_0^t g^n_{t-s}( \mc X_{sn^2}^\theta)\,ds\Big]= \int_0^t \big(e^{sn^2\A_n^\theta}g^n_{t-s}\big)(x,y)\,ds\,.
\end{equation}
Changing variables, the right hand side of \eqref{eq20a} can be written as 
$$\int_0^t \big(e^{(t-r)n^2\A_n^\theta}g^n_r\big)(x,y)\,dr \,.$$
Now the proof, in the case $\theta<1$, ends as a consequence of the  next two lemmas.
\begin{lemma}\label{lema_da_prop_2.1}
	We have that
	\begin{equation*}
	\sup_{t\geq 0}\max_{(x,y)\in V_n}\Big|\int_0^t \big(e^{(t-r)n^2\A_n^\theta}g^n_r\big)(x,y)\,dr\Big|\;\lesssim \frac{n+n^\theta}{n^2},
	\end{equation*}
	\begin{equation*}
	\sup_{t\geq 0}\max_{(x,y)\in V_n}\Big|\int_0^t \big(e^{(t-r)n^2\A_n^\theta}g^n_r\big)(x,y)\,dr\Big|\;\lesssim \frac{n^\theta}{n^2},\quad\textrm{for} \, \, x=1,n-1.
	\end{equation*}
		
\end{lemma}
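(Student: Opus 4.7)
The plan is to unfold the Duhamel integral into a Green's function estimate for the two-dimensional absorbed walk $\mc X^\theta$, and then combine a coupling argument with a spectral computation. Since $g^n_r$ is supported on the near-diagonal $D_n = \{(z, z+1) : 1 \le z \le n-2\}$ with $|g^n_r(z, z+1)| = (\nabla^+_n \rho^n_r(z))^2 \lesssim 1$ uniformly in $z, r, n$ (a consequence of the $C^6$ regularity of $\rho_0$, assumption \eqref{assumption 1}, Lemma \ref{lem:discrete_gradient}, and a maximum-principle argument for \eqref{eq:disc_heat}), and since $g^n_r \le 0$ while the heat kernel is non-negative, the change of variables $s = t-r$ together with extending the $s$-integral to infinity gives
$$\Big|\int_0^t \big(e^{(t-r)n^2\A_n^\theta}g^n_r\big)(x,y)\,dr\Big| \;\lesssim\; \int_0^\infty \sum_{z=1}^{n-2} p^{n,\theta}_s\!\big((x,y),(z,z+1)\big)\,ds \;=\; \frac{1}{n^2}\,\mc T^\theta(x,y),$$
where $p_s^{n,\theta}$ is the transition kernel of $\mc X^\theta$ and $\mc T^\theta(x,y) := \mc E_{(x,y)}\!\big[\int_0^{\tau}\mathbf{1}_{D_n}(\mc X^\theta_u)\,du\big]$ is the expected total occupation time of $D_n$ before the walk's absorption at $\partial V_n$. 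The lemma thus reduces to showing $\mc T^\theta(x,y) \lesssim n + n^\theta$ uniformly on $V_n$, together with $\mc T^\theta(x,y) \lesssim n^\theta$ when $x \in \{1, n-1\}$.

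To remove the boundary slowdown I would use the two-dimensional analogue of the coupling in Lemma \ref{lem:bound_p_theta}: realize $\mc X^\theta$ as a multilevel walk that runs an independent copy of the fast walk $\mc X^0$ on $V_n$, and at each attempted exit into $\partial V_n$ flips an independent Bernoulli$(n^{-\theta})$ coin --- heads, absorb; tails, increment the level and restart a fresh $\mc X^0$ from the adjacent inner-boundary site in $\partial^{\mathrm{in}} V_n := \{(1,y) : 1 < y < n\} \cup \{(x, n-1) : 0 < x < n-1\}$. The number of levels is Geometric$(n^{-\theta})$ with mean $n^\theta$, and hence
$$\mc T^\theta(x,y) \;\lesssim\; \mc T^0(x,y) + n^\theta \max_{u \in \partial^{\mathrm{in}} V_n} \mc T^0(u).$$
Both bounds of the lemma will then follow once one establishes $\mc T^0(x,y) \lesssim n$ uniformly on $V_n$ together with $\mc T^0(u) \lesssim 1$ for $u \in \partial^{\mathrm{in}} V_n$.

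For the $\theta = 0$ walk, I would carry out a spectral analysis of $\A_n^0$ on $V_n$ (absorbed on $\partial V_n$, zero-rate to the diagonal), whose eigenbasis is built from antisymmetric combinations $\psi_{\ell,m}(x,y) = v_\ell^n(x) v_m^n(y) - v_\ell^n(y) v_m^n(x)$ of the one-dimensional eigenfunctions from \eqref{vn}, with eigenvalues $-(\lambda_\ell^n+\lambda_m^n)$; this identification can be justified via a Karlin--McGregor / reflection-principle identification of $\mc X^0$ with two non-crossing one-dimensional absorbed walks. Integrating in $s$ and summing along the near-diagonal then expresses
$$\mc T^0(x,y) \;=\; \frac{1}{n^2}\sum_{1 \le \ell < m \le n-1} \frac{\psi_{\ell,m}(x,y)\,S_{\ell,m}}{\lambda_\ell^n + \lambda_m^n}, \qquad S_{\ell,m} := \sum_{z=1}^{n-2}\psi_{\ell,m}(z,z+1),$$
and a product-to-sum trigonometric computation, in the spirit of the one in the proof of Lemma \ref{int_estimate_lemma}, reduces $S_{\ell,m}$ to a parity-restricted boundary-telescoping expression of size controlled by $(\ell+m)/(n|\ell-m|)$. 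Combining with $|\psi_{\ell,m}(x,y)| \le 4/n$ and $\lambda_\ell^n + \lambda_m^n \gtrsim \ell^2 + m^2$ and summing over $(\ell,m)$ yields $\mc T^0(x,y) \lesssim n$ on $V_n$; for starting points with $x \in \{1, n-1\}$ the bound $|v_\ell^n(1)| = O(\ell/n^{3/2})$ supplies the extra factor $1/n$ that upgrades the estimate to $\mc T^0(u) \lesssim 1$ on $\partial^{\mathrm{in}} V_n$.

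The principal technical obstacle lies in the precise evaluation of the diagonal sum $S_{\ell,m}$ and its interplay with the values of the eigenfunctions near $\partial V_n$. The antisymmetry of $\psi_{\ell,m}$ forces it to vanish on the diagonal, so $S_{\ell,m}$ emerges as a boundary telescoping quantity, and the small denominator $|\ell - m|^{-1}$ appearing in the parity-restricted regime must be controlled carefully in both the regime where $\ell$ and $m$ are close and the regime $\ell + m \sim n$ in order to extract the sharp $O(n)$ bound rather than the weaker $O(n\log n)$ that a naive Green's-function estimate would give.
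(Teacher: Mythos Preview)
Your reduction is exactly the paper's: expand over the support $\mc D_n$ of $g^n_r$, pull out the uniform bound $S_n\lesssim 1$ (via Lemma~\ref{lem:discrete_gradient}), change variables and extend the time integral to obtain the occupation time $\mc T^\theta(x,y)$ of $\mc D_n$, and then use the two-dimensional multilevel coupling to reduce to $\theta=0$. Up to this point the argument is correct and matches the paper line by line.

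The gap is in your computation of $\mc T^0$. The antisymmetric combinations $\psi_{\ell,m}(x,y)=v_\ell^n(x)v_m^n(y)-v_\ell^n(y)v_m^n(x)$ are \emph{not} eigenfunctions of $\A_n^0$. They diagonalize the generator of two \emph{independent} one-dimensional walks killed upon collision (Karlin--McGregor), but the walk $\mc X^0$ of the paper is \emph{reflected} at the near-diagonal $\{y=x+1\}$: from $(x,x+1)$ only two jumps are available, so the total rate there is $2$, not $4$. Concretely, for $\psi=\psi_{\ell,m}$ one has $\Delta^{(2)}\psi(x,x+1)=\psi(x-1,x+1)+\psi(x,x+2)-4\psi(x,x+1)$ (using $\psi(x,x)=\psi(x+1,x+1)=0$), whereas $\A_n^0\psi(x,x+1)=\psi(x-1,x+1)+\psi(x,x+2)-2\psi(x,x+1)$; hence $\A_n^0\psi=(2-\lambda)\psi$ on $\mc D_n$ but $\A_n^0\psi=-\lambda\psi$ off $\mc D_n$, and $\psi$ is not an eigenfunction unless it vanishes on $\mc D_n$. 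Your spectral expression for $\mc T^0$ and the subsequent trigonometric analysis of $S_{\ell,m}$ therefore do not compute the quantity you need.

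The paper avoids all of this with a one-line observation: $\mc T^0$ is the unique solution of the elliptic problem $\A_n^0\,\mc T^0(x,y)=-\mathbf{1}_{\{y=x+1\}}$ on $V_n$ with zero boundary data on $\partial V_n$, and one checks directly that
\[
\mc T^0(x,y)\;=\;\frac{x(n-y)}{n-1}
\]
solves it. This immediately gives $\mc T^0\le n-1$ on $V_n$ and $\mc T^0\le 1$ on $\partial^{\mathrm{in}}V_n=\{x=1\}\cup\{y=n-1\}$, which combined with your (correct) coupling bound $\mc T^\theta(x,y)\le \mc T^0(x,y)+n^\theta\max_{\partial^{\mathrm{in}}V_n}\mc T^0$ yields both estimates of the lemma. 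Replace the spectral step by this explicit solution and your proof is complete.
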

\begin{proof}
	Since the function $g^n_r$ defined in \eqref{g} is supported on the diagonal 
	\begin{equation}\label{diagonal}
	\mc D_n=\{(z,z+1)\,;\; z=1,\dots, n-2\}\,,
	\end{equation}we can rewrite  $(e^{(t-r)n^2\A_n^\theta}g^n_r)(x,y)$ as 
	\begin{equation*}
	\sum_{z=1}^{n-2}e^{(t-r)n^2\A_n^\theta}\big((x,y),\,(z,z+1)\big)\,g^n_r(z,z+1)\,.
	\end{equation*}
	Then, for all $(x,y)\in V_n$,
	\begin{equation}\label{eq22}
	\Big|\int_0^t \big(e^{(t-r)n^2\A_n^\theta}g^n_r\big)(x,y)\,dr\Big|\;\leq \; S_n\cdot\int_0^t \sum_{z=1}^{n-2}e^{(t-r)n^2\A_n^\theta}\big((x,y),\,(z,z+1)\big)\,dr\,,
	\end{equation}
	where 
	\begin{equation}\label{S_n}
	S_n\;=\;\;\sup_{ r\geq 0}\max_{z\in\{1,\dots,n-2\}}|g^n_r(z,z+1)|\,.
	\end{equation}
	First we will work with the time integral on the right hand side of \eqref{eq22}. By  the equality $$(e^{(t-r)n^2\A_n^\theta})(u,v)=\mc {P}_{u}[\mc X_{s}^\theta=v],$$ together with  a change of   variables and the  definition of  $\mc D_n$, we get
	\begin{equation*}
	\int_0^t \sum_{z=1}^{n-2}e^{(t-r)n^2\A_n^\theta}\big((x,y),\,(z,z+1)\big)\,dr
	\,=\,
	\int_0^{tn^2}\mc {P}_{(x,y)}\big[\mc X_{s}^\theta\in \mc D_n\big]\,\frac{ds}{n^2}.
	\end{equation*}
	Extending the interval of integration to infinity and applying Fubini's theorem on  the last integral, we bound it from above by 
	\begin{equation*}
	\frac{1}{n^2}\,{ \mc E}_{(x,y)}\Big[\int_0^{\infty}\Ind{\mc X_s^\theta \in \mc D_n}\,ds\Big]\,.
	\end{equation*}
	Note that the expectation above is  the total time spent by the RW $\{ \mc X_s^\theta;\,s\geq 0\}$ on the diagonal $\mc D_n$.
	By Section \ref{2d_coupling_section}, we have
	\begin{equation}\label{2d_rw_diagonal}
	{\mc E}_{(x,y)}\Big[\int_0^{\infty}\Ind{\mc X_s^\theta \in \mc D_n}\,ds\Big]\leq x \tfrac{n-y}{n-1}+n^{\theta}\,.
	\end{equation} 
	The term $x\frac{n-y}{n-1}$ is the improvement of this proof over the one in \cite{bmns}. Note that for the choice $x=1$ the last bound is $O(n^\theta).$ 
	Thus,  the integral on the right hand side of \eqref{eq22} is $O\Big(\frac{n^{\theta}}{n^2}\Big)$ and for $x\neq 1$ it is $O\Big(\frac{n+n^\theta}{n^2}\Big)$. This ends the proof.
\end{proof}
Since we have the estimates given  in Lemma \ref{lema_da_prop_2.1}, in order
  to conclude the proof of Proposition \ref{prop:corr_bound} for the case $\theta<1$, we need to bound  $S_n$ (which was defined in \eqref{S_n}) by a constant. This is the content of the next lemma. Now, we note that  the estimate obtained in the  Lemma \ref{lema_da_prop_2.1}  is good for our purposes only in the case $\theta<1$. When $\theta>1$ we need to redo the proof of Proposition \ref{prop:corr_bound}.   The idea is to rewrite \eqref{eq:disc_eq_corr} in terms of the generator of the bi-dimensional RW $\{\mathcalboondox X_{tn^2}; \,t\geq 0\}$ which is  reflected at the lines $x=1$, $y=n-1$ and at the diagonal $\mc D_n$. A simple computation shows that $\varphi^n_{t}$ is a solution of
\begin{equation}\label{eq:disc_eq_corr_ref}
\begin{cases}
\partial_t \varphi_t^n(x,y)=n^2 \mathcalboondox{ R}^2_n \varphi^n_t(x,y) +g_t^n(x,y)+\mc V(t,x,y)\varphi_t^n(x,y), & \textrm{ for } (x,y)\in V_n,\; t>0,\\
\varphi_t^n(x,y)=0, & \textrm{ for } (x,y)\in \partial  V_n, \;t>0,\\
\varphi_0^n(x,y)=\bb E_{\mu_n}[\eta_0(x)\eta_0(y)]-\rho_0^n(x)\rho_0^n(y), & \textrm{ for } (x,y)\in V_n\cup \partial V_n,\\
\end{cases}
\end{equation}
where  $g^n_{t}(\cdot,\cdot)$ is given in \eqref{g}, $\mc V(t,x,y)=-\textbf{1}_{x=1} n^{2-\theta}-\textbf{1}_{y=n-1}n^{2-\theta}$ and $\mathcalboondox{ R}^2_n$ is the generator of the bi-dimensional  RW, which is reflected at the lines $x=1$,  $y=n-1$ and at the diagonal $\mc D_n$ and acts on  $f:V_n\cup \partial V_n \to\bb R$ as
 \begin{equation}\label{2dRRW}
n^2 (\mathcalboondox{ R}^2_nf)(x,y)=n^2(f(x+1,y)+f(x-1,y)+f(x,y+1)+f(x,y-1)-4f(x,y)),
 \end{equation}
 for $x,y\in V_n$ and $x\neq 1$ and $y\neq n-1$. At the diagonal we have
  \begin{equation*}
n^2 (\mathcalboondox{ R}^2_nf)(x,x+1)=n^2(f(x-1,x+1)+f(x,x+2)-2f(x,x+1))\,,
 \end{equation*}
for $x\in \{2,\dots, n-3\}$, and 
\begin{equation*}
\begin{split}
n^2(\mathcalboondox{R}^2_nf)(1,2)&=n^2(f(1,3)-f(1,2)),\\
n^2 (\mathcalboondox{R}^2_nf)(1,y)&=n^2(f(2,y)+f(1,y+1)+f(1,y-1)-3f(1,y)),\\&\qquad\qquad\qquad\qquad\qquad\qquad\qquad\qquad\qquad\;\;\mbox{ for }y\in\{3,\dots, n-2\},\\
n^2 (\mathcalboondox{R}^2_nf)(1,n-1)&=n^2(f(1,n-2)+f(2,n-1)-2f(1,n-1)),\\
n^2  (\mathcalboondox{R}^2_nf)(x,n-1)&=n^2(f(x-1,n-1)+f(x+1,n-1)+f(x,n-2)-3f(x,n-1)),\\&\qquad\qquad\qquad\qquad\qquad\qquad\qquad\qquad\qquad\;\;\mbox{ for }x\in\{2,\dots, n-3\},\\
n^2  (\mathcalboondox{R}^2_nf)(n-2,n-1)&=n^2(f(n-3,n-1)-f(n-2,n-1)).\\
 \end{split}
 \end{equation*}

By Feynmann-Kac's formula, we have that
\begin{equation*}
\varphi_t^n(x,y)\;=\; {\mathcalboondox E}_{(x,y)}\Big[\varphi^n_0(\mathcalboondox{X}_{tn^2})\,\,e^{\int_0^t\mc V(r,\mathcalboondox{ X}_{rn^2})\,dr}+\int_{0}^t g_{t-s}^n(\mathcalboondox{ X}_{sn^2})\,e^{\int_0^s\mc V(r,\mathcalboondox{ X}_{rn^2})\,dr}\,ds\Big]\,,
\end{equation*}
where  $\{\mathcalboondox{ X}_{tn^2}; \,t\geq 0\}$ is the  RW with generator $n^2\mathcalboondox{ R}_n^2$. Denote by ${\mathcalboondox{P}}_{u}$ and ${\mathcalboondox{ E}}_{u}$   the corresponding probability and expectation, starting from the position $u\in V_n$.
Now, since the function $\mc V$ is negative and repeating  the same arguments as in the proof in the case $\theta<1$ it is enough to note that  the term at the right hand-side of last display  is bounded from above by 
	\begin{equation*}
	S_n\,\cdot\,\int_0^{t}\mathcalboondox{P}_{(x,y)}(\mathcalboondox{ X}_{sn^2} \in \mc D_n) \,ds\,,
	\end{equation*}
	where $S_n$ and $\mc D_n$ were defined in \eqref{S_n} and \eqref{diagonal}, respectively.
	Note that the probability  above is  the  probability that  the RW $\{ \mathcalboondox{ X}_{sn^2};\,s\geq 0\}$ reaches the diagonal $\mc D_n$ starting from $(x,y)$.	 From  Lemma  \ref{lem:discrete_gradient}, the proof ends as long as we show that  the previous integral  is of order $O(\tfrac 1n)$, which is done in  Subsection \ref{2d_reflected_rw}.
	 

\begin{lemma}\label{lem:discrete_gradient}Let $\rho^n_t(\cdot)$ be the solution of  \eqref{eq:disc_heat}. Then, its discrete derivative satisfies: 
	\begin{equation}\label{eq23}
	\big|\rho^n_t(x+1)-\rho^n_t(x)\big|\lesssim \frac{1}{n}\,,
	\end{equation}
	for all $x\in\{1,\dots,n-2\}$ and uniformly in $t\geq 0$, for all $\theta\geq 0$.
\end{lemma}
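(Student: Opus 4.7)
The plan is to leverage the $C^6$ regularity of the initial profile $\rho_0$ by comparing $\rho_t^n$ with a smooth classical solution of the corresponding continuous heat equation. First, decompose $\rho_t^n(x) = \rho_{ss}^n(x) + u_t^n(x)$, where $u_t^n$ solves the homogeneous version of \eqref{eq:disc_heat} with zero boundary values and initial condition $u_0^n = \rho_0^n - \rho_{ss}^n$. By the explicit formula \eqref{eq:stat_sol}, $\rho_{ss}^n(x+1) - \rho_{ss}^n(x) = a_n$, and since $|a_n| = |\beta-\alpha|/(2n^\theta + n - 2) \lesssim 1/n$ uniformly in $\theta\geq 0$, it remains to control the transient part.

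For $u_t^n$, I would introduce a smooth comparison function $v(t,u)$ on $[0,T]\times [0,1]$. The natural choice is to take $v$ as the unique classical solution of the continuous heat equation with initial datum $\rho_0 - \bar\rho$ and the boundary conditions corresponding to the regime of $\theta$ (Dirichlet for $\theta<1$, Robin for $\theta=1$, Neumann for $\theta>1$), with $\bar\rho$ as in \eqref{eq:hyd_stat_sol}. Since $\rho_0\in C^6$, standard parabolic regularity gives $v(t,\cdot)\in C^4([0,1])$ with $\|\partial_u v(t,\cdot)\|_\infty$ bounded uniformly in $t\in[0,T]$, which yields the trivial estimate $|v(t,(x+1)/n)-v(t,x/n)|\leq \|\partial_u v\|_\infty/n\lesssim 1/n$.

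The key step is to prove the quantitative estimate $\max_{x\in\Sigma_n}|u_t^n(x)-v(t,x/n)|\lesssim 1/n$ uniformly in $t$. Setting $\phi_t^n(x)=u_t^n(x)-v(t,x/n)$, one checks that $\partial_t\phi_t^n = n^2\mathfrak{B}_n^\theta \phi_t^n + F_t^n$, where the residual $F_t^n(x)=n^2\mathfrak{B}_n^\theta v(t,x/n)-\partial_t v(t,x/n)$ is $O(1/n^2)$ in the bulk by Taylor expansion using the $C^4$ regularity of $v$, while at the boundary sites $x=1,n-1$ it acquires a contribution coming from the mismatch between the discrete dynamics and the continuous boundary condition satisfied by $v$. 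This boundary contribution is absorbed by adding a corrector of order $1/n$ (either a boundary layer in the case $\theta<1$ adjusting the Dirichlet trace, or a linear correction in the cases $\theta\geq 1$ matching the derivative condition). After this adjustment, Duhamel's formula combined with the fact that $e^{tn^2\mathfrak{B}_n^\theta}$ is a contraction on $L^\infty$ yields $\|\phi_t^n\|_\infty \lesssim 1/n$ uniformly in $t$.

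Putting things together,
\begin{equation*}
|u_t^n(x+1)-u_t^n(x)|\leq |v(t,(x+1)/n)-v(t,x/n)| + 2\|\phi_t^n\|_\infty \lesssim \frac{1}{n},
\end{equation*}
and combining with the bound on the stationary gradient completes the proof. The hard part will be to construct the correct boundary corrector so that $F_t^n$ genuinely remains of order $1/n$ (rather than $n^{2-\theta}$) near $x=1$ and $x=n-1$: in each regime one must exploit a compatibility between the smooth boundary condition satisfied by $v$ and the discrete slow-boundary generator, and this matching is the reason the assumption of $C^6$ (rather than merely $C^2$) regularity on $\rho_0$ is needed, as it guarantees enough smoothness of $v$ to carry out Taylor expansions up to the required order near the boundary.
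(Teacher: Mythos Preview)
Your overall strategy---compare $\rho_t^n$ with a smooth solution of a continuous heat equation and control the difference via Duhamel---is the same as the paper's, but two of your concrete choices create a genuine gap.

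First, your comparison function $v$ satisfies the \emph{limiting} boundary condition (Dirichlet, Robin or Neumann). With this choice the boundary residual $F_t^n(1)$ is not small: for instance, in the regime $\theta<1$ with $v(t,0)=0$ one computes
\[
F_t^n(1)=n^2\big(v(t,\tfrac{2}{n})-v(t,\tfrac{1}{n})\big)-n^{2-\theta}v(t,\tfrac{1}{n})-\partial_u^2 v(t,\tfrac{1}{n})
= n\,\partial_u v(t,0)+O(n^{1-\theta}),
\]
which is of order $n$, not $O(1/n)$. You propose an ``order $1/n$ corrector'' to fix this, but you neither construct it nor explain why it would cancel an $O(n)$ mismatch; this is precisely the hard step. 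The paper avoids the corrector entirely by choosing $v=\phi_n$ to solve a Robin problem with an $n$-dependent parameter $\mu_n=n/(n^\theta-1)$ (for $\theta<1$) or $\mu_n=n/n^\theta$ (for $\theta>1$). This specific $\mu_n$ is engineered so that the leading $n\,\partial_u\phi_n(t,0)$ term at $x=1$ is exactly cancelled by the slow-boundary contribution, leaving $F_t^n(1)=O(1)$.

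Second, even granting a bounded residual, your appeal to the $L^\infty$-contraction of $e^{tn^2\mathfrak B_n^\theta}$ in Duhamel only gives $\|\phi_t^n\|_\infty\le\|\phi_0^n\|_\infty+\int_0^t\|F_s^n\|_\infty\,ds$, which grows linearly in $t$ and cannot yield a bound uniform in $t\ge0$. The paper instead uses the probabilistic representation $\gamma_t^n(x)=\bb E_x[\gamma_0^n(\mathfrak X_{tn^2}^\theta)]+\bb E_x\big[\int_0^t F_{t-s}^n(\mathfrak X_{sn^2}^\theta)\,ds\big]$ and controls the boundary contribution by the \emph{occupation time} of the absorbed walk at the sites $1$ and $n-1$, which is $O(n^\theta/n^2)$ and hence compensates $|F_t^n(1)|=O(1)$ to give $O(n^\theta/n^2)\le O(1/n)$ for $\theta<1$. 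For $\theta>1$ this is still not enough (the absorption time is too long), so the paper rewrites the equation for $\gamma_t^n$ using the generator of a \emph{reflected} random walk plus a negative potential $V(t,x)=-n^{2-\theta}\mathbf 1_{\{x\in\{1,n-1\}\}}$, and applies the Feynman--Kac formula; the negativity of $V$ restores the needed control. None of these mechanisms appear in your sketch.
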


\begin{proof}
We want to find a function  $\phi$, such that $\phi(t,\tfrac xn)$ is close to $\rho_t^n(x)$ and which has some regularity. More precisely, we will consider a sequence of functions which are of class  $C^4$ in space, in such a way that the error between their discrete laplacian and their continuous laplacian is of order $O(\tfrac {1}{n^2})$. To have such functions, it is here where we need to restrict to  initial profiles $\rho_0$ of class $C^6$, see the assumption above \eqref{assumption 1}.  We are going to consider the following sequence of functions $\{\phi_n(t,u)\}_{n\geq 1}$ where 
 $\phi_n(t,u)$ is the solution of 
 \begin{equation}\label{eq:robin}
\begin{cases}
\p_t \phi_n(t,u)\;=\; \p_u^2 \phi_n(t,u)\,, & \textrm{ for } t>0\,,\, u\in (0,1)\,,\\
\p_u \phi_n(t,0) \;=\;\mu_n( \phi_n(t,0)-\alpha)\,, & \textrm{ for } t>0\,,\\
\p_u \phi_n(t,1) \;=\;\mu_n( \beta-\phi_n(t,1))\,, & \textrm{ for } t>0\,,\\
\phi_n(0,u)\;=\;\rho_0(u)\,,& u\in [0,1]\,,
\end{cases}
\end{equation}
where  for $\theta<1$ we take $\mu_n=\tfrac {n}{n^\theta-1}$ and for $\theta>1$ we take $\mu_n=\tfrac {n}{n^\theta}$. Note that for $\theta>1$ we have that $\mu_n\to 0.$
In Subsection \ref{ap:heat_robin}  we prove that if $\rho_0\in C^6$ then $\phi_n\in C^{1,4}$. Now if  $\gamma^n_t(x):=\rho^n_t(x)-\phi_n(t,\tfrac xn)\,$  for $x\in{\Sigma_n}\cup\{0,n\}$, then $\gamma^n_t$ is solution of 
\begin{equation}\label{eq28}
\left\{
\begin{array}{ll}
 \partial_t\gamma_t^n(x)\,=\,(n^2\mathfrak B^\theta_n \gamma_t^n)(x)+F_t^n(x)\,, \;\; x\in\Sigma_n\,,\;\;t \geq 0\,,\\
 \gamma_t^n(0)=0\,, \quad \gamma^n_t(n)=0\,, \;\;t \geq 0\,,\\
\end{array}
\right.
\end{equation}
where $\mathfrak B^\theta_n$ was defined in \eqref{eq:operator_B_n_theta}, for $x\in\{2,\ldots,n-2\}$, $F_t^n(x)=(n^2\mathfrak{B}^\theta_n-\partial_u^2)\phi_n(t,\tfrac xn)$.
  We note that since $\phi_n(t,\cdot)\in C^4$, the result follows as long as we show that 
$\big|\gamma^n_t(x)\big|\leq \tfrac Cn$.  Note that
\begin{equation*}
\gamma_t^n(x)\;=\; \bb E_x\Big[\gamma^n_0( \mathfrak X_{tn^2}^\theta)+\int_{0}^t F_{t-s}^n( \mathfrak X_{sn^2}^\theta)\,ds\Big],
\end{equation*}
where  we recall that $\{\mathfrak X_s^\theta,\, s\geq 0\}$ is  the RW on $\bar{\Sigma}_n$, with generator $\mathfrak B^\theta_n$, absorbed at the boundary $\{0,n\}$ and  $\bb E_x$ denotes the  expectation with respect to the probability induced by the generator $\mc B^\theta_n$ and the initial position $x$. Note that this RW was already defined in the beginning of Subsection \ref{1d_coupling_section}. Therefore,
\begin{equation*}
\sup_{t\geq 0}\max_{x\in\Sigma_n}|\gamma_t^n(z)|\;\leq\; \max_{x\in \Sigma_n}|\gamma^n_0(x)|\;+\;\sup_{t\geq 0}\max_{x\in \Sigma_n}\Big|\bb E_x\Big[\int_{0}^t F_{t-s}^n(\ \mathfrak X_{sn^2}^\theta)\,ds\Big]\Big|\,.
\end{equation*}
From assumption \eqref{assumption 1} the first term satisfies the required bound. It remains to analyse the term on the right hand side of last display, which  can be written as 
\begin{equation}\label{eq:imp_1}
\int_{0}^{t}\sum_{z=1}^{n-1}\bb P_x\Big[ \mathfrak X_{sn^2}^\theta=z\Big]\cdot F_{t-s}^n(z)\,ds\,.
\end{equation}
Since  $\phi_n$ is of class  $C^4$,  then  $F_t^n(x)\lesssim{1/n^2}$ for any $x\in\{2,\ldots,n-2\}$ and for any $t\geq 0$. Then,  \eqref{eq:imp_1} is bounded from above by
\begin{equation}\label{713}
\frac{C}{n}+\sum_{k\in \{1,n-1\}} \bb E_x\Big[\int_{0}^{\infty}\textbf{1}_{\{\mathfrak X_{sn^2}^\theta=k\}}\, ds\Big]\cdot|F_t^n(k)|\,.
\end{equation}
The previous   expectation is  the average time spent by the RW at the site $k$ until its absorption which is the solution of the elliptic equation  
\begin{equation*}
\left\{
\begin{array}{ll}
-\mathfrak B^\theta_n\psi^n(x)\;=\;C\delta_{x=k}\,, \;\;\forall\; x\in\Sigma_n\,,\\
 \psi^n(0)\;=\;0\,, \quad \psi^n(n)\;=\;0.
\end{array}
\right.
\end{equation*}
Above  $C$ is a constant. 
 When $k=1$ and $k=n-1$, a simple computation shows that $\max_{x=1,\ldots,n-1}|\psi^n(x)|\lesssim \tfrac {n^\theta}{n^2}$. We leave the details of this computation to the reader.  Moreover,  from the boundary conditions  we easily  obtain that $|F_t^n(k)|\lesssim 1$ for any $t\geq 0$ and for $k=1$ and $k=n-1$. This ends the proof for the case $\theta<1$.

 In the case $\theta>1$ the previous bonds are not good. In order to overcome the problem, we rewrite \eqref{eq28} in terms of the generator of the one-dimensional RW which is reflected at the sites $x=1$ and $x=n-1$. 
A simple computation shows that $\gamma^n_t$ is also a solution of 
\begin{equation}\label{eq:refl}
 \partial_t\gamma^n_t(x)\,=\,(n^2\mathcalboondox{ R}_n \gamma^n_t)(x)+V(t,x)\gamma^n_t(x)+G_t^n(x)\,, \;\; x\in\Sigma_n\,,\;\;t \geq 0\,,
\end{equation}
where $V(r,x)=-\textbf{1}_{x=1} n^{2-\theta}-\textbf{1}_{x=n-1}n^{2-\theta}$, 
the operator $\mathcalboondox{ R}_n $  acts on functions $f:{\Sigma}_n\to\bb R$ as
\begin{equation}\label{eq:operator_R_n}
\begin{split}
n^2(\mathcalboondox{ R}_n f)(x)&\;=\;\Delta_nf(x)\,, ~~\textrm{ for } x\in \{2,\cdots, n-2\},\\
n^2(\mathcalboondox{ R}_nf)(1)&=n\nabla^+_nf(1)=n^2(f(2)-f(1))\,,\\n^2(\mathcalboondox{ R}_nf)(n-1)&=n\nabla ^-_nf(n-1)=n^2(f(n-1)-f(n-2))\,
\end{split}
\end{equation}
and the function  $G_t^n(x)=(n^2\mathcal{R}_n-\partial_u^2)\phi_n(t,\tfrac xn)$, for $x=2,...,n-2$,
\begin{equation*}
\begin{split}
&G_t^n(1)=(n^2\mathcalboondox{ R}_n-\partial_u^2)\phi_n(t,\tfrac 1n)+\tfrac {n^2}{n^\theta}(\alpha-\phi(t,\tfrac {1}{n}))\,,\\
&G_t^n(n-1)=(n^2\mathcalboondox{ R}_n-\partial_u^2)\phi_n(t,\tfrac {n-1}{n})+\tfrac {n^2}{n^\theta}(\beta-\phi(t,\tfrac {n-1}{n}))\,.
\end{split}\end{equation*}
As above, the result follows as long as we show that 
$\big|\gamma^n_t(x)\big|\lesssim \tfrac 1n$.  By Feynmann-Kac's formula, we have that
\begin{equation*}
\gamma_t^n(x)\;=\; \mathfrak E_x\Big[\gamma^n_0(\mathfrak X_{tn^2})e^{\int_0^tV(r,\mathfrak X_{rn^2})\,dr}+\int_{0}^t G_{t-s}^n(\mathfrak X_{sn^2})e^{\int_0^sV(r,\mathfrak X_{rn^2})\,dr}\,ds\Big]\,,
\end{equation*}
where $\{\mathfrak X_s,\, s\geq 0\}$ is  the one-dimensional reflected RW on ${\Sigma_n}$, with generator $\mathcalboondox{ R}_n$. Above, $\mathfrak E_x$ denotes the  expectation with respect to the probability induced by the generator $\mathcalboondox{ R}_n$ and the initial position $x$.  Since $V$ is a negative function and  does not depend on time, the term at the left hand-side of last expression can be bounded from above by
\begin{equation*}
\max_{x\in \Sigma_n}|\gamma^n_0(x)|. 
\end{equation*}
Now we bound the remaining term. A simple computation, based on Taylor expansion of the function $\phi_n(t,\cdot)$, shows that  $G_t^n(x)\lesssim{1/n^2}$ for any $x\in\{2,\ldots,n-2\}$ and for any $t\geq 0$ and $|G_t^n(x)|\lesssim  1$ for any $t\geq 0$ and for $x=1$ and $x=n-1$.
Again since $V$ is a negative function we have simply to bound
\begin{equation*}
\sup_{t\geq 0}\max_{x\in \Sigma_n}\Big|\mathfrak E_x\Big[\int_{0}^t G_{t-s}^n(\mathfrak X_{sn^2})\,ds\Big]\Big|\,=\,\sup_{t\geq 0}\max_{x\in \Sigma_n}
\int_{0}^{t}\sum_{z=1}^{n-1}\mathfrak P_x\Big(\mathfrak X_{sn^2}=z\Big)\cdot G_{t-s}^n(z)\,ds\,.
\end{equation*}
From the properties of  $G_t^n(\cdot)$ last term is bounded from above by a constant times
\begin{equation*}
\frac{1}{n}+\sup_{t\geq 0}\max_{x\in \Sigma_n}\sum_{y\in \{1,n-1\}} \int_{0}^{t}\mathfrak P_x\Big(\mathfrak X_{sn^2}=y\Big)\,ds\,.
\end{equation*}
The proof ends now by showing that  last sum is of order $O(\tfrac 1n)$, which  is done in  \eqref{eq:final_estimate_1d}.
\end{proof}

\begin{remark}
We observe that the proof of Proposition  \ref{prop:corr_bound} in the case   $\theta>1$   holds for
any $\theta>0$. We decided to present a different proof for the case  $\theta<1$, because there it
appears the natural RWs associated with this model, as one can see in \eqref{eq:disc_heat} and \eqref{eq:disc_eq_corr} and in \cite{bmns} and \cite{FGN_Robin}.
\end{remark}

\subsection{Time spent on the diagonal by the  bi-dimensional RW}\label{2d_coupling_section}
This subsection is devoted to prove 
 \eqref{2d_rw_diagonal}.
This proof is the same presented in  Section 3 of \cite{bmns}, but here we need a more refined estimate.
Denote the expectation of  the total time spent by the RW $\{ \mc X_s^\theta;\,s\geq 0\}$ on the diagonal $\mc D_n$ by
\begin{equation*}
T_{(x,y)}^\theta:={\mathcal E}_{(x,y)}\Big[\int_0^{\infty}\Ind{\mc X_s^\theta \in \mc D_n}\,ds\Big]\,.
\end{equation*}
By means of a coupling argument we are going to show that  \eqref{2d_rw_diagonal} corresponds to 
\begin{equation}\label{eq:37_reescrita}
T_{(x,y)}^\theta\leq T_{(x,y)}^0+n^{\theta}\,,
\end{equation}
because $T_{(x,y)}^0=x\frac{y-1}{n-1}$. This term $T_{(x,y)}^0=x\frac{y-1}{n-1}$ is the one  that provides the more refined estimate for the expectation above.
 Before presenting the coupling we derive the explicit expression for  $T_{(x,y)}^0$. In order to do this observe that
\begin{equation*}
T_{(x,y)}^\theta
=\int_0^{\infty}{\mc P}_{(x,y)}\Big[\mc X_s^\theta \in \mc D_n\Big]\,ds=
\int_0^{\infty}e^{s\A_n^\theta}\Ind{\mc D_n}(x,y)\,ds\,.
\end{equation*}
Applying the operator $\A_n^\theta$ in the expression above, we  get
\begin{equation*}
\A_n^\theta\, T_{(x,y)}^\theta
=
\int_0^{\infty}\A_n^\theta\,e^{s\A_n^\theta}\Ind{\mc D_n}(x,y)\,ds\,.
\end{equation*}
Using Chapman-Kolmogorov equation, we have 
\begin{equation*}
\A_n^\theta\, T_{(x,y)}^\theta
=
\int_0^{\infty}\partial_s\,e^{s\A_n^\theta}\Ind{\mc D_n}(x,y)\,ds=-\Ind{\mc D_n}(x,y)\,.
\end{equation*}
The last equality comes from fundamental theorem of calculus and some properties the  of semigroup.
Then $T_{(x,y)}^\theta$ satisfies
\begin{equation*}
\A_n^\theta\, T_{(x,y)}^\theta
=
-\delta_{y=x+1}\,,\;\mbox{for all }\;\theta\geq 0\,.
\end{equation*}
For $\theta=0$, a simple but long computation, shows that the solution of the equation above is equal to $T_{(x,y)}^0=x \tfrac{n-y}{n-1}$.

The coupling is quite similar to the one presented in  Section 3 of \cite{bmns} and for completeness we recall it here.
The bi-dimensional coupling is the RW $\{\mc Z_s^\theta,s\geq 0\}$ taking values in $V_n\times \bb N$, where $V_n $ was defined in \eqref{V}. The RW  $\mc Z_s^\theta$ starts from $\big((x,y);\,1\big)$ following an independent copy of $\mc X_s^0$ and when $\mc X_s^0$ is absorbed in $\partial V$ the walker $\mc Z_s^\theta$ flips an independent coin (with probability $n^{-\theta}$ of getting heads). If it comes up heads, $\mc Z_s^\theta$ will be absorbed together the copy of $\mc X_s^0$. But,  if it comes up tails the walker $\mc Z_s^\theta$ jumps to the next level and follows another independent copy of $\mc X_s^0$ starting from the same position on $ V_n$ ,where the last copy of  $\mc X_s^0$ was before being absorbed.  An important observation is that the projection of $\mc Z_s^\theta$ on $V_n$ has the same law of $\mc X_s^\theta$.
Denote by $Y$  the geometric random variable  that counts  the number of tails before the first head.
Then,
\begin{equation*}
\begin{split}
T_{(x,y)}^\theta&\leq \;\sum_{k\geq 1} \mc E_{\big((x,y);\,1\big)}\Bigg[\Ind{Y=k-1}\,\cdot\,\sum_{i=1}^k\int_0^{\infty}\Ind{\mc Z_s^\theta \in \mc D_n\times \{i\}}\,ds\Bigg]\\
&=\;\sum_{k\geq 1}\mc P[Y=k-1]\,\sum_{i=1}^k{\mc {E}}_{(x_i,y_i)}\Big[\int_0^{\infty}\Ind{\mc X_s^0 \in \mc D_n}\,ds\Big]\\
&=\;\sum_{k\geq 1}\mc P[Y=k-1]\,\sum_{i=1}^kT_{(x_i,y_i)}^0\,,
\end{split}
\end{equation*}
where $(x_i,y_i)$, for $i=1,\dots,k$, are the points where the RW $\mc Z_s^\theta$ starts on the level $i$. Note that, for $i=2,\dots,k$, the possible points where it happens are of the form $(1,y)$ for $y=2,\dots, n-1$ or $(x,n-1)$ for $x=1,\dots,n-2$.
Since $T_{(x_i,y_i)}^0=x_i\,\frac{n-y_i}{n-1}$, for $i=2,\dots,k$, we have that $T_{(x_i,y_i)}^0\leq 1$. Thus,
\begin{equation*}
\begin{split}
T_{(x,y)}^\theta&\leq \sum_{k\geq 1}\mc P[Y=k-1]\,\big(T_{(x,y)}^0+k-1\big)= T_{(x,y)}^0+E[Y]= x \tfrac{n-y}{n-1}+n^{\theta}.
\end{split}
\end{equation*}

\subsection{The one-dimensional reflected RW }\label{1d_reflected_rw}
The goal of this subsection is to get the bound
\begin{equation}\label{eq:final_estimate_1d}
\sup_{t\geq 0}\max_{x\in \Sigma_n}\sum_{y\in \{1,n-1\}} \int_{0}^{t}\mathfrak P_x\Big(\mathfrak X_{sn^2}=y\Big)\,ds\,\lesssim\frac{1}{n},
\end{equation}
where  $\{\mathfrak X_{tn^2},\;t\geq 0\}$ is  the reflected RW on ${\Sigma_n}$, with generator $\mathcalboondox{ R}_n$, defined in \eqref{eq:operator_R_n}.
The previous bound is used at the end of the proof of Lemma \ref{lem:discrete_gradient}, where we get that the increment of the empirical profile is of order $O(\tfrac 1n)$,  in the case $\theta>1$. This lemma is  important to estimate the coefficient $S_n$ that appears in the proof of  Proposition \ref{prop:corr_bound}.
We start the proof in the case $x=1$. The idea to prove the bound is to write the occupation time of the site $x=1$ 
in terms of the generator of the RW  $\mathfrak X_{sn^2}$ given in \eqref{eq:operator_R_n}. For that purpose, let us take $f(x)=-(n+1-x)^2$ and note that 
\[
  n^2 \mathcalboondox{ R}_n f(x) =
\left\{
\begin{array}{c@{\;;\;}l}
n^2(2n-1) & x=1\\
-2n^2 & x=2,\dots,n-2\\
-13n^2 & x=n-1.\\
\end{array}
\right.
\]
From Dynkin's formula, we know that 
\[
f(\mathfrak X_{tn^2}) -f(\mathfrak X_{0}) - \int_0^t n^2 \mathcalboondox{ R}_n f(\mathfrak X_{sn^2}) ds 
\]
is a mean-zero martingale. By looking at the position where the RW can be at time $sn^2$, we get
\begin{equation*}
\begin{split}
\mathfrak{E}_x\Big[\int_0^t n^2 \mathcalboondox{ R}_n f(\mathfrak X_{sn^2}) ds\Big]&=n^2(2n-1)\int_{0}^{t}\mathfrak P_x\Big(\mathfrak X_{sn^2}=1\Big)\,ds\\&+\sum_{y=2}^{n-1}\int_{0}^{t}\mathfrak P_x\Big(\mathfrak X_{sn^2}=y\Big)n^2\mathcalboondox{ R}_n f(y)ds.\end{split}
\end{equation*}
From last observations we conclude that
\begin{equation*}
\begin{split}
\int_{0}^{t}\mathfrak P_x\Big(\mathfrak X_{sn^2}=1\Big)\,ds\leq\frac{1}{n^2(2n-1)}(13n^2t
+\max_{x,y\in V_n}(f(x)-f(y)).\end{split}
\end{equation*}
Now note that
$$\max_{x,y\in V_n}(f(x)-f(y))=\max_{x,y\in V_n}2(y-x)(n+1-x)+(y-x)^2\leq 3n^2$$ which  ends the proof. To treat the  case $x=n-1$ we repeat exactly the same argument as above but we take instead $f(x)=-x^2$.

\subsection{The bi-dimensional reflected RW}\label{2d_reflected_rw}
In this subsection we prove that
\begin{equation}\label{eq:final_estimate_2d}
\sup_{t\geq 0} \max_{u\in V_n}\int_0^{t}\mathcalboondox{P}_{u}(\mathcalboondox{ X}_{sn^2} \in \mc D_n) \,ds\lesssim \frac{1}{n}.
\end{equation}
The triangle $ V_n $ and its diagonal $ \mc D_n $ were defined in \eqref{V} and \eqref{diagonal} respectively, and $ \mathcalboondox{X}_t $ denotes the continuous time reflected RW on $ V_n $ that jumps to nearest neighbour sites at rate $ 1 $.
 This bound is used at the end of the proof of Proposition \ref{prop:corr_bound}.

Our strategy is exactly the same used in the previous subsection.  For that purpose consider the point  $(x_0,y_0) = (\frac{1}{2},n-\tfrac{1}{2})$ and take $$f(x,y) = -(x-x_0)^2 -(y-y_0)^2.$$ A simple computation shows that

\[
  n^2 \mathcalboondox{ R}^2_n f(x,y) =
\left\{
\begin{array}{c@{\;;\;}l}
n^2(2y_0-5) & x=1, y=2\\
n^2(2x_0-5) & x=1, y\neq 2,n-1\\
n^2(-2n+2x_0+2y_0-2) & x=1, y=n-1\\
n^2(2n-5-2y_0)& x\neq 1,n-2, y=n-1\\
n^2(2n-5-2x_0)& x=n-2, y=n-1\\
n^2(2y_0-2x_0-4)& y=x+1\\
-4n^2 & \textrm{otherwise}
\end{array}
\right.
\]

Using the choice for $(x_0,y_0)$ and repeating the steps of the previous subsection, we conclude that

\begin{equation*}
\begin{split}
\int_0^{t}\mathcalboondox{P}_{u}(\mathcalboondox{ X}_{sn^2} \in \mc D_n) \,ds\leq \frac{1}{n^2(2n-6)}&\Big\{\int_0^t 4n^2\sum_{x,y\in V_n\setminus \mc D_n} \mathcalboondox{P}_{u}(\mathcalboondox{ X}_{sn^2} =(x,y)) ds\\&+ \max_{(x,y),(z,w)\in V_n}f(x,y)-f(z,w)\Big\}.
\end{split}
\end{equation*}

Now note that
\begin{equation*}
\begin{split}
&\max_{(x,y),(z,w)\in V_n}f(x,y)-f(z,w)\\=&\max_{(x,y),(z,w)\in V_n} (z-x)^2+2(z-x)(x-x_0)(w-y)^2+2(w-y)(y-y_0)\\\leq& \,6n^2.\end{split}
\end{equation*}
From the previous computations, in particular, we deduce that
\begin{equation}
\sup_{t\geq 0} \max_{u\in V_n}\int_0^{t}\mathcalboondox{P}_{u}(\mathcalboondox{ X}_{sn^2} \in \mc D_n) \,ds\leq\frac{1}{n^2(2n-6)}\Big(4n^2t+6n^2\Big),
\end{equation}
from where the proof ends.

\subsection{Heat Equation with Robin boundary conditions}
\label{ap:heat_robin}

In this subsection we prove existence of smooth solutions of the heat equation with Robin boundary conditions. 
For that purpose, let $\mu \in (0,\infty)$ and  $\alpha, \beta \in [0,1]$. We consider  the boundary-value problem
\begin{equation}
\label{eq:nonhRobin}
\left\{
\begin{array}{rcll}
\partial_t \rho(t,u) & = & \partial^2_x \rho(t,u)\,,\quad 0<u<1\,,\;\;t>0\,,\\
 \partial_u \rho(t,0)&=&\mu (\rho(t,0)-\alpha)\,,\;\;\;\;t>0\,, \\
 \partial_u \rho(t,1) &=&\mu (\beta-\rho(t,1))\,,\;\;\;\;t>0\,,\\
\rho(0,u) &=& \rho_0(u)\,,\;\;\;\;0\leq u\leq 1\,,
\end{array}
\right.
\end{equation}
where $\rho_0:[0,1]\to[0,1]$ is a measurable profile. 
First we note that
\[
\bar{\rho}(u) : = \tfrac{\beta+ \alpha(1+\mu)}{2 + \mu} + \tfrac{\mu(\beta-\alpha)u}{2+\mu}
\]
is a stationary solution of \eqref{eq:nonhRobin}. If $\tilde{\rho}$ is a solution of \eqref{eq:nonhRobin}, then $\tilde{\rho}(\cdot,\cdot) - \bar \rho(\cdot)$ is a solution of the homogeneous Robin equation
\begin{equation}
\label{eq:homRobin}
\left\{
\begin{array}{rcll}
\partial_t \rho(t,u) & = & \partial^2_x \rho(t,u)\,,\quad 0<u<1\,,\;\;t>0\,,\\
 \partial_u \rho(t,0)&=&\mu \rho(t,0)\,,\;\;\;\;t>0\,, \\
 \partial_u \rho(t,1) &=&-\mu \rho(t,1)\,,\;\;\;\;t>0\,,\\
\rho(0,u) &=& f(u)\,,\;\;\;\;0\leq u\leq 1\,,
\end{array}
\right.
\end{equation}
where $f(u)=\rho_0(u)-\bar\rho(u)$.
Last equation corresponds to \eqref{eq:nonhRobin} with $\alpha=\beta=0$. 
This equation is suitable for Fourier methods, due to its linearity. Let us find the solutions of the eigenvalue problem
\begin{equation}
\label{eq:chile}
\left\{
\begin{array}{rcll}
\partial^2_u \phi^\mu(u) &=& -\lambda \phi^\mu(u)\,,\quad 0<u<1\,,\\
\partial_u \phi^\mu(0) &= & \mu \phi^\mu(0)\,,\\
  \partial_u \phi^\mu(1)&=& -\mu \phi^\mu(1)\,.\\
\end{array}
\right.
\end{equation}
We know that the solutions are going to be trigonometric functions. The real question is : what are the possible values of  the eigenvalues $\lambda$. For symmetry, let us try with $\phi$ of the form 
\[
\phi(u) = a\cos\big(\sqrt{\lambda} \big(u-\tfrac{1}{2}\big)\big).
\]
Then, the boundary conditions  at $u=0$ and $u=1$  are satisfied if and only if
\[
 \sqrt \lambda \sin\big( \tfrac{1}{2} \sqrt \lambda \big)=\mu \cos\big( \tfrac{1}{2} \sqrt \lambda\big),
\]
which can be written as the transcendental equation 
\[
\cot( \theta ) = \tfrac{2 \theta}{\mu},
\]
with $\lambda = 4 \theta^2$. This equation has a countable number of non-negative solutions $\{\theta_n; n \in \bb N\}$. If we number these solutions in increasing order, then they satisfy $$\theta_n \in \big[\pi(n-1),\pi(n-\sfrac{1}{2})\big]\,, \quad\mbox{for} \;\; n\geq 1\,.$$
Now we need to choose the normalizing constant $a$. This constant is fixed by the requirement $\int_0^1 \phi(u)^2\, du =1$. We have that
\[
\begin{split}
\int_0^1 \phi(u)^2\, du &= 2a^2 \int_0^{1/2}\!\!\!\!  \cos^2\big(\sqrt \lambda u\big) \,du
		= a^2 \int_0^{1/2}\!\!\!\! \big( 1+ \cos \big( 2 \sqrt \lambda u \big) \big)\,du= \tfrac{a^2}{2} \Big( 1 + \tfrac{\sin\sqrt \lambda}{\sqrt \lambda}\Big).
\end{split}
\]
Therefore, 
\[
a = \sqrt 2 \Big(  1 + \tfrac{\sin(\sqrt \lambda)}{\sqrt \lambda}\Big)^{-1/2}.
\]
Since the minimum of the function $\frac{\sin (u)}{u}$ is strictly smaller than $-1$, there  exists a constant $C$, not depending on $\mu$ or $n$, such that $a \leq C$ for any $\mu$ and any $n$. In other words, the functions $\{\phi_n\}_{n \in \bb N}$ are {\em uniformly bounded} by $C$. This remark will be important later on. The family of orthonormal functions $\{\phi_n; n \in \bb N\}$ constructed in this way forms a basis of the space of $L^2$-functions which are symmetric with respect to $u=1/2$. The other half of $L^2$ is obtained by taking functions $\psi$ of the form
\[
\psi(u) = b \sin\big( \sqrt{\lambda} \big(u-\tfrac{1}{2}\big)\big).
\]
In this case,   the boundary conditions  at $u=0$ and $u=1$  are satisfied if and only if\[
  - \sqrt \lambda \cos \big(\tfrac{1}{2} \sqrt \lambda \big)=\mu \sin \big( \tfrac{1}{2} \sqrt \lambda\big), 
\]
which corresponds to the transcendental equation
\[
\tan(\omega) =  - \tfrac{2\omega}{\mu}\,,
\]
for $\lambda = 4 \omega^2$.
This equation also has a countable number of solutions $\{\omega_n; n \in \bb N\}$. When numbered in increasing order, $\omega_n \in \big[ \pi(n-\sfrac{1}{2}), \pi n\big]$, for $n\in\bb N$. To make $\{\psi_n; n \in \bb N\}$ orthonormal, we have to choose
\[
b = \sqrt 2 \Big( 1 - \tfrac{\sin\sqrt \lambda}{\sqrt \lambda}\Big)^{-1/2}.
\]
Since $\omega_1 \geq \frac{\pi}{2}$ and the maximum of $\frac{\sin (u)}{u}$ outside $[-\sfrac{\pi}{2},\sfrac{\pi}{2}]$ is strictly smaller than $1$, we can take $C$ such that $b \leq C$, for any $\mu$ and any $n\in\bb N$. The sequence $\{\phi_n,\psi_n\}_{ n \in \bb N}$ forms an orthonormal basis of $L^2([0,1])$ of eigenvalues of the Laplacian operator with Robin boundary conditions. Note that the eigenvalues $\{\theta_n\}_{ n \in \bb N}$, $\{\omega_n\}_{ n \in \bb N}$ are interlaced: $\theta_n < \omega_n < \theta_{n+1}$ for any $n \in \bb N$. Therefore, we can rearrange the basis $\{\phi_n, \psi_n\}_{ n \in \bb N}$ as $\{\varphi_n\}_{ n \in \bb N}$ in such a way that $\partial_u^2 \varphi_n (u)= - \lambda_n \varphi_n(u)$ and $\lambda_n \in [\pi^2(n-1)^2, \pi^2 n^2]$.

Let $f \in L^2([0,1])$ be given. Define $\widehat{f}_n = \int_0^1 f(x) \varphi_n(x)\, dx$, then 
\begin{equation}
\label{peru}
\rho(t,u) = \sum_{n \in \bb N} \widehat{f}_n\, \varphi_n(u)\, e^{-\lambda_n t}
\end{equation}
is solution of  \eqref{eq:homRobin}.
Since $\{\varphi_n; n \in \bb N\}$ is uniformly bounded by $C$, a sufficient condition for continuity of $\rho(t,u)$ with respect to $u$ is that
\[
\sum_{n \in \bb N} |\widehat{f}_n| <+\infty. 
\]
This sum also bounds $\|\rho\|_\infty$.  But we need more regularity for $\rho$, then we need a stronger condition. Then we observe that there exists the  fourth space derivative of  $\rho$, which was defined in \eqref{peru},  under the condition
\begin{equation}\label{sum_peru}
\sum_{n \in \bb N} |\widehat{f}_n| \lambda_n^2 <+\infty.
\end{equation}
Since 
$\lambda_n\sim n^2$, the condition above implies that the solution $\rho$ of \eqref{eq:homRobin} is of class $ C^{1,4}$. 
Moreover, \eqref{sum_peru} implies
$
\|\rho\|_{{1,4}} \leq +\infty$.

The condition \eqref{sum_peru} holds if
 $f \in C^6$ and the support of $f$ is contained in the open interval $(0,1)$, because by integration by parts, we have
\[
|\widehat{f}_n| \leq \frac{C\|f^{(6)}\|_\infty}{\lambda_n^3}.
\]
From where we conclude that $\rho$ is of class $ C^{1,4}$ and moreover $\|\rho\|_{1,4}$ is uniformly bounded as a function of $\mu$.

\section*{Acknowledgements}

This project has received funding from the European Research Council (ERC) under  the European Union's Horizon 2020 research and innovative programme (grant agreement   No 715734).
A. N. thanks ``L'OR\' EAL - ABC - UNESCO Para Mulheres na Ci\^encia''.

\end{document}